\newcommand{\pdr}[2]{\dfrac{\partial{#1}}{\partial{#2}}}
\newcommand{\farc}{\frac}
\setlist[itemize]{itemsep=-1mm}
\newtheorem{theorem}{Theorem}
\newtheorem{lemma}[theorem]{Lemma}
\newtheorem{lem}[theorem]{Lemma}
\newtheorem{prop}[theorem]{Proposition}
\newtheorem{proposition}[theorem]{Proposition}
\newcommand{\vertiii}[1]{{\left\vert\kern-0.25ex\left\vert\kern-0.25ex\left\vert #1 
    \right\vert\kern-0.25ex\right\vert\kern-0.25ex\right\vert}}
\newcommand{\R}{\mathbb{R}}
\newcommand{\Z}{\mathbb{Z}}
\newcommand{\cC}{\mathcal{C}}
\newcommand{\eps}{\varepsilon}
\newcommand{\1}{\mathds{1}}
\newcommand{\ds}{\displaystyle}
\newcommand{\Rm}{{\mathbb R}}
\numberwithin{equation}{section}
\numberwithin{theorem}{section}
\Crefname{assumption}{Assumption}{Assumptions}
\Crefname{theorem}{Theorem}{Theorems}
\Crefname{lem}{Lemma}{Lemmas}
\Crefname{cor}{Corollary}{Corollaries}
\Crefname{prop}{Proposition}{Propositions}
\Crefname{theorem}{Theorem}{Theorems}
\Crefname{conjecture}{Conjecture}{Conjectures}
\begin{document}
%\newlength{\ipjwidth}\setlength{\ipjwidth}{339.66878pt}\addtolength{\ipjwidth}{-\linewidth}\typeout{\the\ipjwidth}
\title{The Bramson logarithmic delay in the cane toads equations}

%  \thanks{%
%    The authors thank \'Emeric's travel grant?
%  }%
%

\author{Emeric Bouin 
\footnote{CEREMADE - Universit\'e Paris-Dauphine, UMR CNRS 7534, Place du Mar\'echal de Lattre de Tassigny, 75775 Paris Cedex 16, France. E-mail: \texttt{bouin@ceremade.dauphine.fr}}\and  
Christopher Henderson \footnote{Ecole Normale Sup\'erieure de Lyon, UMR CNRS 5669 'UMPA', 46 all\'ee d'Italie, F-69364~Lyon~cedex~07, France. E-mail: \texttt{christopher.henderson@ens-lyon.fr}}\and
Lenya Ryzhik \footnote{Department of Mathematics, Stanford University, Stanford, CA 94305, E-mail: \texttt{ryzhik@math.stanford.edu}}}

\maketitle
\begin{abstract}
We study a nonlocal reaction-diffusion-mutation equation modeling 
the spreading of a cane toads
population structured by a phenotypical trait responsible for 
the spatial diffusion rate. When the trait space is bounded,
the cane toads equation admits traveling wave solutions~\cite{BouinCalvez}.  
Here, we prove a Bramson type spreading result: the lag between the 
position of solutions with localized initial data and that of the 
traveling waves grows as $(3/(2\lambda^*))\log t$. This result relies on a
present-time Harnack inequality which allows to compare solutions of the cane
toads equation to those of a Fisher-KPP type equation that is local in the
trait variable.
\end{abstract}
%\noindent{\bf Key-Words:} {Structured populations, Reaction-diffusion equations, Logarithmic delay, Front propagation}\\
%\noindent{\bf AMS Class. No:} {35Q92, 45K05, 35C07}
%
%\keywords{Navier-Stokes equation, infinite energy solutions, extended system, long-time behavior, Lyapunov function, asymptotic stability}
%\subjclass{76D05, 35Q30, 76M25, 65M06}

%\tableofcontents
\section{Introduction}

\subsection*{The cane toads spreading}

Cane toads were introduced in Queensland, Australia in 1935,
to control the native cane beetles in sugar-cane fields.
Initially, about one hundred cane toads were released, and by now,
their population is estimated to be about two hundred million,   
leading to disastrous ecological effects.
Their invasion has interesting 
features different from the standard spreading observed in 
most other species~\cite{phillips2006invasion}. 
Rather than invade at a constant speed,
the annual rate of progress of the toad invasion front has 
increased by a factor of about five since the toads were first introduced:
the toads expanded their range by about
10~km a year during the 1940s to the 1960s, but were invading new
areas at a rate of over 50 km a year by 2006. Toads with 
longer legs move faster and are the first to arrive to new areas,
followed later by those with shorter legs. In addition, 
those at the front have longer legs than toads in the long-established
populations --  the typical leg length of the advancing population
at the front grows in time.
The leg length is greatest in the new
arrivals and then declines over a sixty year period.  
The cane toads are just one example of  a non-uniform space-trait 
distribution -- one other is  the expansion of the bush crickets in 
Britain~\cite{Thomas}. There, the difference is 
between the long-winged and short-winged crickets,
with similar conclusions.   In all such phenomena, modelling of the spreading rates has
to include the trait structure of the population. 

\subsection*{The cane toads equation}
 
We consider here a model of the cane toads invasion proposed in~
\cite{BenichouEtAl},
based on the classical Fisher-KPP equation~\cite{Fisher,KPP}.  The population density $n(t,x,\theta)$ 
is structured by a spatial variable~$x$, and a 
motility variable $\theta$. 
This population undergoes diffusion in the trait variable $\theta$, with a constant diffusion coefficient, 
representing mutation, and in the spatial variable, with the diffusion coefficient~$\theta$, representing the effect of the trait  on the spreading 
rates of the species.  In addition, each toad competes locally in space with all other individuals for resources. 
If the competition is local in the trait variable, 
then the corresponding Fisher-KPP model is
\begin{equation}\label{nov1004}
u_t=\theta u_{xx}+u_{\theta\theta}+u(1-u).
\end{equation}
It is much more biologically relevant to consider a non-local in trait competition (but still local in space), which leads to 
\begin{equation}\label{nov1006}
n_t=\theta n_{xx}+ n_{\theta\theta}+rn(1-\rho),
\end{equation}
where
\begin{equation}\label{nov1008}
\rho(t,x)=\int_{\Theta}  n(t,x,\theta)d\theta
\end{equation}
is the total population at the position $x$. Here, $\Theta$ is the set of all possible traits. It is either an infinite 
semi-interval:~$\Theta=[\underline\theta,+\infty)$, or an interval $\Theta=[\underline\theta,\overline\theta]$.   For simplicity,
we consider the one-dimensional case: $x\in\Rm$. 
Both (\ref{nov1004}) and (\ref{nov1006}) are supplemented by  Neumann boundary conditions at $\theta=\underline\theta$
and $\theta=\overline\theta$ (in the case when $\Theta$ is a finite interval):
\begin{equation}\label{nov1012}
n_\theta(t,x,\underline\theta)=n_\theta(t,x,\overline\theta)=0,~~t>0,~x\in\Rm.
\end{equation}
%The non-dimensional versions of (\ref{nov1004}) and (\ref{nov1006}) are, respectively,
%\begin{equation}\label{nov1014}
%u_t=\theta u_{xx}+ u_{\theta\theta}+u(1-u),~~~t>0,~x\in\Rm,~\theta\in\Theta,
%\end{equation}
%and 
%\begin{equation}\label{nov1016}
%n_t=\theta n_{xx}+ n_{\theta\theta}+n(1-\rho),~~~\rho(t,x)=\int_{\underline\theta}^\infty n(t,x,\theta)d\theta,
%~~~~~t>0,~x\in\Rm,~\theta\in\Theta.
%\end{equation}
%\[
%	x \mapsto \frac{\alpha^{1/4}}{r^{3/4}}x, ~~~~ \theta \mapsto \sqrt\frac{\alpha}{r} \theta, ~~~~ u\mapsto \sqrt\frac{\alpha}{r} u,
%\]
%reduces \eqref{eq:nonlocal_toads_full} to the non-dimensional form
%\begin{equation}\label{eq:nonlocal_toads}
%	\begin{cases}
%		u_t = \theta u_{xx} + u_{\theta\theta} + u(1 - \rho), \qquad &(t,x,\theta) \in \R^+ \times \R \times \Theta, \medskip\\
%		u_\theta (t,x,\underline\theta) = 0\, ,  &(t,x) \in \R^+ \times \R, \medskip \\
%		u(0,x,\theta) = u_0(x,\theta),   &(x,\theta) \in \R \times \Theta.
%	\end{cases}
%\end{equation}
% \eqref{eq:main} can be first considered with a bounded domain of traits $\Theta = (\underline{\theta},\overline{\theta} < +\infty)$ adding another Neumann boundary condition in $\overline{\theta}$.

The cane toads equation is but one example among other non-local reaction 
models  that have been extensively studied recently~\cite{AlfaroCovilleRaoul,BerJinSil,BouinMirrahimi,FayeHolzer, HamelRyzhik,NadinPerthameTang,NadinRossiRyzhikPerthame}.  Mathematically, non-local models are 
particularly interesting since their solutions do not obey the maximum principle and standard propagation
results for the scalar local reaction-diffusion equations do not apply. Rather, on the qualitative level they behave as
solutions of systems of reaction-diffusion equations, for which much fewer spreading results are available.
The study of the spreading of solutions to the
cane toads equations started with 
a Hamilton-Jacobi framework that was formally developed in~\cite{BCMetal}, and rigorously justified  in~\cite{Turanova}
when $\Theta$ is a finite interval. 
Existence of the travelling waves for (\ref{nov1006})  in that case has been proved in
\cite{BouinCalvez}.

 As far as unbounded traits are concerned, a formal argument 
in~\cite{BCMetal} using a Hamilton-Jacobi framework predicted front acceleration, observed in the field,
and the spreading rate of $O(t^{3/2})$.
A rigorous proof of this spreading rate has been given in~\cite{BerestyckiMouhotRaoul,BoHR}. 
%This is  an addition to 
%the growing list of ``accelerating fronts'' that have attracted some interest in recent years 
%\cite{BouinCalvezNadin, HamelRoques, HendersonFast, Garnier, MeleardMirrahimi, CoulonRoquejoffre, CCR, CabreRoquejoffre}.   
%

\subsection*{The main results}
 
In this paper, we consider the spreading rate of the solutions of the non--local cane toads equation~(\ref{nov1006})-(\ref{nov1008}),
with $x\in\Rm$ and $\theta\in\Theta=[\underline\theta,\overline\theta]$, and the Neumann boundary conditions  (\ref{nov1012}).
The initial condition $n(0,x,\theta)=n_0(x,\theta)\not\equiv 0$ is non-negative and has localized support in a sense to be made precise later. The classical result
of~\cite{Fisher,KPP} says that solutions of the scalar KPP equation 
\begin{equation}\label{aug2902}
v_t=v_{xx}+v(1-v)
\end{equation}
with a non-negative compactly supported initial condition  $v_0(x)=v(0,x)$
propagate with the speed~$c^*=2$ in the sense that
\begin{equation}\label{aug2904}
\lim_{t\to+\infty} v(t,ct)=0,
\end{equation}
for all $c>c^*$, and
\begin{equation}\label{aug2905}
\lim_{t\to+\infty} v(t,ct)=1,
\end{equation}
for all $c\in[0,c^*]$. The corresponding result for the solutions of (\ref{nov1006}) follows from the Hamilton-Jacobi limit in~\cite{Turanova}.
The Fisher-KPP result  for the solutions of (\ref{aug2902})
has been refined by Bramson in~\cite{Bramson78,Bramson83}. He has shown the following: for any $m\in(0,1)$, let
\[
X_m(t)=\sup\{x:~v(t,x)=m\},
\]
with $s\in (0,1)$. This level set has the asymptotics
\begin{equation}\label{aug2906}
X_m(t)=2t-\farc{3}{2}\log t+x_m+o(1),~~\hbox{ as $t\to+\infty$}.
\end{equation}
Here, $x_m$ is a constant that depends on $m$ and the initial condition $v_0$. Bramson's original 
proof was probabilistic. A shorter probabilistic proof can be found in a recent paper~\cite{Roberts}, 
while the~PDE proofs can be found in~\cite{Lau,Uchiyama} and, more recently, in~\cite{HNRR13}. Various extensions to equations
with inhomogeneous coefficients 
have also been studied in~\cite{FZ2,FangZeitouni,HNRR12,MaillardZeitouni,NRR}.   
In this paper, we establish
a version of (\ref{aug2906}) -- but with the weaker $O(1)$ correction rather than $o(1)$ as in (\ref{aug2906}) --
for the solutions of the  non-local cane toads equation  (\ref{nov1006}).
% that we re-state as:
%\begin{equation}\label{e:toads}
%n_t= \theta n_{xx} + n_{\theta\theta} + n ( 1 - \rho), ~~~~\text{    $t>0$, $x\in\Rm$,
%$\theta\in [\underline{\theta},\overline{\theta}]$},
%\end{equation}
%with the boundary condition
%\begin{equation}\label{aug2914}
%n_\theta(t,x,\underline{\theta}) = n_{\theta}(t,x,\overline{\theta}) = 0, \text{   $t>0$, $x\in  \R$}, 
%\end{equation}
%and the initial condition $n_0(x,\theta)=n(0,x,\theta)\ge 0$, and  
%\begin{equation}\label{aug2912} 
%\rho(t,x) = \int_{\underline{\theta}}^{\overline{\theta}} n(t,x,\theta) d\theta .\nonumber
%\end{equation}
We will assume that the initial condition is compactly supported on the right: there  exists $x_0$ such that $n_0(x) = 0$ for all $x \geq x_0$.
It has been shown in \cite{BouinCalvez} that (\ref{nov1006})-(\ref{nov1012}) admits a travelling wave solution
of the form $n(t,x,\theta)=\phi(x-c^*t,\theta)$. It is expected that the function $\phi(\xi,\theta)$ has the asymptotic decay 
\begin{equation}\label{aug2916}
\phi(\xi,\theta)\sim \xi e^{-\lambda^*\xi}Q(\theta),
\end{equation}
with a uniformly positive function $Q(\theta)>0$. While~\cite{BouinCalvez} does not show that travelling waves exist for all $c>c^*$, this is expected.  This would imply that $c^*$ is the minimal speed of propagation for the cane toads equation, in the same sense 
as $\tilde c^*=2$ is the minimal speed of propagation for the
Fisher-KPP equation (see also \cite[Remark 4]{BouinCalvez}). A precise characterization of the minimal speed $c^*$ and the 
decay rate $\lambda^*$ from~\cite{BouinCalvez} is recalled in Section~\ref{sec:spectral}.
Here is our main result. 
\begin{theorem}\label{p:toads}
Let $n(t,x,\theta)$ satisfy the system \eqref{nov1006}-(\ref{nov1012}), with the initial condition $n_0(x)\ge 0$
satisfying the assumptions above. There exists $m_0$ such that for all 
$\eps\in (0,m_0)$, there is a positive constant $C_\eps$ such that
\[\begin{split}
	&\liminf_{t\to\infty} \inf_{x \leq c^*t - \frac{3}{2\lambda^*}\log(t) - C_\eps} n(t,x) \geq m_0 - \eps,\\
	&\limsup_{t\to\infty} \sup_{x \geq c^*t - \frac{3}{2\lambda^*}\log(t) + C_\eps} n(t,x) \leq \eps.
\end{split}\]
\end{theorem}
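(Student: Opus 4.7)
The plan is to prove the two inequalities in Theorem~\ref{p:toads} separately, following the philosophy of the recent PDE proofs of Bramson's delay in \cite{HNRR13,NRR}, but with the substantial additional ingredient flagged by the authors in the abstract: a present-time Harnack inequality that reduces the lower bound to a scalar Fisher-KPP problem that is local in the trait variable.

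For the upper bound, I would drop the quadratic competition and work with the linear inequality $n_t\leq \theta n_{xx}+n_{\theta\theta}+rn$. In the logarithmically shifted frame $\xi=x-c^*t+\frac{3}{2\lambda^*}\log t$ I would try the ansatz
\[
\bar n(t,x,\theta)=A\,t^{-3/2}(\xi+C_\eps)_+\,e^{-\lambda^*\xi}\,Q(\theta),
\]
where $(\lambda^*,Q)$ is the principal eigenpair of the spectral problem recalled in Section~\ref{sec:spectral}. Using that $c^*=c(\lambda^*)$ is a \emph{double root} of the associated dispersion relation (so that the first $\lambda$-variation vanishes), a direct verification shows $\bar n$ is a supersolution up to an error controlled by the $t^{-3/2}$ prefactor. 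The compactly supported, right-localized initial data lies below a large multiple of $\bar n(1,\cdot,\cdot)$, and the linear parabolic comparison principle then yields the upper estimate.

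For the lower bound, the first step is a coarse spreading estimate, ensuring that $n$ is uniformly bounded below in a ball moving at any speed strictly less than $c^*$; this is provided by the Hamilton--Jacobi analysis of \cite{Turanova}, or alternatively by a truncated subsolution of the linearized equation. Combined with the upper bound just established, this confines the ``front region'' to a narrow strip around $x\sim c^*t-\frac{3}{2\lambda^*}\log t$, where $\rho(t,x)$ can be made arbitrarily small. The present-time Harnack inequality then pins the $\theta$-profile of $n$ to $Q(\theta)$ in the precise sense that, for $(t,x)$ in this strip,
\[
c_1\,Q(\theta)\,\rho(t,x)\;\leq\; n(t,x,\theta)\;\leq\; c_2\,Q(\theta)\,\rho(t,x).
\]
Substituting this back into \eqref{nov1006} and integrating against a weight given by the corresponding left eigenfunction of the $\theta$-spectral problem, a weighted version of $\rho$ is shown to be a subsolution of a scalar Fisher-KPP equation whose linearization has minimal speed $c^*$ and decay rate $\lambda^*$, and to which the PDE version of Bramson's lower bound from \cite{HNRR13} applies directly.

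The main obstacle I anticipate is establishing the present-time Harnack inequality uniformly in the moving frame, and in particular out to the leading edge $\xi=O(1)$ where $n$ is exponentially small; degenerate Harnack constants at this scale would destroy the sharpness of the $\log$ correction. The paper presumably circumvents this by a short-time relaxation argument in the trait variable, using the non-degenerate diffusion $n_{\theta\theta}$ to pin the shape of $n$ in $\theta$ to a multiple of $Q$ independently of the amplitude. A secondary subtlety is ensuring that the scalarized Fisher-KPP equation produced by integration in $\theta$ inherits exactly the decay rate $\lambda^*$ of the full coupled spectral problem rather than a naively-computed one; this should follow from the correct choice of left-eigenfunction weight, which is precisely the mechanism that couples the scalar dispersion relation to that of the original system.
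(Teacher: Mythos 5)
Your overall philosophy (Harnack inequality plus reduction to a local KPP problem plus Bramson machinery) matches the paper, but both halves of your argument have genuine gaps at exactly the places where the paper's technical work lives. For the upper bound, the claimed ``direct verification'' that $\bar n = A\,t^{-3/2}(\xi+C_\varepsilon)_+e^{-\lambda^*\xi}Q(\theta)$ is a supersolution of the linearized equation fails. Because $c^*$ is a double root, the leading terms cancel \emph{exactly}, so the sign of the residual is decided by the $O(1/t)$ terms generated by the $\frac{3}{2\lambda^*}\log t$ shift and the $t^{-3/2}$ prefactor. In the model case $D\equiv1$, $A\equiv0$ ($c^*=2$, $\lambda^*=1$, $Q\equiv1$) one computes, with $\xi = x-2t+\frac32\log t$ and $\psi(\xi)=A(\xi+C_\varepsilon)$,
\[
\bar n_t - \bar n_{xx} - \bar n \;=\; \frac{e^{-\xi}}{t^{3/2}}\Bigl(-\psi'' + \tfrac{3}{2t}\bigl(\psi' - 2\psi\bigr)\Bigr)\;=\;\frac{3A}{2\,t^{5/2}}\,e^{-\xi}\bigl(1 - 2(\xi+C_\varepsilon)\bigr),
\]
which is \emph{negative} for $\xi + C_\varepsilon > 1/2$; moreover $(\xi+C_\varepsilon)_+$ has a convex kink (wrong sign for a supersolution), and behind the front $\bar n$ vanishes while $n\le M$, so the comparison also fails on the lateral boundary of any truncated domain. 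Finally, ``dropping the quadratic competition'' is not enough to close the argument even with a correct profile: to cap the supersolution behind the front one needs a local nonlinearity with a zero, i.e.\ the Harnack reduction to \eqref{e:toads_upper}. The paper avoids all of this by solving the linearized problem with a Dirichlet condition on the moving boundary \eqref{sep2812} and proving the sharp $t^{-3/2}$, linear-in-$(x-c^*\tau)$ behavior of that solution (\Cref{p:p_decay}), which is the technical heart (self-adjoint form, Nash inequality on cylinders, approximate solutions); your ansatz does not supply a substitute for that estimate.

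For the lower bound, your key step assumes the Harnack inequality pins the trait profile, $c_1Q(\theta)\rho\le n\le c_2Q(\theta)\rho$, so that integrating in $\theta$ against a left eigenfunction closes into a scalar KPP equation to which \cite{HNRR13} applies. The paper's Harnack inequality (\Cref{p:harnack}, \Cref{p:toads_harnack}) gives only the \emph{power-type} bound $n(t,x,\theta)\le C\,n(t,x',\theta')^{1/p}$ with $p>1$ (and the Gaussian example after \Cref{p:harnack} shows $p=1$ is impossible), so no linear shape-pinning and no closed scalar equation in $x$ is available; the obstacle you flag as ``presumably circumvented by the paper'' is in fact not circumvented but sidestepped entirely. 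The paper instead sandwiches $n$ between solutions of the local cylinder equations \eqref{e:toads_lower}--\eqref{e:toads_upper}, whose nonlinearities only satisfy $u - M_\delta u^{1+\delta}\le f(u)\le u$ with $\delta=1/p$ or $\delta=p$, and proves the Bramson shift for such equations directly on $\R\times\Theta$ (\Cref{p:kpp}), using that $c^*,\lambda^*$ depend only on $f'(0)$ and that any $\delta>2/3$ suffices (hence $p\in(1,3/2)$). In addition, your ``coarse spreading at speeds $<c^*$ plus strip confinement'' cannot produce the $O(1)$-precise lower bound on the level sets: the paper needs the quantitative bound $u\gtrsim e^{-\sigma\sqrt t}/t$ at $x=c^*t+\sigma\sqrt t$ (again from \Cref{p:p_decay}) to slide the shifted traveling wave $\underline U(t,x,\theta)=U(x-c^*t+\frac{3}{2\lambda^*}\log(1+t),\theta)$ under $u$, and without this mechanism the front is only localized to within $o(t)$, not to within $O(1)$ of $c^*t-\frac{3}{2\lambda^*}\log t$.
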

The main difficulty in the proof of  Theorem~\ref{p:toads} is the lack of the maximum principle. 
In order to circumvent this, we obtain a present-time Harnack inequality for $n$, described below, which
is of an independent interest.  Using this, we reduce the problem 
to showing the logarithmic delay for the local Fisher-KPP system (\ref{nov1004}), a 
much simpler problem, as it obeys the maximum
principle. The analysis for the local equation
follows the general strategy of~\cite{HNRR12}, with some non-trivial modifications.

\subsubsection*{A parabolic Harnack inequality}

We will make use of the following version of the Harnack inequality, that 
is new, to the best of our knowledge.
Consider an operator
\begin{equation}\label{jan2602}
	Lu=\sum_{ij} a_{ij}(x)\farc{\partial^2u}{\partial x_i \partial x_j}.
\end{equation}
Here, $A(x) := \left( a_{ij}(x) \right)$ is a H\"older continuous, uniformly elliptic matrix:
there  exist~$\lambda>0$ and~$\Lambda>0$ such that 
\[
\forall x \in \R^n, \qquad \lambda I \leq A(x) \leq \Lambda I,
\]
in the sense of matrices.
\begin{theorem}\label{p:harnack}
Suppose that $u$ is a positive solution of
\begin{equation}\label{e:heat_equation}
	u_t - L u = 0,~t>0,~x\in\Rm^n.
\end{equation}
For any $t_0>0$, $R>0$, and  $p > 1$, 
there exists a constant $C$ such that if $t \geq t_0$ and~$|x - y| \leq R$, then
\begin{equation}\label{mar3004}
u(t,x) \leq C \|u_0\|_{\infty}^{1-1/p} u(t,y)^{1/p}.
\end{equation}
Moreover, $C$ depends only on $\lambda$, $\Lambda$, $n$, $t_0$, $R$, and $p$.
\end{theorem}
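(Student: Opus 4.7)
The plan is to combine three ingredients: a backward parabolic Harnack chain, the semigroup representation of $u$ via the parabolic Green's function of $L$, and the elementary inequality $\min(a, b) \leq a^{1/p} b^{1-1/p}$ valid for $a, b \geq 0$ and $p > 1$.

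First I would iterate the classical Krylov--Safonov parabolic Harnack inequality along a chain of parabolic cylinders joining $(t, y)$ to $(t - \tau, z)$ to establish the backward Harnack chain estimate
\[
u(t-\tau, z) \leq C_0 \exp\!\left(c_1 \frac{|z-y|^2}{\tau}\right) u(t, y), \qquad z \in \R^n,
\]
valid for $\tau \in (0, t - t_0/2]$, with constants $C_0, c_1$ depending only on $\lambda, \Lambda, n, t_0$. Combined with the maximum-principle bound $u(t-\tau, z) \leq M := \|u_0\|_\infty$ and with the semigroup representation $u(t, x) = \int K_\tau(x, z) u(t-\tau, z)\, dz$, where $K_\tau$ is the parabolic Green's function of $L$ (satisfying the Aronson upper bound $K_\tau(x, z) \leq C_A \tau^{-n/2} e^{-c_3 |x - z|^2/\tau}$), this yields
\[
u(t, x) \leq \int K_\tau(x, z) \min\!\bigl(C_0 m\, e^{c_1|z - y|^2/\tau},\, M\bigr)\, dz, \qquad m := u(t, y).
\]

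Next I would apply the pointwise interpolation $\min(a, b) \leq a^{1/p} b^{1-1/p}$ inside the integrand, obtaining
\[
u(t, x) \leq C_0^{1/p}\, m^{1/p}\, M^{1 - 1/p} \int K_\tau(x, z) \exp\!\left(\frac{c_1|z-y|^2}{p \tau}\right) dz.
\]
The remaining Gaussian integral is evaluated by completing the square: using Aronson's bound for $K_\tau$, and provided the convergence condition $c_1/p < c_3$ holds, it is bounded by a constant depending on $\lambda, \Lambda, n, R, t_0, p$ for $|x - y| \leq R$ and $\tau = t_0/2$. This establishes the theorem in the admissible range of $p$.

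The main obstacle is the convergence condition $c_1/p < c_3$, which constrains the admissible range of $p$ through the ratio of the Harnack-chain constant $c_1$ and the Aronson decay constant $c_3$. For the heat equation these coincide ($c_1 = c_3 = 1/4$) and the argument handles all $p > 1$; in the general uniformly elliptic case with $\lambda < \Lambda$ one may have $c_1 > c_3$, and to reach the remaining values of $p$ one iterates the basic bound through a chain of intermediate points $y = y_0, y_1, \dots, y_k = x$ with $|y_{i+1} - y_i| \leq R$, with the constant $C$ depending on $p$ through the number of iterations.
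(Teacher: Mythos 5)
Your computation up to the convergence condition is sound, and it does give the inequality for exponents $p>c_1/c_3$; but that is where the proof stops, and the device you propose for the remaining exponents does not work. The constraint $c_1/p<c_3$ comes from integrability in $z$ at infinity of $\exp\{-c_3|x-z|^2/\tau+(c_1/p)|z-y|^2/\tau\}$ and is completely insensitive to $|x-y|$, so passing through intermediate points $y=y_0,\dots,y_k=x$ with small gaps does not relax it. Worse, composing the two-point inequality along such a chain multiplies the exponents: from $u(t,x)\le C\,u(t,y_1)^{1/p_1}\|u_0\|_\infty^{1-1/p_1}$ and $u(t,y_1)\le C\,u(t,y_2)^{1/p_1}\|u_0\|_\infty^{1-1/p_1}$ you only get $u(t,x)\le C'\,u(t,y_2)^{1/p_1^{2}}\|u_0\|_\infty^{1-1/p_1^{2}}$, i.e.\ a weaker statement with a larger effective $p$; iteration moves you away from $p$ close to $1$, never toward it. Since for $\lambda<\Lambda$ the Krylov--Safonov chaining constant $c_1$ and the Aronson decay constant $c_3$ genuinely need not match, the range $1<p\le c_1/c_3$ is left unproved — and this is exactly the range that matters, both for the statement (``any $p>1$'') and for its use later in the paper, where the inequality is applied with $p\in(1,3/2)$.

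The missing ingredient is two-sided kernel information with the \emph{same} Gaussian rate on both sides, which crude Aronson bounds plus Harnack chains cannot supply; this is precisely why the paper's proof invokes Varadhan's small-time asymptotics, $-4t\log G(t,x,y)\to d_A(x,y)^2$ uniformly, giving upper and lower bounds with matched rates up to factors $1\pm\eps$. With matched rates one can prove $G(t_0,x,z)^{sp}\le C\,G(t_0,0,z)$ for all $z$ and $|x|\le R$, for any $s\in(0,1)$ with $sp>1$ (triangle inequality for $d_A$ plus Young's inequality, after taking $t_0$ small), and then H\"older's inequality in the Duhamel representation yields \eqref{mar3004} for every $p>1$. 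If you wish to keep your $\min$-interpolation structure, you would need a backward Harnack bound and a kernel upper bound whose exponential constants agree to within a factor absorbable by $sp>1$ — which again forces a Varadhan-type input rather than Krylov--Safonov chaining together with Aronson's estimate.
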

We point out that~\Cref{p:harnack} does
not hold with $p = 1$.  Indeed, when $n =1$ and $(a_{ij}) = I$, the solution $u(t,x) = t^{-1/2}\exp\left\{-x^2/4t\right\}$ does not satisfy~\eqref{mar3004}.

The paper is organized as follows. First, we prove  Theorem~\ref{p:harnack} in Section \ref{s:harnack}.
Then, in Section~\ref{sec:reduction}, we use the Harnack inequality to reduce the spreading rate question
for the non-local cane toads equation to that for the local problem (\ref{nov1004}). Section~\ref{sec:log-proof}
contains the   proof of the corresponding result for the local equation, with its most
technical part presented in Section~\ref{s:p_decay}. 
%\EB{Finally, Section~\ref{sec:nash}
%presents a rather standard proof of a particular form of the Nash inequality in a cylinder that  we need in Section~\ref{s:p_decay}}.

{\bf Acknowledgement.}   EB was supported by ``INRIA Programme Explorateur".  LR was supported by NSF grant DMS-1311903. 
Part of this work was performed within the framework of the LABEX MILYON (ANR- 10-LABX-0070) of Universit\'e de Lyon, 
within the program ÒInvestissements dÕAvenirÓ (ANR-11- IDEX-0007) operated by the French National Research Agency (ANR). 
In addition, CH has received funding from the European Research Council (ERC) under the European Unions Horizon 2020 research and 
innovation programme (grant agreement No 639638).

\section{A present-time parabolic Harnack inequality} \label{s:harnack}

In this section, we prove Theorem~\ref{p:harnack}. It is a consequence of 
a  small time heat kernel estimate due to Varadhan~\cite{Varadhan}. 
Let $G(t,x,y)$ be the fundamental solution to~\eqref{e:heat_equation}:  
\begin{equation}\label{e:heat_kernel}
\begin{cases}
	G_t =L_x G,~~t>0,~~x,y\in\Rm^n,\\
	G(0,\cdot,y) = \delta(\cdot - y),
\end{cases}
\end{equation}
so that the solution of
\[\begin{cases}
	u_t - Lu = 0,~~~t>0,~~x\in\Rm^n,\\
	u(0,x) = u_0(x),
\end{cases}\]
can be written, for all $t>0$ and $x\in \R^n$, as 
\[
	u(t,x) = \int_{\R^n} G(t,x,y) u_0(y) dy.
\]
The notation $L_x$ in (\ref{e:heat_kernel}) means that the operator $L$  acts on $G$ 
in the $x$ variable.
There are well-known Gaussian bounds for $G$ (see e.g.~\cite{Aronson, FabesStroock}) 
of the type
\[
\frac{c_1}{t^{n/2}}e^{-c_2 \frac{|x-y|^2}{t}}\le G(t,x,y)\le 
\frac{C_1}{t^{n/2}}e^{-C_2\frac{|x-y|^2}{t}},
\]
for $(t,x,y) \in \R^+ \times \R^n \times \R^n$. However, these are not precise enough in their dependence 
on $x$ and~$y$ for our purposes, as they do not control the constants
$c_2$ and $C_2$ very well. 
%We note below, where the standard estimates are not sufficient.
%One consequence of the 
  %The following are well known Gaussian bounds for the heat kernel.
%\begin{theorem}[Theorem ? FABES STROOCK]\label{p:fabes_stroock}
%Let $G$ be defined as in \Cref{e:heat_kernel}.  Then there exists a positive constant $C$, depending only on $n$, $\lambda$, and $\Lambda$ such that
%\[
%	\frac{1}{Ct^{n/2}} \exp\left\{ - \frac{C|x-y|^2}{t}\right\}
%		\leq G(t,x,y)
%		\leq 	\frac{C}{t^{n/2}} \exp\left\{ - \frac{|x-y|^2}{Ct}\right\}
%\]
%holds for all $x$, $y$ and $t$.
%\end{theorem}
%We utilize these bounds in our proof, but we will also need a more precise form due to Varadhan discussed below.

To state Varadhan's estimate, we introduce some notation.  
Given a matrix $A(x)=(a_{ij}(x))$, the associated Riemannian metric $d_A$ is
\[
d_A(x,y) = \inf_{\substack{p \in C^1([0,1]),\\ p(0) = x, p(1) = y}} 
\int_0^1 \sqrt{\dot p(\tau) A^{-1}(p(\tau)) \dot p (\tau)} d\tau.
\]
The ellipticity condition on the matrix $A$  
implies that $d_A$ and $|\cdot|$ yield equivalent metrics.
\begin{theorem}[Theorem~2.2~\cite{Varadhan}]
The limit
\[
	\lim_{t\to0} \left(- 4t \log G(t,x,y)\right) = d_A(x,y)^2
\]
holds uniformly for all $x$ and $y$ such that $|x - y|$ is bounded.
\end{theorem}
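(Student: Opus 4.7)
The plan is to establish the two matching inequalities
$$\limsup_{t\to 0}\bigl(-4t\log G(t,x,y)\bigr) \leq d_A(x,y)^2 \leq \liminf_{t\to 0}\bigl(-4t\log G(t,x,y)\bigr),$$
by comparing $G$ locally with the explicit frozen-coefficient heat kernel
$$G_z(t,x,y) = (4\pi t)^{-n/2}(\det A(z))^{-1/2}\exp\Bigl(-\tfrac{1}{4t}(x-y)^\top A(z)^{-1}(x-y)\Bigr),$$
which satisfies $-4t\log G_z(t,x,y)\to (x-y)^\top A(z)^{-1}(x-y)$ as $t\to 0$, so that $d_{A(z)}$ is exactly the relevant infinitesimal metric. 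Uniform ellipticity and H\"older continuity of $A$ make all the error estimates in what follows depend only on $\lambda,\Lambda$, the H\"older data of $A$, and an upper bound on $|x-y|$, which yields the uniform convergence.

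For the $\limsup$ direction I would chain short-time Gaussian lower bounds along a near-minimizing $d_A$-geodesic. Fix a smooth curve $\gamma\colon[0,1]\to\R^n$ from $x$ to $y$ whose $d_A$-length is within $\eps$ of $d_A(x,y)$, subdivide it as $z_k=\gamma(k/N)$, and use the semigroup identity
$$G(t,x,y) \geq \int_{U_1}\!\cdots\!\int_{U_{N-1}}\prod_{k=0}^{N-1}G(t/N,w_k,w_{k+1})\,dw_1\cdots dw_{N-1}$$
over thin tubes $U_k$ around $z_k$. Aronson's local lower bound combined with H\"older continuity of $A$ lets one freeze coefficients inside each tube, bounding each factor below by $(1-o(1))G_{z_k}(t/N,w_k,w_{k+1})$. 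Taking logs produces a Riemann sum for $\int_0^1 \dot\gamma^\top A(\gamma)^{-1}\dot\gamma\,d\tau$, which converges to the squared $d_A$-length of $\gamma$ as $N\to\infty$ (after $t\to 0$); letting $\eps\to 0$ finishes the upper bound.

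For the $\liminf$ direction I would run a Davies-type weighted $L^2$ argument. Fix $y_0$ and, for any smooth $\phi$ with $\nabla\phi^\top A\nabla\phi \leq 1$ pointwise (equivalent to $\phi$ being $1$-Lipschitz in $d_A$), set $u(t,x):=G(t,x,y_0)$ and $F_\alpha(t):=\int u(t,x)^2 e^{2\alpha\phi(x)}\,dx$. Rewriting $L$ in divergence form modulo a first-order term of size controlled by the H\"older norm of $A$ (after a brief mollification), one integrates by parts and completes the square in $\nabla u$ to obtain
$$F_\alpha'(t) \leq \bigl(2\alpha^2 + o(1)\bigr)F_\alpha(t),$$
and Gr\"onwall yields $F_\alpha(t)\leq e^{2\alpha^2 t + o(1)}F_\alpha(0)$. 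Combining this with the Aronson Gaussian upper bound and a parabolic mean-value argument upgrades the $L^2$ estimate to the pointwise bound $G(t,x,y_0)\leq Ct^{-n/2}\exp(-\alpha[\phi(x)-\phi(y_0)]+\alpha^2 t + o(1))$. Optimizing $\alpha=(\phi(x)-\phi(y_0))/(2t)$ and then taking the supremum over admissible $\phi$, which equals $d_A(x,y_0)$ by the standard duality $d_A(x,y_0)=\sup\{\phi(x)-\phi(y_0):\nabla\phi^\top A\nabla\phi\leq 1\}$ (the function $\phi(\cdot)=d_A(\cdot,y_0)$ attains the sup), gives the required $\liminf$ bound.

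The main obstacle is this last step: adapting the Davies weighted-$L^2$ method to a non-divergence operator without degrading the leading exponent $d_A(x,y)^2/(4t)$, and then turning the weighted $L^2$ bound into a pointwise bound while keeping the error strictly $o(1/t)$. Both ingredients (careful mollification of $A$ to control the spurious drift, and quantitative use of the Aronson bounds for the $L^2\to L^\infty$ passage) are standard but technical; once they are in place, the duality identity for $d_A$ converts the $\phi$-parametrized family of bounds into the claimed asymptotic.
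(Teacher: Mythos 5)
This statement is not proved in the paper at all: it is Varadhan's Theorem~2.2, quoted verbatim from \cite{Varadhan}, and the authors only remark afterwards that Varadhan's argument adapts to give the uniform variant they actually use. So there is no in-paper proof to compare against; what can be assessed is whether your program would prove Varadhan's result, and as written it has two genuine gaps rather than merely ``standard but technical'' ones.

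First, the upper bound on $G$ (your $\liminf$ direction). The Davies weighted-$L^2$ method is tailored to divergence-form, self-adjoint operators; here $L=\sum a_{ij}\partial_{ij}$ with $A$ only H\"older continuous, so the identity $a_{ij}\partial_{ij}u=\partial_i(a_{ij}\partial_j u)-(\partial_i a_{ij})\partial_j u$ is unavailable, and mollifying does not rescue the computation: if you mollify inside the energy identity you are left with an error term $\int u\,(a_{ij}-a^\eta_{ij})\partial_{ij}u\,e^{2\alpha\phi}$ involving $D^2u$, which is not controlled by $F_\alpha$ or its dissipation, while if you replace the operator by the mollified one you either pick up a drift of size $\eta^{\beta-1}$ or change the kernel whose asymptotics you are trying to identify. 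This is exactly the crux of the non-divergence case (Varadhan's own treatment goes through parametrix/diffusion estimates precisely to avoid it), so flagging it as a routine technicality leaves the main step unproved. In addition, with $u(0,\cdot)=\delta_{y_0}$ the quantity $F_\alpha(0)$ is infinite, so the Gr\"onwall-from-$t=0$ step needs the usual semigroup splitting (weighted $L^1\to L^2\to L^\infty$); fixable, but missing. Second, in the chaining lower bound your order of limits fails: with $N$ fixed and $t\to0$, each factor $G(t/N,w_k,w_{k+1})$ has $|w_k-w_{k+1}|^2\gg t/N$, and in that off-diagonal regime the parametrix (or Aronson-type) comparison with the frozen kernel is only two-sided Gaussian with \emph{non-matching} exponential constants, so no $(1-o(1))$ multiplicative bound per factor is available; one must let $N=N(t)\to\infty$ with $t$ so that every step stays in the near-diagonal regime, and then track that the accumulated relative errors contribute $o(1/t)$ to the logarithm. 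With those two repairs your outline becomes a legitimate (and genuinely different, more probabilistic/functional-analytic) route to the theorem, but as stated it does not yet constitute a proof.
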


This agrees with the usual heat 
kernel when $L = \Delta$ since then $A = I$ 
and  $d_{A}(x,y)=|x - y|$.   
We may not use this result as 
stated as we will require a uniform estimate over all $x$ and $y$,
without a restriction to a compact set. 
However, it is easy to check 
that the proof in~\cite{Varadhan}, 
with a few straightforward modifications, implies the following.
\begin{theorem}\label{p:heat_kernel}
Given any $\eps >0$, 
the following inequalities hold uniformly over all $x,y\in\Rm^n$:
\begin{eqnarray}\label{031502}
&&\liminf_{t\to0}\big[ - 4t \log G(t,x,y)\big] 
\geq (1 - \eps) d_A(x,y)^2,\\
&&\limsup_{t\to0} \big[- 4t \log G(t,x,y) \big]
\leq (1+\eps) d_A(x,y)^2.\nonumber
\end{eqnarray}
\end{theorem}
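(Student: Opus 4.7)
The plan is to revisit Varadhan's proof of his Theorem 2.2 and verify that every estimate can be made uniform in $x, y \in \mathbb{R}^n$, rather than only for $|x-y|$ bounded. Both asymptotic inequalities of Theorem \ref{p:heat_kernel} reduce to the sharp two-sided Gaussian bounds
\begin{equation}\label{plan:gb}
c(\eps)\, t^{-n/2} \exp\!\Big(-\tfrac{(1+\eps) d_A(x,y)^2}{4t}\Big) \leq G(t,x,y) \leq C(\eps)\, t^{-n/2} \exp\!\Big(-\tfrac{(1-\eps) d_A(x,y)^2}{4t}\Big),
\end{equation}
valid for each fixed $\eps > 0$, all small $t > 0$, and all $x, y \in \mathbb{R}^n$. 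Once \eqref{plan:gb} is in hand, applying $-4t \log(\cdot)$ to both sides and sending $t \to 0^+$ yields both parts of Theorem \ref{p:heat_kernel}, because the logarithmic corrections $-4t \log(C(\eps) t^{-n/2})$ vanish uniformly in $x, y$.

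For the upper bound in \eqref{plan:gb}, I would apply the classical Davies-type exponential perturbation. Conjugating $L$ by $e^{\alpha \phi}$, where $\phi$ is a bounded Lipschitz approximant of $d_A(\cdot, y)$ satisfying $\nabla \phi \cdot A \nabla \phi \leq 1 + \eta$, yields an intertwined semigroup whose $L^2 \to L^\infty$ bound is of Nash-Aronson type; optimizing in $\alpha$ (optimal $\alpha \approx d_A(x,y)/(2t)$) produces the desired exponent. The resulting constants depend only on $\eps$, $\lambda$, $\Lambda$, and the H\"older norms of $A$, not on $x$ or $y$, which is the key point for uniformity.

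For the lower bound in \eqref{plan:gb}, I would use Varadhan's chaining construction along a near-geodesic path. Given $\eta > 0$, pick a $C^1$ curve joining $x$ to $y$ whose $d_A$-length is at most $(1+\eta) d_A(x,y)$, partition it into $N$ equal subarcs, and iterate the Chapman-Kolmogorov identity with local Gaussian lower bounds around each node. Uniform ellipticity guarantees that these local bounds hold with constants independent of the base point, and summing the exponents along the chain reproduces $(1+\eta)^2 d_A(x,y)^2/(4t)$, up to a multiplicative error $c(\eta)^N$ from the per-step prefactors.

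The principal obstacle is uniform control as $|x-y| \to \infty$. In the chaining argument, $N$ must grow linearly in $d_A(x,y)$, which by uniform ellipticity is comparable to $|x-y|$; after taking logarithms, the accumulated factor $c(\eta)^N$ contributes an additive error of order $O(|x-y|)$, which is negligible against the quadratic leading term $d_A(x,y)^2/t$ as $t \to 0$, precisely in the regime where $|x-y|$ is large. In the complementary bounded regime $|x-y| \leq R$, Varadhan's original proof applies verbatim. Stitching the two regimes together yields the uniform limits asserted in Theorem \ref{p:heat_kernel}.
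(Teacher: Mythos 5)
The paper itself gives no proof of this statement---it only asserts that Varadhan's argument goes through ``with a few straightforward modifications''---so your proposal has to be judged on its own terms. Its architecture is reasonable and close to what the authors presumably have in mind: reduce both limits to two-sided Gaussian bounds with $(1\pm\eps)$ in the exponent, observe that the $t^{-n/2}$ prefactors contribute an error that vanishes uniformly in $x,y$ after multiplying by $-4t$, and recognize that the only genuinely new issue relative to Varadhan is uniformity as $|x-y|\to\infty$. Your bookkeeping there is correct: a chaining prefactor $c(\eta)^{N}$ with $N\sim d_A(x,y)$ costs $4tN\log(1/c(\eta))=O(t\,d_A(x,y))\le \eps\, d_A(x,y)^2+O(t^2)$, which is exactly what the multiplicative $\eps$-slack in the statement is there to absorb.

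Two steps, however, do not work as written. First, the upper bound via Davies' method: that machinery (conjugation by $e^{\alpha\phi}$ plus Nash--Aronson $L^2\to L^\infty$ estimates) rests on the divergence structure of the operator, because the key differential inequality for $\|e^{\alpha\phi}u\|_2^2$ is obtained by integrating by parts against the quadratic form. Here $L=\sum_{ij}a_{ij}\partial^2_{x_ix_j}$ is in non-divergence form with coefficients that are only H\"older continuous, so one cannot integrate by parts without differentiating $a_{ij}$, and there is no symmetric Dirichlet form or explicit invariant measure available. The exponential-weight idea is the right one, but it has to be implemented as in Varadhan's own upper-bound argument, e.g.\ by a maximum-principle comparison of $G$ with supersolutions built from a smooth approximation $\psi$ of $d_A(\cdot,y)$ satisfying $\nabla\psi\cdot A\nabla\psi\le 1+\eta$; such comparison arguments use only the pointwise differential inequality and hence survive the non-divergence structure, with constants depending only on $\lambda$, $\Lambda$ and the H\"older norm of $A$. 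Second, in the chaining lower bound you attribute the per-step bounds to ``uniform ellipticity''. Uniform ellipticity alone yields Gaussian lower bounds with a non-sharp constant in the exponent, and summing those along the chain produces $C\,d_A(x,y)^2/t$ with $C$ far from $1/4$, not the $(1+\eta)$-sharp exponent you need. The per-step input must be the sharp bounded-distance asymptotics themselves (the statement quoted from~\cite{Varadhan}, uniform over pairs with $|z-w|\le R$, which uses the continuity of $A$ and the translation-uniformity of the ellipticity and H\"older bounds), applied on steps of unit $d_A$-length over time $t/N$. With that substitution, and with the upper bound reworked as above, your reduction and error accounting do yield the theorem.
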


We can now proceed with the proof of \Cref{p:harnack}.
\begin{proof}[{\bf Proof of Theorem~\ref{p:harnack}}]
Without loss of generality we may assume that $y=0$ and $|x|\le R$
in~(\ref{mar3004}). Let us take  $t_0>0$ and write, for all $t>t_0$ and $x\in\R^n$:
\[
u(t,x) = \int_{\R^n} G(t_0,x,y) u(t-t_0,y) dy.
\]
We have, using the
maximum principle, with some $s\in(0,1)$,  
to be specified later:  
\begin{equation}\label{jan2604}
\begin{split}
u(t,x)&= \int_{\R^n} G(t_0,x,z) u(t-t_0,z) dz \\
&\leq \|u(t-t_0,\cdot)\|_\infty^{1/q} 
\int_{\R^n} \left(u(t-t_0,z)  G(t_0,x,z)^{s p}\right)^{1/p} \left(G(t_0,x,z)^{(1-s)q}\right)^{1/q} dz\\
&\leq \|u_0\|_\infty^{1/q} 
\left( \int_{\R^n} u(t-t_0,z) G(t_0,x,z)^{sp} dz\right)^{1/p} 
\left(  \int_{\R^n} G(t_0,x,z)^{(1-s)q} dz\right)^{1/q}\\
&\leq C %t_0^{r + 1/q - 1}
\|u_0\|_\infty^{1/q} 
\left( \int_{\R^n} u(t-t_0,z) G(t_0,x,z)^{sp} dz\right)^{1/p}.
\end{split}
\end{equation}
Here, we have chosen $q\in(1,\infty)$ satisfies
\[\frac 1{p}+\farc1{q}=1,
\]
and the constant $C>0$ depends on $t_0$ (in particular, it
blows up as $t_0\downarrow 0$).
The last inequality in (\ref{jan2604}) is an application 
of the bounds in \eqref{031502} since $s < 1$. 
%Note that~(\ref{jan2604}) holds for all $t\ge t_0$, 
%with the constant $C$ that depends 
%only on $t_0$.
%, we see that the rightmost integral in the last line is bounded by a constant multiple of $t^{r + 1/q - 1}$.
%Since $\left(t t^{-(q-qr)}\right)^{1/q} = t^{r+1/q -1}$.
Our next step is to show and use the following inequality:
%\begin{lemma}\label{lem:varadhan}
there exist 
a constant~$C>0$ and $s>1/p$ that both depend on $t_0$, $R$, and $p$ such that
\begin{equation}\label{mar3002}
G(t_0,x,y)^{sp} \leq C G(t_0,0,y),
\end{equation}
for all $y\in\Rm^n$  and $|x|\le R$. 
%\end{lemma}

Before proving \eqref{mar3002}, we shall conclude the proof of \Cref{p:harnack}. Using~\eqref{mar3002} in
\eqref{jan2604} gives
\begin{equation}\label{mar3006}
u(t,x)\leq C \|u_0\|_\infty^{1/q} \left( \int_{\R^n} u(t-t_0,y) G(t_0,0,y) dy\right)^{1/p}
= C \|u_0\|_\infty^{1/q} u(t,0)^{1/p},
\end{equation}
which is (\ref{mar3004}) with $y=0$.

%\begin{proof}[{\bf Proof of \Cref{lem:varadhan}}]
To establish (\ref{mar3002}), we 
choose $s\in(0,1)$, $\eps>0$ and $\theta\in(0,1)$ such that  
\begin{equation}\label{e:rp}
sp(1 - \eps) > 1+\eps,
\end{equation}
and 
\[ 
1-\theta = \frac{(1+\eps)}{sp(1-\eps)}.
\]
%It is here that the standard heat kernel bounds fail.  Indeed, using the bounds 
%from~\cite{Aronson}, the ration $(1+\eps)/(1-\eps)$, which can be made as close to 
%$1$ as one wishes, would be replaced by a fixed constant.  In this case, the 
%possible values of $p$ are limited by the fact that $r < 1$ must hold.
%First we reduce the problem to small $t$.  We fix $\eps > 0$ to be determined 
%later, and 
We may now use Theorem~\ref{p:heat_kernel} to choose $t_0$ small enough so that
\begin{equation}\label{e:heat_kernel_application}
\begin{split}
	& - 4t_0 \log G(t_0,x,y) \geq (1 - \eps) d_A(x,y)^2 -  \eps,\\
	& - 4t_0 \log G(t_0,x,y) \leq (1+\eps) d_A(x,y)^2 + \eps,
\end{split}
\end{equation}
for all $x,y\in\Rm^n$.
%Since we may consider instead $\tilde u(s, x) = u(s + t - t_1, x)$ and $\tilde 
%u_0(x) = u_0(t - t_1, x)$, we may assume that $t_0 \leq t \leq t_1$ without loss of 
%generality.
%
%Second, we 
%Note that, by translating $y$ to the origin, it suffices 
%to bound~$u(t,0)$ in terms of $u(t,x)$ under the assumption that $|x| \leq R$.
%Returning to our proof, by \eqref{e:rp} we may choose $\theta \in (0,1)$ such that
%\[ 
%	1-\theta = \frac{(1+\eps)}{rp(1-\eps)}
%\]
Using \eqref{e:heat_kernel_application} and the triangle inequality
\[
d_A(x,y) \geq |d_A(x,0) - d_A(0,y)|,
\]
we get
\[
\begin{split}
\log [G(t_0,x,y)^{sp}] - \frac{sp\eps}{4t_0}
&\leq - sp (1-\eps) \frac{d_A(x,y)^2}{4t_0} \\
&\leq - sp (1-\eps) \frac{d_A(x,0)^2 - 2 d_A(x,0)d_A(y,0) + d_A(y,0)^2}{4t_0}.
\end{split}
\]
Young's inequality  
%\[
%	2 rp(1-\eps) d_A(x,0)d_A(y,0)
%		\leq \frac{rp(1-\eps)}{\theta} d_A(x,0)^2 + rp(1-\eps) \theta d_A(y,0)^2,
%\]
yields that
\[
\log [G(t_0,x,y)^{sp}] - \frac{sp\eps}{4t_0}
\leq \left(\frac{1}{\theta} - 1\right)
\frac{sp (1-\eps) d_A(x,0)^2}{4t_0} - 
\frac{sp(1-\eps)(1-\theta) d_A(y,0)^2}{4t_0}.
\]
Using the definition of $\theta$ and that the Euclidean metric and $d_A$ are equivalent, we deduce
\[ 
\log [G(t_0,x,y)^{sp}] - \frac{sp\eps}{4t_0}
\leq  \frac{CR^2}{t_0} - \frac{(1+\eps) d_A(y,0)^2}{4t_0},
\]
with a constant $C>0$ that depends on $\theta$, $p$ and $\eps$.
Applying the bounds in \eqref{e:heat_kernel_application} again, we obtain
\[
\log [G(t_0,x,y)^{sp}] - \frac{sp\eps}{4t_0}
		\leq \frac{CR^2}{t_0} + \log G(t_0,0,y) + \frac{\eps}{4t_0}.
\]
Exponentiating, we get (\ref{mar3002}),
%\[
%G(t_0,x,y)^{rp} \leq C G(t_0,0,y),
%\]
%with a constant $C$ that now depends on $t_0$, $R$, $r$, $p$, and $\eps$.
%Plugging this back into (\ref{jan2604})
%and absorbing the terms depending on $t$ into a universal constant, we have
%\[\begin{split}
%u(t,x)
%&\leq C \|u_0\|_\infty^{1/q} \left( \int u_0(y) G(t,0,y) dy\right)^{1/p}
%\leq C \|u_0\|_\infty^{1/q} u(t,0)^{1/p},
%\end{split}\]
finishing the proof.

%\end{proof}

\end{proof}

\section{A reduction to the local cane toads problem}\label{sec:reduction}

In this section, we show how to compare solutions of the non-local cane toads 
equation to the solutions of  a local
cane toads problem, of a more general form than (\ref{nov1004}).  
%That is, Theorem \ref{p:toads} can be reduced to Theorem~\ref{p:kpp}.
To do this, we use Theorem~\ref{p:harnack} to 
eliminate the non-local term in~(\ref{nov1006}).  
This will allow us to find two local cane toads equations   to which the 
solution of~(\ref{nov1006}) is a sub- and super-solution, respectively.

It has been shown in \cite{Turanova},  that solutions of  \eqref{nov1006} satisfy a uniform bound
\begin{equation}\label{aug3004}
n(t,x,\theta)\le M
\end{equation}
for all $(t,x,\theta)\in[0,\infty)\times\R\times\Theta$ with a constant $M$ depending only on $\underline\theta$ and $\overline \theta$.  With this in hand, we first show that we may bootstrap \Cref{p:harnack} to hold for $n$ as well.
\begin{proposition}\label{p:toads_harnack}
For any $t_0>0$, $R>0$, and $p >1$, 
there is a constant $C>0$  such that if
$t \geq t_0$ and $|\theta - \theta'| + |x-x'| \leq R$, and $n$ is a solution of~\eqref{nov1006}-\eqref{nov1012}, then
\begin{equation}\label{aug3006}
	n(t,x,\theta) \leq C n(t,x',\theta')^{1/p}.
\end{equation}
\end{proposition}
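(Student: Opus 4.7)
The plan is to reduce \Cref{p:toads_harnack} to \Cref{p:harnack} by absorbing the nonlocal term of~(\ref{nov1006}) into a bounded zeroth-order potential. Indeed, the $L^\infty$ bound~(\ref{aug3004}) yields $|V|\le K$ for $V := r(1-\rho)$ and some constant $K = K(r, M, |\Theta|)$, so $n$ satisfies the \emph{linear} equation $n_s - Ln = Vn$ on $(0,\infty)\times\R\times\Theta$ with the Neumann conditions~(\ref{nov1012}), where $Lw := \theta w_{xx} + w_{\theta\theta}$.

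The first step is a gauge comparison. Fix $t \ge t_0$ and let $v$ solve the homogeneous problem $v_s = Lv$ on $[t-t_0,t]\times\R\times\Theta$ with Neumann boundary conditions and initial datum $v(t-t_0,\cdot) = n(t-t_0,\cdot)$. A direct computation shows that both $v - e^{-K(s-(t-t_0))}n$ and $e^{K(s-(t-t_0))}n - v$ are bounded supersolutions of $(\cdot)_s - L(\cdot) = 0$ vanishing at $s = t-t_0$: in each case the source term reduces to a nonnegative multiple of $(K \mp V)n$. The parabolic maximum principle on $\R\times\Theta$ with Neumann boundary then yields
\[
  e^{-K t_0}\, v(t,x,\theta) \;\le\; n(t,x,\theta) \;\le\; e^{K t_0}\, v(t,x,\theta),
\]
reducing matters to a Harnack estimate for $v$.

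The second step applies \Cref{p:harnack} to $v$. Since that theorem is stated on $\R^n$, I would first extend $v$ across the Neumann boundaries $\theta = \underline\theta$ and $\theta = \overline\theta$ by successive even reflections, producing a function $\tilde v$ on $\R^2$ that is $2(\overline\theta - \underline\theta)$-periodic in $\theta$ and solves $\tilde v_s = \tilde a(\theta)\, \tilde v_{xx} + \tilde v_{\theta\theta}$. Here $\tilde a$ is the continuous, piecewise affine reflection of $\theta$; provided $\underline\theta > 0$, it is Lipschitz with $\underline\theta \le \tilde a \le \overline\theta$, so the extended operator is uniformly elliptic with H\"older continuous coefficients and \Cref{p:harnack} applies. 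Using $\|v(t-t_0,\cdot)\|_\infty \le M$ from~(\ref{aug3004}), we obtain, whenever $|x-x'|+|\theta-\theta'|\le R$,
\[
  v(t,x,\theta) \;\le\; C\, M^{1-1/p}\, v(t,x',\theta')^{1/p}.
\]
Chaining this with the first step gives~(\ref{aug3006}) with an adjusted constant depending on $t_0, R, p, M, K, \underline\theta, \overline\theta$.

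The main technical obstacle is the extension in step two: one must verify that the reflected operator on $\R^2$ still satisfies the hypotheses of \Cref{p:harnack} uniformly in $x$. This works because $\underline\theta > 0$ keeps $\tilde a$ uniformly elliptic, and piecewise affineness keeps it Lipschitz (hence H\"older). The gauge comparison in step one is routine given the uniform bound~(\ref{aug3004}) on $n$.
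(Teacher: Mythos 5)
Your proposal is correct and follows essentially the same route as the paper: both sandwich $n$ between exponential-in-time multiples of a solution of the linear heat equation $v_s=\theta v_{xx}+v_{\theta\theta}$ via the comparison principle (using the uniform bound on $\rho$ to control the zeroth-order term), apply \Cref{p:harnack} to that heat solution after the even-reflection extension in $\theta$ (which the paper handles in a footnote citing Turanova), and then chain the two comparisons. The only differences are cosmetic: the paper restarts the heat flow at $t_1-\delta$ with $\delta=\min\{1,t_0/2\}$ and writes the comparison as $e^{(1-M|\Theta|)t}h\le n\le e^{t}h$, while you restart at $t-t_0$ and phrase it as a symmetric gauge estimate with constant $K$.
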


\begin{proof}[{\bf Proof of \Cref{p:toads_harnack}}]

The proof is by comparing $n$ to a solution to an associated linear heat equation. Take $t_1\ge t_0$
and let $h$ be the solution to  
\[
h_t = \theta h_{xx} + h_{\theta\theta},
\]
with the Neumann boundary conditions
\[
h_\theta(t,x,\underline \theta)=h_\theta(t,x,\overline\theta)=0,
\]
and the initial condition
\[ 
h(0,x,\theta) = n(t_1 - \delta, x,\theta),  
\]
with $\delta = \min\{1, t_0/2\}$. 
Theorem \ref{p:harnack} implies%
\footnote{Strictly speaking, to apply \Cref{p:harnack}, we need 
$n$ to be defined on $\R^2$, not on $\R\times\Theta$.  This obstacle, however, may be avoided considering a periodic extension of 
$n$ to $\R^2$; see~\cite[Section 2.1]{Turanova} for more details.} 
that there is a constant $C$ depending 
only on $M$, $\delta$,~$R$ and $p$ such that, for any $|x-x'| \leq R$ and $\theta \in [\underline\theta,\overline\theta]$, we have
\[
h(t,x,\theta) \leq C h(t,x',\theta)^{1/p},
\]
for all $t\ge\delta$.

On the other hand, as 
\[
n(1- M|\Theta|) \leq n(1 - \rho) \leq n,
\]
the comparison principle implies that 
\[
e^{(1-M|\Theta|)t} h(t,x)\le n(t_1-\delta+t,x)\le  e^{t}h(t,x).
\]
Hence, we may pull the Harnack inequality from $h$ to $n$: 
for all $(x,\theta) \in \R\times\Theta$ and $(x',\theta')\in \R\times \Theta$ such that $|x-x'|+|\theta-\theta'|\le R$ we have
\[
n(t_1,x,\theta) \leq e^\delta h(\delta,x,\theta)
\leq C e^\delta h(\delta,x',\theta')^{1/p}
\leq C e^{\delta} \left( e^{(M|\Theta|-1)\delta} n(t_1,x',\theta') \right)^{1/p}.
\]
This finishes the proof.

\end{proof}
  
We now construct two local cane toads
problems for which $n$ is a sub- and super-solution. 
We fix~$p \in (1, 3/2)$ and find $C>0$ so that we may apply Proposition~\ref{p:toads_harnack} 
with $t_0 = 1$ and $R = |\Theta|$, to obtain (after integration) 
\[
	\frac{n(t,x,\theta)^p}{C^p} \leq \rho(t,x) \leq C n(t,x,\theta)^{1/p}.
\]
for all $t\geq 1$, $x\in \R$ and $\theta\in\Theta$.  It follows that
\begin{equation}\label{aug3002}
	n\left(1 - C n^{1/p}\right)
		\leq n( 1- \rho)
		\leq n\left( 1 - \frac{n^p}{C^p}\right).
\end{equation}
This implies that for $t\ge 1$ the function $n(t,x,\theta)$ 
is a super-solution to the equation
\begin{equation}\label{e:toads_lower}
\underline u_t - \theta \underline u_{xx} - \underline u_{\theta\theta} =  \underline u ( 1 -C \underline u^{1/p}),  
\end{equation}
and a sub-solution to the equation
\begin{equation}\label{e:toads_upper}
	\overline u_t - \theta \overline u_{xx} -  \overline u_{\theta\theta} = \overline u \left( 1 - C^{-p} \overline u^p\right).
\end{equation}
Here, $\underline u$ and $\overline u$ satisfy 
the same Neumann boundary conditions (\ref{nov1012}) as $n$.  
%Note that the nonlinearities in \eqref{e:toads_lower} and \eqref{e:toads_upper} satisfy the requirements in \eqref{e:f}.

We now choose the initial conditions at $t=0$:
$\underline u_0(x,\theta)=\underline u(0,x,\theta)$ and $\overline u_0(x,\theta)=\overline u(0,x,\theta)$,  
so that the ordering 
\begin{equation}\label{aug3016}
\underline u(t=1,x,\theta) \leq n(t=1,x,\theta) \leq \overline u(t=1,x,\theta)
\end{equation}
holds for all $x$ and $\theta$.  
This will guarantee that
\begin{equation}\label{e:ordering}
	\underline u(t,x,\theta) \leq n(t,x,\theta) \leq \overline u(t,x,\theta)
\end{equation}
for all $t \geq 1$ and all $x$ and $\theta$, because of (\ref{aug3002}). 
We only describe how $\underline u_0$ is chosen, but the process is similar for $\overline u_0$.  

To this end, let $h$ be a solution to the equation
\begin{equation}\label{aug3010}
h_t - \theta h_{xx} -  h_{\theta\theta} = 0,
\end{equation}
with the initial condition $h_0(x,\theta)=n_0(x,\theta)$.
 %Fix a positive constant $a$ to be determined and we define $\underline u_0$ and $h_0$ that
%\[
%	\underline u_0 = ah_0.
%\]
Define the function $\underline h = e^{(1-M|\Theta|)t}h$, which satisfies
\begin{equation*}
\underline h_t = \theta \underline h_{xx} + \underline h_{\theta \theta} + \left(1 - M \vert \Theta \vert \right) \underline h,
\end{equation*}
where $M$ is the upper bound for $n$ from~\eqref{aug3004}.  Notice that $n$ is a super-solution to $\underline h$.
%Thus defining the function
%\begin{equation}\label{eq:h_barsbis}
%\underline h=e^{(1-M|\Theta|)t} h, 
%\end{equation}
%is a sub-solution for the equation  (\ref{nov1006})  for $n$. 
%where $M$ is the upper bound for $n$, as in (\ref{aug3004}),
%Thus, 
% Using~\eqref{eq:h_bars} and the comment following it, we have that, for all $(x,\theta)\in\R\times\Theta$,
Hence
\begin{equation}\label{aug3014}
n(t=1,x,\theta) \geq \underline h(t=1,x,\theta)
= e^{\left(1 - M \vert \Theta \vert \right)} h(t=1,x,\theta),
\end{equation}
for all $x$ and $\theta$. 
On the other hand,  for any $a>0$, the function
\begin{equation}\label{eq:h_bars}
\overline h=a e^{t}h
\end{equation}
is
a super-solution for the equation for $\underline u$~(\ref{e:toads_lower}).  Hence, if $\underline u$ is the solution of (\ref{e:toads_lower}) with the
initial condition $\underline u_0 =an_0 $, then 
\begin{equation}\label{aug3012}
\underline u(t=1,x,\theta) \leq  a e h(t=1,x,\theta).
\end{equation}
Putting (\ref{aug3014}) and (\ref{aug3012}) together gives us
\[
\underline u(t=1,x,\theta) \leq a e^{M|\Theta|} n(1,x,\theta)
\]
for all $x$ and $\theta$. Thus, if we choose $a = \exp(- M|\Theta|)$ then the first inequality 
in (\ref{aug3016}) holds. Similarly, we may choose an initial condition $\bar u_0$ 
so that the second inequality in (\ref{aug3016}) holds as well.

\section{The logarithmic correction in the local cane toads fronts}\label{sec:log-proof}

%\subsubsection*{Delay for the Fisher-KPP equation}

We have shown that there exist functions $\underline{u}$ 
and $\overline u$, satisfying the local cane toads  
equations (\ref{e:toads_lower}) and~(\ref{e:toads_upper}), respectively, such that 
the solution $n$ of (\ref{nov1006})-(\ref{nov1012}) satisfies the lower and upper bounds 
in (\ref{e:ordering}). Therefore,  Theorem~\ref{p:toads} is
a consequence of the corresponding result for the Fisher-KPP equations. We present the local
Fisher-KPP result in a slightly greater generality than what is needed for Theorem~\ref{p:toads},
as the extra generality introduces no extra complications in the proof.

%Since we may prove a more general result than the logarithmic delay for the local version of the cane toads equation with little added effort, we do so.  Hence, we look at a broader class of Fisher-KPP equations in the sequel.  We note that we use $n$ for solutions to the non-local equation and we use $u$ for solutions to the local equation.
%
Let $D$ be a uniformly positive and bounded function on a smooth domain $\Theta \subset \R^d$, and let $A$ be a   
$C^1$ function on $\Theta$.  Let  
$u$ be the solution to the Fisher-KPP equation
\begin{equation}\label{e:local_toads}
u_t - D u_{xx} -  \Delta_\theta u + A u_x = f(u), 
%~~\text{ for all $t>0$, $x\in\Rm$ and $\theta\in\Theta$,}% (t,x) \in \R^+\times \Theta,
\end{equation}
with the Neumann boundary conditions:
\begin{equation}\label{aug3020}		
\pdr{u}{\nu_\theta} (t,x, \theta) =  0,% \qquad \text{ for all $t \in \R^+$, $x\in\Rm$ and $\theta\in\partial\Theta$}, % (t,x)\in\R^+ \times \R\\
 \end{equation}
and the initial condition $u(0,\cdot) = u_0$. Here, $\nu_\theta$ is the normal to $\partial\Theta$.
We assume that
\begin{eqnarray}\label{aug3026}
 \liminf_{x\to-\infty} u_0(x,\theta) > 0,
\end{eqnarray}
uniformly in $\theta\in\Theta$, that $u_0 \geq 0$, and that there is some $x_0$ such that $u_0(x,\theta) = 0$ for all $x \geq x_0$.
%\begin{eqnarray}\label{aug3030}
%&&\hbox{$u_0 \ge 0$, and}\nonumber\\
%&&\hbox{there is some $x_0$ such that $u_0(x,\theta) = 0$ for all $x \geq x_0$ and $\theta\in\Theta$.}
%\end{eqnarray}
The nonlinearity $f$ is of the Fisher-KPP type:
there exist $u_m>0$, $M>0$ and $\delta > 2/3$ such that 
\begin{equation}\label{aug3028}
\hbox{$f(0) = f(u_m) = 0$, \qquad $f(u) > 0$ \qquad for all~$u \in [0,u_m]$,}
\end{equation}
and
\begin{equation}\label{e:f}
u - M_\delta u^{1+ \delta} \leq f(u) \leq u, ~~\text{ for all } u \in [0,u_m].
\end{equation}

A classical result of Berestycki and Nirenberg~\cite{BerestyckiNirenberg} shows that (\ref{e:local_toads}) admits travelling wave solutions
of the form $u(t,x,\theta)=\Phi(x-ct,\theta)$, with $\Phi(x,\theta)$ such that
\begin{equation}\label{aug3022}
-c\Phi_x- D \Phi_{xx} -  \Delta_\theta\Phi  + A \Phi_x = f(\Phi), ~~%\text{ for all  $x\in\Rm$ 
%and $\theta\in\Theta$,}% (t,x) \in \R^+\times \Theta,
\end{equation}
and $\Phi(-\infty,\cdot)=u_m$, and $\Phi(+\infty,\cdot)=0$. In addition, $\Phi$ satisfies the Neumann boundary conditions~(\ref{aug3020}), and
$0<\Phi(x,\theta)<u_m$ for all $x$ and $\theta$. Such travelling waves exist for all $c\ge c^*$, with the same $c^*$ as in Theorem~\ref{p:toads},
and the travelling wave corresponding to the minimal speed has the asymptotics
\[
\Phi(\xi,\theta)\sim \alpha \xi e^{-\lambda^*\xi} Q(\theta),~~\hbox{ as $x\to+\infty$},
\]
with the same exponential decay rate $\lambda^*$ and profile $Q$ as in \eqref{aug2916}. 
Once again, a precise description of $c^*$ and $\lambda^*$ in terms of an eigenvalue problem will be given in 
Section~\ref{sec:spectral}. What is important for us is that, as far as the function $f$ is concerned,
both $c^*$ and $\lambda^*$ depend only on $f'(0)$ but not, say,  on $u_m$ or $\delta$.

By translating and scaling and by changing to a constant speed moving reference frame, if necessary, we may assume without loss of generality that $u_m = 1$, $f'(0)=1$, 
that the drift $A$ has mean-zero, and, finally, that the initial condition~$u_0$ is not identically equal to zero on the half-cylinder $\{x>0,~\theta\in\Theta\}$.

\begin{theorem}\label{p:kpp}
Suppose that $D$ and $A$ are as above and $f$  satisfies \eqref{aug3028}-\eqref{e:f}. 
There exist $c^*>0$ and $\lambda^*>0$  that, as far as  $f$ is concerned,
depend only on~$f'(0)$, with the following
property. Let  $u$ satisfy \eqref{e:local_toads}-\eqref{aug3020}, with the initial condition
$u_0$ as above~\eqref{aug3026}.
Then, for any $m \in (0,u_m)$, there exist $x_m>0$ and $T_m>0$, depending on $m$, such that if $t \geq T_m$ we have
\begin{equation}\label{aug3102}
\{x \in \R \, : \, \exists \theta \in \Theta, \,u(t,x,\theta) = m\}
\subset \left[ c^* t - \frac{3}{2\lambda^*}\log(t) - x_m,
c^* t - \frac{3}{2\lambda^*}\log(t) + x_m\right].
\end{equation}
%Here $c^*$ and $\lambda^*$ are related to the travelling wave solution and are defined in~\Cref{sec:spectral}.
\end{theorem}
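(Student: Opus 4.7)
The plan is to follow the broad PDE strategy of~\cite{HNRR12}, split into an upper-bound piece, coming from the linearized equation, and a lower-bound piece, coming from a truncated Dirichlet comparison in a frame moving at the critical speed with the Bramson logarithmic correction. The starting point is the spectral characterization of $c^*$ and $\lambda^*$ promised in Section~\ref{sec:spectral}: one looks for separated solutions $e^{-\lambda(x-c(\lambda)t)}Q_\lambda(\theta)$ of the linearized equation $w_t = Dw_{xx} + \Delta_\theta w - Aw_x + w$ with Neumann conditions on $\partial\Theta$. This gives, for each $\lambda>0$, a principal eigenvalue $k(\lambda)$ of a selfadjoint Sturm–Liouville problem in $\theta$ with a uniformly positive eigenfunction $Q_\lambda$, and then $c(\lambda)=k(\lambda)/\lambda$, $c^*=\min_\lambda c(\lambda)=c(\lambda^*)$, with $Q:=Q_{\lambda^*}$ uniformly positive on $\overline\Theta$. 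I work throughout in the moving frame $\xi=x-c^*t+\tfrac{3}{2\lambda^*}\log t$.

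For the upper bound, using $f(u)\le u$ the solution $u$ is a subsolution of the linear equation. Building on the critical exponential mode, I would try the supersolution
\[
\bar w(t,x,\theta)=t^{-3/2}(\xi+B)\,e^{-\lambda^*\xi}\,Q(\theta)\,\mathbf{1}_{\{\xi>-B\}},
\]
with $B$ a large constant (suitably mollified so the cutoff at $\xi=-B$ and the Neumann conditions in $\theta$ cause no trouble). A direct computation using $k(\lambda^*)=\lambda^*c^*$ and $k'(\lambda^*)=c^*-1/\lambda^*$ (the minimality of $c^*$ at $\lambda^*$) shows that the bulk equation is satisfied with a favorable sign; choosing $x_m$ large guarantees $\bar w(T_0,\cdot)\ge u(T_0,\cdot)$ for some $T_0$, using the $L^\infty$ bound and compact right-support of $u_0$, and the comparison principle yields the upper half of~\eqref{aug3102}.

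For the lower bound I would argue in two stages. First, on a large but finite window $\xi\in[-L,L]$, consider the Dirichlet-in-$\xi$, Neumann-in-$\theta$ principal eigenvalue $\mu_L$ of the linearized operator written in the moving frame; a standard expansion shows $\mu_L\to 0$ like $L^{-2}$ as $L\to\infty$, with eigenfunction asymptotically of the form $\cos(\pi\xi/(2L))\cdot\xi e^{-\lambda^*\xi}Q(\theta)$ up to lower-order corrections. Second, I need a matching pointwise lower bound at the tip: there exists $c_0>0$ such that for all $A$ large and $t$ large,
\[
u(t,c^*t-A,\theta)\ge c_0\,t^{-3/2}\,Q(\theta),
\]
which is precisely the decay estimate that I would prove in Section~\ref{s:p_decay}. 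Once this is in hand, a small multiple of the Dirichlet eigenfunction above, multiplied by $t^{-3/2}$, serves as a subsolution on the window, provided the KPP nonlinearity can be absorbed; here the bound $f(u)\ge u-M_\delta u^{1+\delta}$ with $\delta>2/3$ is exactly what is needed to make $u^\delta$ decay faster than the linear prefactor $\xi e^{-\lambda^*\xi}$ so that the correction can be swallowed by the $\mu_L\sim L^{-2}$ margin. Propagating this lower bound by the Harnack inequality along the moving frame finishes the lower half of~\eqref{aug3102}.

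The main obstacle I anticipate is the tip decay estimate of Section~\ref{s:p_decay}. The presence of the $\theta$-dependent diffusion $D(\theta)$ and the drift $A(\theta)$ couples $x$ and $\theta$ in a non-separable way, so one cannot simply project onto the principal mode $Q(\theta)$ and run the one-dimensional Bramson argument. The natural approach is to change variables so that the linearized operator in the moving frame becomes a perturbation of the selfadjoint operator whose principal eigenvalue/eigenfunction are $(0,Q)$, perform a spectral decomposition, and control the subprincipal part using parabolic regularity together with the present-time Harnack inequality~\Cref{p:toads_harnack} (applied to the comparison functions $\underline u$, $\bar u$). The final $O(1)$ correction, rather than $o(1)$, arises precisely because one has only a spectral gap, not the tight heat-kernel asymptotics that produce the $o(1)$ term in the scalar Bramson theorem.
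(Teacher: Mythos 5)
Your upper-bound construction does not work as stated. Take the scalar case $D\equiv 1$, $A\equiv 0$, $Q\equiv 1$, $c^*=2$, $\lambda^*=1$ and drop for a moment the prefactor: with $\xi=x-2t+\frac32\log t$ and $\bar w=(\xi+B)e^{-\xi}$, a direct computation gives $\bar w_t-\bar w_{xx}-\bar w=\frac{3}{2t}\,(1-\xi-B)\,e^{-\xi}$, which is \emph{negative} for $\xi>1-B$; the log-shifted linear profile is a subsolution in the bulk, not a supersolution, so the "favorable sign" you invoke is the opposite of what the computation yields (this is precisely why Bramson upper bounds require either a Gaussian-type correction or the solution of the linearized Dirichlet problem on the moving boundary, as in \eqref{sep2812}). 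In addition, your amplitude is off: since the $\frac{3}{2\lambda^*}\log t$ shift is already inside $\xi$, the extra factor $t^{-3/2}$ makes $\bar w=O(t^{-3/2})$ at bounded $\xi$ (indeed $t^{-3/2}e^{-\lambda^*\xi}=t^{-3}e^{-\lambda^*(x-c^*t)}$), whereas $u$ is of order one at bounded distance behind the Bramson front, so $\bar w\ge u$ cannot hold for large times. Finally, truncating to zero at $\xi=-B$ destroys the comparison at the left edge, where $u>0$; the paper instead glues $\min(1,Mz)$ with the constant $1$, and the gluing is legitimate only because one knows $Mz\ge 2$ at unit distance from the Dirichlet boundary --- information that comes from \Cref{p:p_decay} applied after the time change and the factorization by $\alpha(\tau)e^{-\lambda^*(x-c^*\tau)}Q^*$, not from an explicit algebraic supersolution. (A small slip in the same vein: minimality of $c(\lambda)=k(\lambda)/\lambda$ at $\lambda^*$ gives $k'(\lambda^*)=c^*$, not $c^*-1/\lambda^*$.)

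For the lower bound there is also a genuine gap at the final step. Your plan produces a bound of size $t^{-3/2}$ near $x=c^*t$ and then asks Harnack to "propagate along the moving frame"; but Harnack-type comparability only holds over bounded distances with fixed constants, and to reach $u\ge m$ at distance $\frac{3}{2\lambda^*}\log t+O(1)$ behind $c^*t$ you must traverse a region of length $\log t$, so the constants degrade and no level-set conclusion follows. The paper's mechanism is different and is the essential idea you are missing: first one multiplies the solution of the linearized Dirichlet problem with boundary at $c^*t$ (no logarithm) by a modulation $a(t)$ solving $-\dot a/a=CM_\delta(t+1)^{-3\delta/2}$ --- this, not a window eigenvalue margin, is where $\delta>2/3$ enters --- to get a true subsolution and hence $u\gtrsim e^{-\sigma\sqrt t}/t$ at $x=c^*t+\sigma\sqrt t$; then one slides a travelling wave for the modified nonlinearity $f(u)(1-u/\overline m)$, shifted by $s(t)=\frac{3}{2\lambda^*}\log(1+t)$, under $u$ on the half-line $x\le c^*t+\sigma\sqrt t$, using the wave tail asymptotics \eqref{e:tw_asymptotics} to check the ordering at the right boundary; the $m$-level set of this wave then yields the left endpoint in \eqref{aug3102}. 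You would need this (or an equivalent sliding device). Moreover, both halves of your argument rest on the tip decay estimate, which is exactly \Cref{p:p_decay}; you only sketch a spectral-perturbation plan for it, whereas its proof (self-adjoint reformulation using \eqref{rel3}, the Nash inequality on half-cylinders plus duality, the multi-scale approximate solutions, and the weighted estimates of Section~\ref{s:p_decay}) constitutes the technical core of the paper, so the proposal cannot be counted as a proof of \eqref{aug3102} as it stands.
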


Theorem~\ref{p:toads} follows from Theorem~\ref{p:kpp} and the bounds on $n$ 
in (\ref{e:ordering}), in terms of the solutions of the Fisher-KPP equations  (\ref{e:toads_lower}) and (\ref{e:toads_upper}).
The reason is that  $c^*$ and $\lambda^*$ for the two non-linearities in  (\ref{e:toads_lower}) and (\ref{e:toads_upper}) coincide,
hence the level sets of the corresponding solutions $\underline u$ and $\overline u$ of these two equations
 stay within $O(1)$ from each other, and (\ref{e:ordering}) means that so do the level sets of the solution of (\ref{nov1006}).
%
%hat bound $n$ below and above, respectively, and which satisfy the conditions of \Cref{p:kpp}.  
%Thus, if we prove the logarithmic delay 
%for equations of the type \eqref{e:local_toads}, 
%then we may conclude the logarithmic delay for $n$.   

%In \Cref{reduction}, we reduce \Cref{p:toads} to \Cref{p:kpp}.  Then we prove \Cref{p:kpp} in SECTIONS ???.

%In general, the speed of propagation in the local Fisher-KPP type equations is determined by the linearization  around zero, 
%that is, with the terms~$u(1-u)$ in (\ref{nov1004}) and $n(1-\rho)$ in 
%(\ref{nov1006}) replaced by $u$ and $n$, respectively.
%Since the linearizations of these equations are identical, one may expect both models to have the same propagation
%speed.
%

The proof of Theorem \ref{p:kpp}
mostly follows the strategy of \cite{HNRR12} where a similar result has been proved
in the one-dimensional periodic case.  A general multi-dimensional form of the Bramson
shift is a delicate problem~\cite{Shabani}. However, the particular 
form of the  present problem allows us to streamline many of the 
details and modifies some of the steps in the proof.  
%We give a more succinct treatment of the proof than in \cite{HNRR12}.  
%An interested reader may consult this reference for the extra details. 
%The overall strategy, however, is the same. 
Typically, the spreading speed  $c_*$
of the solutions of the Fisher-KPP type equations can be inferred from the linearized problem,
that in the present case takes the form 
\begin{equation}\label{mar1802}
u_t+A u_x=D u_{xx}+ \Delta_\theta u+f'(0)u.
\end{equation}
%It is explained in Section~\ref{sec:spectral} how $c_*$ can be inferred from 
%(\ref{mar1802}) using the exponential solutions.
The main qualitative difference between the solutions of (\ref{mar1802}) and those of the nonlinear Fisher-KPP
problem is that the former grow exponentially in time on any given compact set, while the latter remain bounded.
A remedy for that discrepancy is to consider (\ref{mar1802}) in a domain with a moving boundary:
$x>X(t)$, with 
\begin{equation}\label{aug3104}
X(t)=c^*t-r(t),
\end{equation}
with the Dirichlet boundary condition $u(t,X(t),\theta)=0$. Then the shift $r(t)$ is chosen so that the solutions of the moving boundary
problem remain $O(1)$ as $t\to+\infty$. It turns out that such ``correct" shift is exactly 
\begin{equation}\label{aug3106}
r(t)=\farc{3}{2\lambda^*}\log t,
\end{equation}
as in (\ref{aug3102}). 
This allows to use them as sub- and super-solutions to the nonlinear Fisher-KPP equation, to
prove that the front of the solutions to (\ref{e:local_toads}) is also located at a distance $O(1)$ from~$X(t)$ given by (\ref{aug3104})-(\ref{aug3106}), which is the claim
of Theorem~\ref{p:kpp}.

\subsection{The eigenvalue problem defining $c^*$ and $\lambda^*$.}\label{sec:spectral}

Let us first recall from~\cite{BerestyckiNirenberg} how $c^*$ and $\lambda^*$ are defined in Theorems~\ref{p:toads} and \ref{p:kpp}.
We look for exponential solutions of the linearized cane toads equation (\ref{mar1802}), with $f'(0)=1$, of the form
 \begin{equation}\label{mar1804}
u(t,x,\theta)=e^{-\lambda(x-ct)}Q_\lambda(\theta).
\end{equation}
This leads to
the following spectral problem  
on the cross-section~$\Theta$ for the unique
positive
eigenfunction $Q_\lambda>0$:
\begin{equation}\label{mar1806}
\left\{ 
\begin{array}{ll}
\Delta_\theta Q_\lambda + ( \lambda^2 D  + 
\lambda A  -\lambda c(\lambda) + 1)Q_\lambda(\theta)= 0,& \qquad  \hbox{ in $\Theta$}, \medskip\\
\pdr{Q_\lambda}{\nu_\theta} =    0,& \qquad \hbox{ on $\partial\Theta$.}\nonumber
\end{array}
\right.
\end{equation}
We will use the normalization
\begin{equation}\label{mar1808}
\int_\Theta Q_\lambda(\theta) d\theta = 1.
\end{equation}
In other words, given $\lambda>0$, 
we solve the eigenvalue problem
\begin{equation}\label{mar1810}
\left\{ 
\begin{array}{ll}
\Delta_\theta Q_\lambda + ( \lambda^2 D  + 
\lambda A  )Q_\lambda(\theta)= \mu(\lambda)Q_\lambda,& \qquad  \hbox{ in $\Theta$,} \medskip\\
\pdr{Q_\lambda}{\nu_\theta} =    0,& \qquad  \hbox{ on $\partial\Theta$.}
\end{array}
\right.
\end{equation}
It has a unique positive eigenfunction $Q_\lambda$ corresponding to its principal
eigenvalue $\mu(\lambda)$ -- this is a standard consequence of the Krein-Rutman
theorem.  The positivity of $\mu(\lambda)$ easily follows by dividing~\eqref{mar1810} 
by $Q_\lambda$, integrating, and using the positivity  of $Q_\lambda$ and the boundary conditions,  
along with the normalization 
\[
\int_\Theta A(\theta)d\theta = 0.
\]  
Then, the speed $c(\lambda)$ is determined by
\begin{equation}\label{eq:mu}
\mu(\lambda)=\lambda c(\lambda)-1,
\end{equation}
that is,
\begin{equation}\label{mar1812}
c(\lambda)=\farc{1+\mu(\lambda)}{\lambda}.
\end{equation}
We will use the notation, well-defined by the following proposition,
\begin{equation}\label{mar1814}
c^*=\min_{\lambda>0}c(\lambda),~~
\lambda^* = \underset{\lambda > 0}{\text{argmin} } \, c(\lambda),
\end{equation}
and denote by $Q^*$ the corresponding eigenfunction.
%We will need below some properties of $Q_{\lambda}$ 
%and some relations between $c^*$ and $\lambda^*$. 
\begin{prop}\label{propspec}
The function $\lambda \mapsto c(\lambda)$ has a minimum $c^*$, and
%
%
%\begin{enumerate}
%
%\item[(i)] {For all $\lambda > 0$,} the profile $Q_\lambda$ is increasing {w.r.t $\theta$}. {There exists $\theta_0$ such that} $Q_\lambda$ is convex on $\left[ \theta_{min} , \theta_{0} \right]$ and concave on $\left[ \theta_0 , \theta_{max} \right]$. Moreover, $\theta_0$ satisfies $- \lambda c(\lambda) + \lambda^2 \theta_0 + r = 0$
%
%\item[(ii)] We define $\left\langle \theta_\lambda \right\rangle:=  \int_\Theta \theta Q_\lambda (\theta) d \theta$, the mean trait associated to the {decay} rate $\lambda$. {We also define $\left\langle \theta^* \right\rangle:=  \left\langle \theta_{\lambda^*} \right\rangle$}. One has
%\begin{equation}\label{rel1}
%\forall \lambda > 0, \qquad - \lambda c(\lambda) + \lambda^2 \left\langle \theta_\lambda \right\rangle + r = 0, \qquad \left\langle \theta_\lambda \right\rangle > \frac{\theta_{max} + \theta_{min}}{2}.
%\end{equation}
%
%\item[(i)] We have the following relation between $c^*, \lambda^*, Q^*$:
\begin{equation}\label{rel3}
c^* {\int_{\Theta} \left( Q^* \right)^2 d \theta}= 
 {\int_{\Theta} [2 \lambda^*D(\theta)+A(\theta)] \left( Q^* \right)^2 d \theta}.
\end{equation}
 Further, we have $c''(\lambda^*) > 0$.
%and 
\end{prop}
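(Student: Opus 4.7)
The plan is to exploit that $\mu(\lambda)$ is the principal eigenvalue of the self-adjoint family
\[
L_\lambda := \Delta_\theta + \bigl(\lambda^2 D(\theta) + \lambda A(\theta)\bigr)
\]
on $L^2(\Theta)$ with Neumann boundary conditions. Krein-Rutman gives simplicity, and standard analytic perturbation theory then gives that $\lambda \mapsto \mu(\lambda)$ and $\lambda \mapsto Q_\lambda$ are real-analytic on $(0,\infty)$.

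First I would establish the existence of a minimum by showing $c(\lambda) \to +\infty$ at both ends of $(0,\infty)$. Near $\lambda = 0$, the Neumann principal eigenvalue of $\Delta_\theta$ vanishes (constants are eigenfunctions), so $\mu(\lambda) \to 0$ and $c(\lambda) = (1+\mu(\lambda))/\lambda \to +\infty$. For $\lambda \to +\infty$, plugging the constant function into the Rayleigh quotient and using that $\int_\Theta A = 0$ yields $\mu(\lambda) \geq \lambda^2 \bar D$ with $\bar D > 0$, so $c(\lambda) \geq \lambda \bar D + O(\lambda^{-1}) \to +\infty$. Continuity then gives a minimizer $\lambda^* \in (0,\infty)$.

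For the identity \eqref{rel3}, I would compute $\mu'$ via the Hellmann-Feynman formula. Differentiating \eqref{mar1810} in $\lambda$, taking the $L^2$ inner product with $Q_\lambda$, and using self-adjointness of $L_\lambda$ to cancel the terms containing $\partial_\lambda Q_\lambda$, one obtains
\[
\mu'(\lambda) \int_\Theta Q_\lambda^2 \, d\theta = \int_\Theta (2\lambda D + A) Q_\lambda^2 \, d\theta.
\]
Next, differentiating $\lambda c(\lambda) = 1 + \mu(\lambda)$ gives $c(\lambda) + \lambda c'(\lambda) = \mu'(\lambda)$, so the stationarity $c'(\lambda^*) = 0$ translates to $\mu'(\lambda^*) = c^*$, which is precisely \eqref{rel3}.

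For $c''(\lambda^*) > 0$, differentiating $\lambda c = 1 + \mu$ twice yields $2c'(\lambda) + \lambda c''(\lambda) = \mu''(\lambda)$, so at the critical point $\lambda^* c''(\lambda^*) = \mu''(\lambda^*)$. It suffices to show $\mu''(\lambda^*) > 0$, for which I would invoke second-order Rayleigh-Schr\"odinger perturbation theory. With $\dot L_\lambda = 2\lambda D + A$, $\ddot L_\lambda = 2 D$ (multiplication operators), and $\{Q_k\}_{k \geq 0}$ an $L^2$-orthonormal eigenbasis of $L_{\lambda^*}$ with eigenvalues $\mu(\lambda^*) = \mu_0 > \mu_1 \geq \mu_2 \geq \cdots$, one has
\[
\mu''(\lambda^*) = 2 \int_\Theta D\, Q_{\lambda^*}^2 \, d\theta + 2 \sum_{k \geq 1} \frac{\bigl|\langle \dot L_{\lambda^*} Q_{\lambda^*},\, Q_k \rangle\bigr|^2}{\mu_0 - \mu_k}.
\]
The first term is strictly positive since $D$ is uniformly positive, and every summand is non-negative since $\mu_0$ is the largest eigenvalue; hence $\mu''(\lambda^*) > 0$.

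The main obstacle is the last step. Mere convexity of $\mu$ (which follows from it being a supremum of functions affine in $(\lambda, \lambda^2)$) only yields $c''(\lambda^*) \geq 0$, and the strict inequality requires the second-order perturbation formula. This in turn relies on the self-adjointness of $L_\lambda$, which is available here only because the $\lambda$-dependent part is multiplication by a real function of $\theta$ (no $\partial_\theta$-drift).
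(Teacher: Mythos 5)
Your proposal is correct, and its core is the same as the paper's: the identity \eqref{rel3} is obtained exactly as in the paper by differentiating the eigenvalue problem in $\lambda$, pairing with $Q_\lambda$ so the $\partial_\lambda Q_\lambda$ terms drop by self-adjointness (the paper's \eqref{rel4} is your Hellmann--Feynman identity combined with $c+\lambda c'=\mu'$), and the last claim is reduced in both cases to $\lambda^* c''(\lambda^*)=\mu''(\lambda^*)$. You differ in two peripheral places, both legitimately. For coercivity of $c(\lambda)$ you use the Rayleigh quotient (constant test function, $\int_\Theta A=0$) and continuity of $\mu$ near $0$, whereas the paper evaluates the eigenvalue equation at a point $\theta_0$ where $\Delta_\theta Q_\lambda(\theta_0)=0$ (such a point exists since $\int_\Theta \Delta_\theta Q_\lambda\,d\theta=0$), which gives the pointwise formula $c(\lambda)=1/\lambda+A(\theta_0)+\lambda D(\theta_0)$ and hence two-sided bounds; the conclusions are the same. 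For strict positivity of $\mu''(\lambda^*)$, the paper only invokes convexity of $\mu$ from the variational principle and asserts that ``a straightforward computation'' upgrades this to $\mu''(\lambda^*)>0$, while you supply the missing detail via the second-order Rayleigh--Schr\"odinger formula (with $\ddot L=2D$ and all cross terms nonnegative because $\mu_0$ is the top of the spectrum); your observation that bare convexity only gives $c''(\lambda^*)\ge 0$ is exactly the point the paper glosses over. An alternative, equally short route to the same strict inequality: with $\|Q^*\|_{L^2}=1$, the Rayleigh quotient gives $\mu(\lambda)\ge g(\lambda):=\lambda^2\int_\Theta D (Q^*)^2+\lambda\int_\Theta A (Q^*)^2-\int_\Theta|\nabla_\theta Q^*|^2$ with equality at $\lambda^*$, so $\mu-g$ has an interior minimum there and $\mu''(\lambda^*)\ge g''(\lambda^*)=2\int_\Theta D(Q^*)^2>0$. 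Only a trivial caveat: your second-derivative formula presumes the $L^2$ normalization of $Q_{\lambda^*}$, while the paper normalizes $\int_\Theta Q_\lambda=1$; this affects nothing since only the sign is used.
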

{\bf Proof of Proposition \ref{propspec}.}
%We first prove the existence and uniqueness of $(c(\lambda),Q_{\lambda})$ for all positive $\lambda$. {Let $\beta > 0$ and $K$ be the positive cone of nonnegative functions in $\mathcal{C}^{1,\beta}\left( \Theta \right)$}. We define $L$ on $\mathcal{C}^{1,\beta}\left( \Theta \right)$ as below 
%\begin{equation*}
%L (u)= - \alpha u''(\theta) - \left(  \lambda^2 d(\theta) + \lambda a(\theta)  - \max_\Theta \left( \lambda^2 d(\theta) + \lambda a(\theta) \right)\right) u (\theta).
%\end{equation*} 
%
%The resolvent of $L$ {endowed} with the Neumann boundary condition is compact from the regularizing effect of the Laplace term. Moreover, the strong maximum principle and the boundedness of $\Theta$ gives that it is strongly positive. Using the Krein-Rutman theorem we obtain that there exists a nonnegative eigenvalue $\gamma(\lambda)^{-1} $, corresponding to a positive eigenfunction $Q_\lambda$. This eigenvalue is simple and none of the other eigenvalues corresponds to a positive eigenfunction. As a consequence, $\lambda c(\lambda):= r+  \lambda^2 \theta_{\text{max}} - \gamma( \lambda )$ solves the problem. 
%
Since $Q_\lambda \in{C}^2(\Theta)$ and satisfies 
Neumann boundary conditions, there exists $\theta_0$ such that $\Delta Q_{\lambda}(\theta_0) = 0$. 
%
%Since $- \lambda c(\lambda) + \lambda^2 \theta + r $ is increasing with $\theta$, the sign of $Q_{\lambda}''$ and thus the {convexity} of $Q_\lambda$ follows. 
%
We deduce from (\ref{mar1806}):
\begin{equation*}
c(\lambda) = \frac{1}{\lambda} + A(\theta_0) + \lambda D(\theta_0).
\end{equation*}
As the functions $A(\theta)$ and $D(\theta)$ are bounded, and $D(\theta)$
is uniformly positive, $c(\lambda)$ satisfies
%\lambda^2 \theta_{min} + r  \leq \lambda c(\lambda)  \leq \lambda^2 \theta_{max} + r.
%\end{equation*}
%This yields 
\begin{equation*}
c(\lambda) \underset{\lambda \to 0}{\sim} \frac{1}{\lambda}, \qquad \lambda c(\lambda) = \mathcal{O}_{\lambda \to + \infty}(\lambda^2).
\end{equation*}
The continuity of the function $c(\lambda)$ implies 
the existence of a positive minimal speed $c^*$ and 
a smallest positive minimizer $\lambda^*$.

Differentiating \eqref{mar1806} with respect to $\lambda$, we obtain
\begin{equation*}
\left( - \lambda c'(\lambda) - c(\lambda) + 
A + 2 \lambda D \right) Q_\lambda 
+ \left(  \lambda^2 D + \lambda A  -\lambda c(\lambda) + 1 \right) \frac{\partial Q_\lambda}{\partial \lambda} + 
\Delta_{\theta } \left( \frac{\partial Q_\lambda}{\partial \lambda} \right) = 0.
\end{equation*}
Let us multiply by $Q_\lambda$ and integrate. We obtain, for all $\lambda >0$, 
\begin{equation}\label{rel4}
\int_\Theta \left( - \lambda c'(\lambda) - c(\lambda) + A(\theta) + 
2 \lambda D(\theta) \right) Q_\lambda^2  d\theta = 0.
\end{equation}
In particular, for $\lambda = \lambda^*$, we have $c'(\lambda^*)=0$, and
(\ref{rel3}) follows.  Finally, for the last claim, it is easy to see by differentiating twice \eqref{eq:mu} and using $c'(\lambda^*)=0$ that 
\[
c''(\lambda^*)=\frac{\mu''(\lambda^*)}{\lambda^*}.
\]
In addition, the variational principle for 
the principal eigenvalue $\mu(\lambda)$ of (\ref{mar1810})   
implies that $\mu(\lambda)$ is a convex function. A straightforward
computation shows that actually $\mu''(\lambda^*)>0$, thus $c''(\lambda^*)>0$.
%\end{proof}

%The strict convexity of $c(\lambda)$ is well known, see e.g.\EB{~\cite{????}}.
%\begin{equation*}
%- c^* \int_\Theta \left( Q^* \right)^2 d\theta + 
%\int_\Theta \left[A(\theta) + 2 \lambda^* D(\theta)   \right] 
%\left( Q^* \right)^2 d\theta = 0.
%\end{equation*}
%As a consequence
%\begin{equation}\label{rel3}
%%c'(\lambda^*) = 0, 
%\qquad c^* = \frac{\int_\Theta \left[a(\theta) + 2 \lambda^* d(\theta)   \right] \left( Q^* \right)^2 d\theta  }{\int_{\Theta} \left( Q^* \right)^2 d \theta}.
%\end{equation}
%We can differentiate again and obtain 
%\begin{equation*}
%\int_\Theta \left( - \lambda c''(\lambda) - 2c'(\lambda) + 2 d(\theta) \right) Q_\lambda^2 + \left( - \lambda c'(\lambda) - c(\lambda) + a(\theta) + 2 \lambda d(\theta) \right) 2 Q_\lambda \left( Q_\lambda \right)'  d\theta = 0,
%\end{equation*}
%so that
%\begin{equation*}
%\int_\Theta \left( - \lambda^* c''(\lambda^*) + 2 d(\theta) \right) \left( Q^* \right)^2 + 2 \left( - c^* + a(\theta) + 2 \lambda^* d(\theta) \right) Q^* \left( Q^* \right)'  d\theta = 0,
%\end{equation*}
%

\subsection{A ``heat equation'' bound for the local cane toads equation}

Motivated by the exponential solutions, we may decompose 
a general solution $u(t,x,\theta)$ of the linearized Fisher-KPP equation 
(\ref{mar1802}) as
\begin{equation}\label{mar2902}
u(t,x,\theta)=e^{-\lambda^*(x-c^*t)}Q^*(\theta)p(t,x,\theta).
\end{equation}
The function $p(t,x,\theta)$ then satisfies
\begin{equation}\label{mar2904}
p_t = D p_{xx} + \Delta_\theta p
- \left( 2\lambda^* D + A  \right) p_x + \frac{2 }{Q^*} \nabla_\theta Q^*_\theta\cdot\nabla_\theta p,
\end{equation}
with the Neumann boundary conditions
\begin{equation}\label{aug3036}
\pdr{p}{\nu_\theta}=0, \qquad \hbox{ on $\partial\Theta$.}
\end{equation} 
If $D \equiv 1$ and $A \equiv 0$, then $Q^*\equiv 1$ and
$c^*=2\lambda^*$, meaning that (\ref{mar2904}) is simply the standard heat 
equation in the frame moving with speed $c^*$. As we have mentioned, in order to keep the solutions of the
linearized problem bounded, we need to impose the Dirichlet boundary condition
at a moving boundary.
The next proposition shows that, in general,
the special form of the drift terms in~(\ref{mar2904}) balances exactly
so that the solutions decay as those of the heat equation,
with the Dirichlet boundary condition imposed. We formulate it for a slightly
more general equation than~(\ref{mar2904}), which we will need below.
\begin{prop}\label{p:p_decay}
Let $\omega : \R^+ \mapsto \R^+$, $\overline \omega$, $C$, and $T$ be such that 
\begin{equation}\label{sep702}
\tau\omega(\tau)  \to\overline\omega \hbox{ as }\tau\to+\infty, \qquad\qquad
|\omega'(\tau) \tau^2|, (\tau+T)\,\omega(\tau) \leq C
\end{equation}
and let $p_0$ be 
a non-zero, non-negative function such that   
that $p_0(x) = 0$ for all $x > x_0$ and such that $\1_{[0,\infty)} p_0$ is non-zero.  Suppose that 
$p$ satisfies
\begin{equation}\label{e:p_equation}
(1 - \omega)p_\tau = D  p_{xx} + \Delta_\theta p
- \left( 2\lambda^* D  + A  \right)p_x + \frac{2}{Q^*} \nabla_\theta Q^*\cdot\nabla_\theta p ,~~
%(\tau,x,\theta) \in \R^+ \times [c^*\tau, \infty)\times \Theta,
\end{equation}
for $\tau>0$, $x>c^*\tau$, and $\theta\in\Theta$, with the Neumann boundary condition (\ref{aug3036}), 
the Dirichlet boundary condition for $\tau >0$, 
\begin{equation}\label{mar1820}
p(\tau, c^*\tau,\cdot) = 0,
\end{equation}
and the initial condition
$p(0,\cdot) = p_0$.
There exists $T_0$ such that if $T\geq T_0$, then there exist~$\sigma>0$ and~$C>0$  that do not depend on $p_0$, and~$\tau_0>0$ that may depend on $p_0$ such that
\begin{equation}\label{mar1822}
\frac{x-c^*\tau}{C \tau^{3/2}} \leq p(\tau, x,\theta) 
\leq \frac{C(x-c^*\tau)}{\tau^{3/2}},
\end{equation}
for all $x \in [c^* \tau, c^* \tau + \sigma \sqrt{\tau}]$, all $\theta \in \Theta$ and all $\tau \geq \tau_0$.
\end{prop}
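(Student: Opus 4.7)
The plan is to reduce \eqref{e:p_equation} to a one-dimensional heat equation on the half-line $(0,\infty)$ with Dirichlet data at $0$, whose fundamental solution on compactly supported positive initial data has the sharp long-time profile $y\tau^{-3/2}\exp(-y^2/(4\gamma\tau))$. First I would pass to the co-moving variable $y=x-c^*\tau$ with $q(\tau,y,\theta)=p(\tau,y+c^*\tau,\theta)$, rewriting \eqref{e:p_equation} as
\[
(1-\omega)q_\tau = Dq_{yy} + \tfrac{1}{(Q^*)^2}\nabla_\theta\!\cdot\!\bigl((Q^*)^2\nabla_\theta q\bigr) - \bigl[2\lambda^*D+A-(1-\omega)c^*\bigr]q_y,
\]
on $\{y>0,\theta\in\Theta\}$ with Dirichlet data at $y=0$ and Neumann data on $\partial\Theta$. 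Two structural features then guide the analysis: the $\theta$-operator is self-adjoint in $L^2(\Theta,(Q^*)^2\,d\theta)$, with $0$ a simple eigenvalue and a positive spectral gap; and by \eqref{rel3}, the $y$-drift has zero $(Q^*)^2$-average, up to the error $\omega c^*$ which is $O(1/\tau)$ by \eqref{sep702}.

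For the upper bound, I would take the super-solution
\[
\overline\psi(\tau,y,\theta)=\frac{c_+\,y}{(\tau+T)^{3/2}},
\]
cut off outside the strip $y\le\sigma\sqrt{\tau+T}$, and verify that the residual is non-negative in the strip: the $\tau$-derivative produces $-\tfrac{3}{2}c_+y(\tau+T)^{-5/2}$, the second $y$-derivative vanishes, and the drift contributes only an $O(1/(\tau+T)^{3/2})$ quantity times a $\theta$-bounded factor. Choosing $c_+$ large makes the values on the lateral boundary $y=\sigma\sqrt{\tau+T}$ dominate the uniform $L^\infty$ bound on $q$ (obtained from a preliminary estimate that follows from parabolic regularity applied to $q$ on bounded time intervals), and the maximum principle applied to $\overline\psi-q$ gives the upper half of \eqref{mar1822}.

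For the lower bound I would try the Gaussian-modulated profile
\[
\underline\psi(\tau,y,\theta)=\frac{c_-\,y}{(\tau+T)^{3/2}}\exp\!\left(-\frac{y^2}{4\gamma(\tau+T)}\right),
\]
with $\gamma$ strictly larger than $\sup_\theta D(\theta)$, and check that the leading $\tau^{-5/2}$ terms cancel between $(1-\omega)\partial_\tau\underline\psi$ and $D\,\partial_{yy}\underline\psi$ up to a negative remainder for $y\le\sigma\sqrt{\tau+T}$ with $\sigma$ small, while the $\omega$ factor and the $\omega c^*q_y$ correction contribute $O(1/\tau)$ times the leading term, absorbed by taking $T\ge T_0$ large. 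For the initial comparison, the hypothesis that $\mathbf{1}_{[0,\infty)}p_0$ is non-zero together with the strong maximum principle (applied after extending the sub-solution back to $\tau$ near $0$) yields a time $\tau_0$ (depending on $p_0$) such that $q(\tau_0,\cdot,\cdot)$ dominates the trace of $\underline\psi$; comparison then propagates the bound on the parabolic region bounded by $y=0$, $y=\sigma\sqrt{\tau+T}$ (where $\underline\psi$ is exponentially small while $q$ is bounded below by the Gaussian tail of the super-solution trap), and $\tau=\tau_0$.

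The main obstacle is Step~3: the identity \eqref{rel3} cancels the drift only in $(Q^*)^2$-mean, yet the sub-solution ansatz is pointwise in $\theta$, so the fluctuation $2\lambda^*D(\theta)+A(\theta)-c^*$ generates a residual of order $y(\tau+T)^{-5/2}$ with a $\theta$-sign that may be unfavorable. To absorb it I would either correct $\underline\psi$ by an additive term $\underline\psi_1(\tau,y)R(\theta)$, with $R$ solving $\tfrac{1}{(Q^*)^2}\nabla_\theta\cdot((Q^*)^2\nabla_\theta R)=2\lambda^*D+A-c^*$ (solvable in the orthogonal complement of constants thanks to the spectral gap from Step~1) so that the $\theta$-oscillation is cancelled to leading order, or work pointwise by forcing the Gaussian-damping term $-y^2/(2\gamma(\tau+T)^{7/2})$ to dominate the fluctuation in the strip $y\le\sigma\sqrt{\tau+T}$. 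Tracking the dependence on $T$ and the $\omega,\omega'$ bounds in \eqref{sep702} through each of these estimates is the technical content that I expect to absorb the bulk of the proof.
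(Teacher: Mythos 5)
There is a genuine gap, and it sits exactly where the paper has to work hardest: the value of $p$ on the lateral boundary $x=c^*\tau+\sigma\sqrt{\tau}$ of your comparison strip. For your upper bound, the barrier $c_+\,(x-c^*\tau)/(\tau+T)^{3/2}$ cut off at $x-c^*\tau=\sigma\sqrt{\tau+T}$ takes the value $c_+\sigma/(\tau+T)$ there, which decays in time, while the only a priori information on $p$ at that point is the uniform bound $\|p_0\|_\infty$ from the maximum principle; no fixed choice of $c_+$ can make an $O(1/\tau)$ barrier dominate an $O(1)$ quantity for all large $\tau$, so the comparison on the strip cannot be closed as stated. For your lower bound the situation is worse: the Gaussian-modulated sub-solution is \emph{not} exponentially small at $x-c^*\tau=\sigma\sqrt{\tau+T}$ (it is of size $c_-\sigma e^{-\sigma^2/4\gamma}/(\tau+T)$), and your claim that $q$ is bounded below there "by the Gaussian tail of the super-solution trap" is circular -- an upper barrier can never produce a lower bound on $q$. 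What is needed at that boundary is precisely the two-sided estimate $p(\tau,c^*\tau+\sigma\sqrt{\tau})\asymp 1/\tau$ of \Cref{p:p_in_front}, and proving it is the bulk of the paper's argument: an approximately conserved weighted mass $I_\alpha$, the exponentially weighted quantities $V_\alpha$ with the differential inequality \eqref{eq:V_alpha}, the Nash-type inequality of \Cref{p:Nash} relating $V_\alpha$ to its dissipation, the integral lower bound of \Cref{p:p_in_front_weak}, and heat kernel lower bounds. None of this (nor any substitute ruling out that the mass near the Dirichlet boundary simply leaks away, which would make $p$ at $c^*\tau+\sigma\sqrt\tau$ of order $o(1/\tau)$) appears in your outline. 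Note also that the paper does not prove the upper bound by a barrier at all: it obtains the global bound \eqref{sep110} by an $L^1$--$L^2$ Nash estimate in the self-adjoint variables \eqref{aug3156}--\eqref{aug3164} followed by a duality argument, exactly because a strip comparison needs boundary information one does not have at that stage.

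Two further points on the interior residual, which you correctly flag as your "main obstacle.'' First, cancelling the $\theta$-fluctuation $2\lambda^*D+A-c^*$ by a single corrector $R(\theta)$ is only the first step of the paper's systematic expansion (\Cref{p:approximate_soln}): the corrector term generates new contributions at the next order whose solvability condition is what produces the effective longitudinal diffusivity $\overline D$ of \eqref{e:overline_Dbis}, equal to $\lambda^*c''(\lambda^*)/2$, and because of Taylor-type dispersion induced by the $\theta$-dependent drift this $\overline D$ need not be bounded by $\sup_\theta D$; your prescription $\gamma>\sup_\theta D(\theta)$ is therefore not the right normalization for matching the true profile, even if it may still produce a one-sided sub-solution for small $\sigma$. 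Second, even granting a valid sub-solution in the strip, the constants in \eqref{mar1822} must be independent of $p_0$ up to the time shift $\tau_0$, which in the paper comes from the quantitative conservation-law bounds \eqref{sep112} and \eqref{eq:strong_I_alpha}, not from a soft strong-maximum-principle argument at a single time $\tau_0$.
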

As the proof is rather technical, we postpone it for the moment.  Its proof is in \Cref{s:p_decay}.

\subsection{The upper bound}

We will now show how to deduce the statement of
Theorem~\ref{p:kpp} from Proposition~\ref{p:p_decay}, starting with the upper bound. We will thus prove that the delay is at least $\frac{3}{2\lambda^*}\log(t)$ in the following sense: 
\begin{equation*}
\max \{x \in \R \, : \, \exists \theta \in \Theta, \,u(t,x,\theta) = m\}
\leq c^* t - \frac{3}{2\lambda^*}\log(t) + x_m,
\end{equation*}
for some constant $x_m$. The idea is to use the linearized problem with 
a moving Dirichlet boundary condition to create a suitable
super-solution.  Obviously, the Dirichlet boundary condition prevents the
solution of this problem from being directly 
a super-solution.  To overcome this, we show that the solution to the linearized equation is greater than $1$ near the moving boundary.  
Hence, after a suitable cut-off, it will be a true super-solution.

To this end, we consider the solution to the linearized
problem with the Dirichlet boundary condition at $x=c^*t-r\log (1+t/T)$, with
$r$ and $T$ to be determined:
\begin{equation}\label{sep2812}
\left\{
\begin{array}{ll}
z_t - D z_{xx} -  \Delta_\theta z + A z_x = z,&
\hbox{ for } x>c^*t - r \log(1 + t/T),\medskip \\
%(t,x,\theta) \in \R^+ \times(c^*t - r \log(1 + t/T),\infty)\times \Theta,
z(t,c^*t - r \log(1 + t/T), \cdot) = 0, &\medskip \\
\pdr{z}{\nu_\theta}=   0,&\hbox{ on } \partial\Theta,\medskip \\
%&\text{ for all } (t,x)\in \R^+\times [c^*t - r\log(1+t/T)\\
z(0,\cdot) = u_0.
\end{array}
\right.
\end{equation}
%Here $r = 3/(2\lambda^*)$.  We note that the following steps will show that if $r$ 
%is left arbitrary, the choice $3/(2\lambda^*)$ will be imposed if we require that 
%$z$ neither grows to infinity nor decays to zero.
%The boundary condition here is at $x = c^* t - r\log(1 + t/T)$.
% hence
%Proposition~\ref{p:p_decay} does not immediately apply.  
We make a time change
\begin{equation}\label{sep2814}
\tau = t - \frac{r}{c^*} \log\left(1 + \frac{t}{T}\right). 
\end{equation}
By fixing $T$ large enough, depending only on $r$ and $c^*$, we may ensure
that the function $h(\tau) = t$ is one-to-one, and
%\begin{eqnarray*}
%\Big|\frac{\tau}{t}-1\Big|\le \frac{r}{c^*t} \log(1 + t/T)\le 
%\frac{C}{t^{1-\delta}},
%\end{eqnarray*}
%thus
\begin{equation}\label{sep2816}
\frac{1}{h'(\tau)}
	= 1 - \frac{r}{c^*(t + T)}
	= 1 - \frac{r}{c^*(\tau + T) + r\log(1 + t/T)}
	= 1 - \frac{r}{c^*(\tau + T)} + O(\tau^{-3/2}).
\end{equation}
To simplify the notation, we define 
\begin{equation}\label{aug3110}
\omega(\tau) = 1 - \frac{1}{h'(\tau)},~~|\omega'(\tau)| = O(\tau^{-2}).
\end{equation}
%\[
%\omega(\tau) \sim \farc{r}{c^*\tau},
%\]
%and 
%$|\omega'| = O(\tau^{-2})$. 
Notice that $\omega$ satisfies~\eqref{sep702}. The function  
$\tilde z(\tau,\cdot) = z(t,\cdot)$  satisfies
\[
(1 - \omega)\tilde z_\tau = \tilde D\tilde z_{xx} + 
 \Delta_\theta\tilde z  - A \tilde z_x + \tilde z.
\]
Let $\tau \mapsto \alpha(\tau)$ be a function to be determined later, 
and decompose $\tilde z$ as
\[
\tilde z(\tau,x,\theta) = \alpha(\tau) e^{-\lambda^*(x-c^* \tau)} 
Q^*(\theta) \tilde p(\tau,x,\theta).
\]
%It follows, by the definition of $Q$, $\lambda^*$, and $c^*$, that 
The function $\tilde p$ satisfies
\begin{equation}\label{mar3104}
(1 - \omega)\tilde p_\tau= D \tilde p_{xx}
+  \Delta_\theta\tilde p - (A+2\lambda D)p_x
+ \farc{2}{Q^*} \nabla_\theta Q^* \cdot\nabla_\theta\tilde p
+ \left( - \frac{\alpha'}{h' \alpha} + \frac{r\lambda^*}{t+T}\right)% c^*\left( 1 - \frac{1}{h'}\right)\right) 
\tilde p,
\end{equation}
and $\tilde p(\tau,c^*\tau, \cdot) = 0$ for all $\tau$. 
We choose $\alpha$ as the solution of
\begin{equation}\label{mar3102}
\frac{\alpha'}{\alpha} = \farc{r\lambda^*}{t+T}h'=
\frac{r\lambda^*}{\tau + T} + O\left(\frac{1}{(\tau + T)^{2}}\right),
%c^* \lambda^* (h' - 1) = 
%\frac{r\lambda^*}{\tau + T} + O\left(\frac{1}{(\tau + T)^{3/2}}\right),
\end{equation}
with the asymptotics:
\begin{equation}\label{e:alpha}
\alpha(\tau) = \exp\left\{ r \lambda^*\log(\tau + T) + 
O(\tau^{-1})\right\} = (\tau + T)^{r\lambda^*}(1 + O(\tau^{-1})).
\end{equation}
%Then $\tilde p$ satisfies
%\begin{eqnarray}\label{e:p_tilde}
%&&(1 - \omega) \tilde p_\tau  = D \tilde p_{xx} + 
%\alpha \tilde p_{\theta\theta} - A \tilde p_x + 2D\frac{Q_\theta^*}{Q^*} \tilde p_x, 
%\nonumber\\
%&&\tilde p (\tau, c^*\tau, \theta) = 0.
%\end{eqnarray}
In view of \eqref{aug3110}, we may apply Proposition \ref{p:p_decay} to the solutions of (\ref{mar3104}).  
This, along with \eqref{e:alpha}, implies that 
if we choose
\begin{equation}\label{mar3018}
r=\farc{3}{2\lambda^*},
\end{equation}
then there exist constants $\sigma$, 
$C_1$ and $C_2$ and a fixed time $\tau_0$ such that we have 
\begin{equation}\label{mar3020}
C_1 (x-c^*\tau) e^{-\lambda^*(x-c^*\tau)}
\leq \tilde z(\tau,x,\theta)
\leq C_2 (x-c^*\tau) e^{-\lambda^*(x-c^*\tau)},
\end{equation}
for $\tau \geq \tau_0$ and all 
$x \in[c^*\tau, c^*\tau + \sigma\sqrt\tau]$.
Hence, we may choose $M$ such that 
\[
M\tilde z(\tau, c^*\tau + 1,\theta) 
\geq 2,
\]
for all~$\tau \geq \tau_0$ and~$\theta\in\Theta$.

We may now define a super-solution for the nonlinear Fisher-KPP equation  (\ref{e:local_toads}) as
 \[
\overline u (t,x,\theta) = 
\begin{cases}
\min(1, Mz(t,x,\theta)), ~~~~~~&\text{ for all } x \geq c^* t - r\log(1 + t/T) + 1, \medskip \\
1, & \text{ for all } x \leq c^*t - r\log(1 + t/T) +1.
\end{cases}
\]
Figure~\ref{figure1} depicts a sketch of the solution $u$ of the nonlinear Fisher-KPP problem,
and the super-solution $\overline u$.
We also have $\overline u(h(\tau_0),\cdot) \geq u_0$ for a sufficiently
large $M$, since $u_0$ is compactly supported on the right.  Hence, we have 
\[
u(t,\cdot)\leq \overline u (h(\tau_0) + t,\cdot)
\]
for all $t\ge t_0$.

To conclude, it follows from the form of our super-solution and (\ref{mar3020})
that, given any $m\in (0,1)$, we may choose $x_m\geq 1$ 
such that $\overline u(t,x,\theta)<m$ for all $t\ge t_0$, all
\[
x \geq c^* t - \frac{3}{2\lambda^*} \log t + x_m,
\]
and all $\theta\in\Theta$.
%$M x_m e^{-\lambda^* x_m}< m$.  %Since {\color{red} $n_0$ is monotonic}, 
Thus, for such $x$ we have
%\[
%x \geq c^* t - \frac{3}{2\lambda^*} \log(t) + x_m,
%\]
%then
\[
%\begin{split}
u(t,x,\theta) \leq \bar u(t,x,\theta)\le m,
%	\leq n\left(t, c^*t - \frac{3}{2\lambda^*}\log(t) + x_m,\theta\right)
%		\leq \overline n(t,c^*t - \frac{3}{2\lambda^*}\log(t) + x_m,\theta)\\
%		&\leq M z(t, c^*t - \frac{3}{2\lambda^*}\log(t) + x_m, \theta)
%		\let M \tilde z (\tau, c^*\tau + x_m, \theta)\\
%		&\leq M x_m e^{-\lambda^* x_m} < m.
%\end{split}
\]
for all $t\ge t_0$ and $\theta\in\Theta$.
%Hence, we have that
%\[
%	\left\{(x,\theta): n(t, x, \theta) = m\right\}
%		\subset \left(-\infty, c^*t - \frac{3}{2\lambda^*}\log(t) + x_m\right]\times \Theta.
%\]
This concludes the proof of the upper bound in Theorem~\ref{p:kpp}.

\begin{figure} 
\begin{center}
\includegraphics[scale=.95]{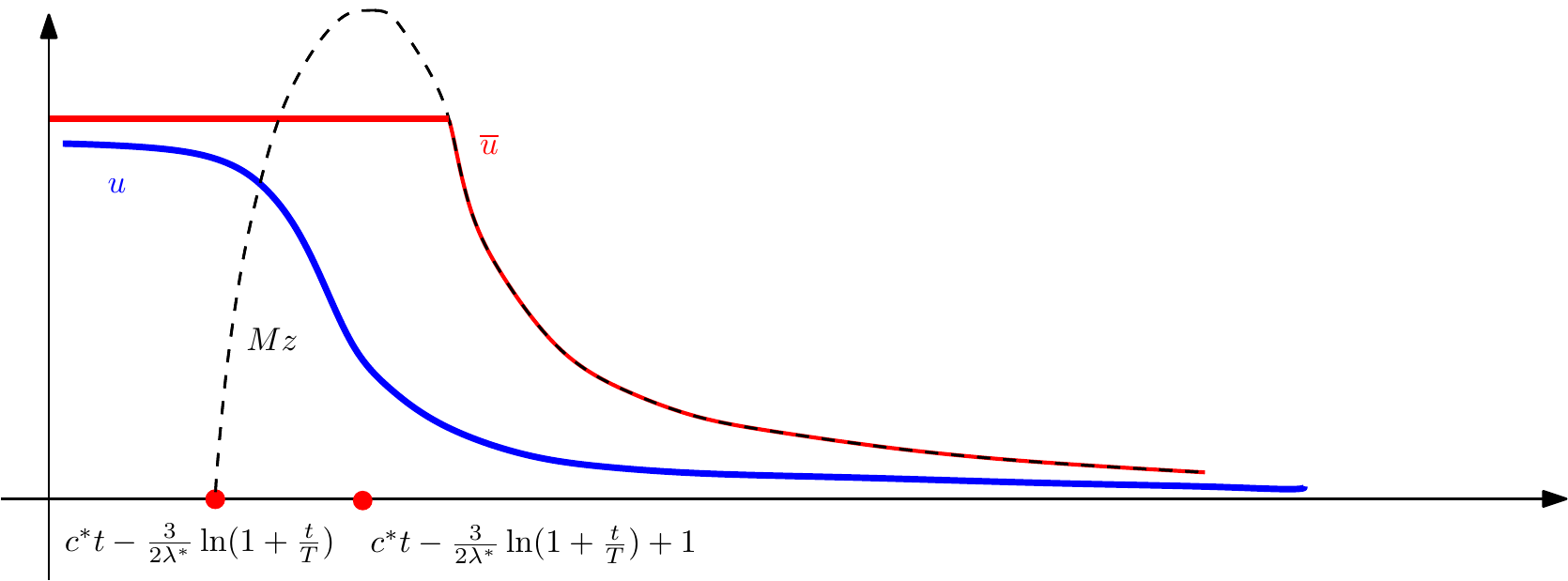}
\end{center}
\caption{A sketch of the solution $u$ and the super-solution $\overline u$.}
\label{figure1}
\end{figure}

\subsection{The lower bound}

We now prove that the delay is at most $\frac{3}{2\lambda^*}\log(t)$ in the following sense: 
\begin{equation*}
\min \{x \in \R \, : \, \exists \theta \in \Theta, \,u(t,x,\theta) = m\}
\geq c^* t - \frac{3}{2\lambda^*}\log(t) + x_m,
\end{equation*}
for some constant $C_m$.
The proof of the lower bound requires the same estimates as the upper bound, 
but the approach is slightly different.  Note that the solution to the 
linearized equation is not a sub-solution to the nonlinear Fisher-KPP equation
since $f(u)\le u$.  To get around this, we solve the linearized equation 
with a moving Dirichlet boundary condition at $c^* t$, 
instead of~$c^*t - (3/2\lambda^*)\log(t)$, 
in order to make this solution small.  Then, we modify the solution to the 
linearized equation by an order $O(1)$
multiplicative factor in order to obtain a 
sub-solution.

The resulting sub-solution will decay in time.  
Hence, we may not directly conclude a lower bound on 
the location of the level sets.  Instead, we show that 
this sub-solution is of the correct 
order $e^{-\sigma \sqrt{t}}/t$ at the position
$c^*t + \sigma\sqrt t$.  This will allow us to fit a travelling wave underneath the solution $u$ of the Fisher-KPP
equation on the half-line $x<c^*t + \sigma\sqrt t$, 
and we use this travelling wave to obtain a lower bound on the location 
of the level sets~of~$u$. We will assume without loss of generality that
\begin{equation}\label{aug3126}
\ell:=\liminf_{x\to-\infty}\inf_{\theta\in\Theta}u_0(x,\theta)=1.
\end{equation}
It is straightforward to modify the argument below to account for the case $\ell<1$. Note that $\ell>0$ by
assumption (\ref{aug3026}). As a consequence of (\ref{aug3122}) we have that, for all $t\geq 0$,
\begin{equation}\label{aug3128}
\liminf_{x\to-\infty}\inf_{\theta\in\Theta}u(t,x,\theta)=1.
\end{equation}

\subsubsection*{A preliminary sub-solution using the linearized system}

As outlined above, the first step  is to obtain a sub-solution  decaying in time.  To this end, we look at the solution $w$ to
\begin{equation}\label{mar3024}
\left\{
\begin{array}{ll}
w_t - Dw_{xx} -  \Delta_\theta w + A w_x = w,&
\hbox{ for } x>c^*t,\medskip \\
%(t,x,\theta) \in \R^+ \times(c^*t - r \log(1 + t/T),\infty)\times \Theta,
w(t,c^*t, \cdot) = 0, &\medskip \\
\pdr{w}{\nu_\theta}=   0,&\hbox{ on } \partial\Theta,\medskip \\
%&\text{ for all } (t,x)\in \R^+\times [c^*t - r\log(1+t/T)\\
w(0,\cdot) = u_0.
\end{array}
\right.
\end{equation}
%\begin{eqnarray}\label{mar3024}
%&&w_t - D(\theta) w_{xx} -  \Delta_\theta w + A(\theta) w_x = w,
%\hbox{ for $x>c^*t$},\nonumber\\
%&&w(t,c^*t, \theta) = 0,\\
%&&\pdr{w(t,x,\theta)}{\nu_\theta}  = 0\hbox{ on $\partial\Theta$},\nonumber\\	
%&&w(0, x, \theta) = u_0(x,\theta).\nonumber
%\end{eqnarray}
As before, we factor out  a decaying exponential, and
the eigenfunction $Q^*$:  
\begin{equation}\label{aug3118}
w(t,x,\theta) = e^{-\lambda^*(x-c^*t)} Q^*(\theta) p(t,x,\theta).
\end{equation}
The function $p$ satisfies
\begin{eqnarray}\label{mar3024bis}
&&p_t = D p_{xx} +  \Delta_\theta p  - 
\left( 2\lambda^* D + A  \right) p_x + 
\frac{2}{Q^*} \nabla_\theta Q^*\cdot\nabla_\theta p_\theta,~~~\text{ for $x>c^*t$, } 
%\nonumber\\
%&&p(t, c^*t,\theta) = 0\\
%&&p_\theta(t,x,\underline\theta) = p_\theta(t,x,\overline\theta) = 0,\nonumber\\
%&&p(0,x,\theta) = \frac{u_0(x,\theta)}{Q^*(\theta)} e^{\lambda^*x} .\nonumber
\end{eqnarray}
with the corresponding boundary and initial conditions.
%Notice that $p$ satisfies the conditions of \Cref{p:p_decay} since we may take $\omega = 0$.  We will use the lower bound that this implies later.  In addition, {\color{red} ADD REFERENCE} 
\Cref{p:p_decay} with~$\omega = 0$ gives an upper bound
\[
|p(t,x+c^*t)| \leq \frac{Cx}{(t+1)^{3/2}},
\]
%
%	STUFF BELOW REMOVED BECAUSE REDUNDANT.  Cref{p:p_decay} is more general
%
%We have the following uniform bound on $p$. {\bf Say something about the proof.}
%\begin{lem}\label{p:fast_decay}
%Let $p$ be as above.  Then there exists constant $C_0>0$ which depends on the initial condition $n_0(x)$ 
%such that
%\[
%|p(t,x+c^*t)| \leq \frac{C_0x}{(t+1)^{3/2}}  ,
%\]
%holds for every $t>0$ and every $x >0$.
%\end{lem}
%\noindent Using Lemma~\ref{p:fast_decay} and the decomposition of $w$, we have that
that, along with the decomposition (\ref{aug3118}) gives  
\begin{equation}\label{eq:t32_decay}
\|w(t,\cdot,\cdot)\|_\infty
\leq \frac{C}{(1+t)^{3/2}}%\left( \int_0^\infty yp_0(y)dy\right) 
%\sup_{x\geq 0} \left[ x e^{-\lambda^* x}\right].
\end{equation}
This temporal decay allows us to devise a sub-solution of the Fisher-KPP problem,
of the form  
\[
\underline{w}(t,x,\theta)
=a(t) w(t,x,\theta).
\]
To verify that $\underline{w}$ is a sub-solution, we note that
\[
\underline{w}_t - D\underline{w}_{xx} -  \Delta_\theta\underline{w} 
+ A \underline{w}_x - f(\underline w)
\leq \frac{\dot a(t)}{a(t)} \underline{w} + \underline{w} - 
(\underline{w} - M_\delta \underline{w}^{1 + \delta}),
\]
with $\delta$ as in (\ref{e:f}).
Using~\eqref{eq:t32_decay}, we get
\[
\underline{w}_t - D\underline{w}_{xx} -  \underline{w}_{\theta\theta} + 
A \underline{w}_x - f(\underline w)
\leq \underline{w}   
\left(\frac{\dot a(t)}{a} + \frac{CM_\delta }{(t+1)^{{3\delta}/{2}}}\right).
\]
We let $a(t)$ be the solution of
\begin{equation}\label{e:modulation}
- \frac{\dot a}{a} = \frac{CM_\delta}{(t+1)^{3\delta/2}}.
\end{equation}
As $\delta>2/3$, there exists $a_0>0$ so that $a(t)>a_0$ for all $t>0$.
Taking $a(0)\leq 1$ ensures that 
\[
\underline{w}(0,\cdot) \leq u_0(x,\cdot),
\]
while \eqref{e:modulation} implies
\[
\underline{w}_t - D\underline{w}_{xx} - 
 \underline{w}_{\theta\theta} + A \underline{w}_x - f(\underline w)
\leq 0.
\]
As a result, the maximum principle implies that
\[
	\underline{w}(t,c^*t + x, \theta) \leq u(t, c^*t + x, \theta),
\] 
for all $\theta$, all $t$ and all $x \geq 0$.
%
%By \eqref{e:f}, we may choose $\delta > 2/3$.  In view of \Cref{e:modulation}, this choice of $\delta$ implies that $a$ is bounded above by $1$ and bounded below by a positive constant $\underline a$.  This, along with the definition of $\underline w$ implies that
%\[
%	\frac{w(t,c^*t + x,\theta)}{\underline a}
%		\leq \underline{w}(t,c^*t + x,\theta)
%		\leq n(t,c^*t + x,\theta)
%\]
%holds for all $\theta$, all $t$ and all $x \geq 0$.  
In particular, the conclusion of Proposition~\ref{p:p_decay} implies that there exists~$\sigma > 0$ and $T_0$ such that if $t \geq T_0$ then
\begin{equation}\label{e:u_lower_bound}
	\frac{C a_0 e^{-\sigma\sqrt{t}}}{t}
		\leq a_0 w(t,c^*t + \sigma \sqrt t,\theta)
		\leq u(t,c^*t + \sigma \sqrt t, \theta).
\end{equation}

\subsubsection*{A travelling wave sub-solution}

We now use the lower bound (\ref{e:u_lower_bound}) to fit a travelling wave under $u$.  
The sub-solution we will construct is sketched in Figure~\ref{figure2}.
In order to avoid complications due to boundary conditions 
at~$-\infty$, we fix $\overline m$ to be any constant in $(m,1)$, and
replace the non-linearity $f(u)$ by $f(u)(1 - u/\overline m)$.
Let~$U$ be the
travelling wave solution to the modified equation moving with 
speed $c^*$:
\begin{equation}\label{aug3120}
-c^*U_x - DU_{xx} - \Delta_\theta U+ 
A U_x - f(U )(1 - U /\overline m) = 0,
\end{equation}
with the Neumann boundary condition at $\partial\Theta$, and
\begin{equation}\label{aug3122}
U(-\infty,\cdot)=\overline m,~~U(+\infty,\cdot)=0.
\end{equation}
This wave satisfies $0 < U < \overline m$,  
so it sits below $u$ 
as $x$ tends to $-\infty$: see (\ref{aug3128}). 
However, it moves too quickly -- it does not have the logarithmic
delay in time.  Instead, we define
\begin{equation}\label{aug3144}
\underline U(t,x,\theta) = U (x - c^*t + s(t), \theta).
\end{equation}
It is easy to check that if $\dot s(t)\ge 0$, then $\underline U$ is a sub-solution to (\ref{aug3120}):
\begin{multline}\label{aug3134}
\underline U_t - D \underline U_{xx} -  \Delta_\theta\underline U + A \underline U_x - f(\underline U)(1- \underline U/\overline m )
\\ = -(c^* - \dot s(t))U_x - DU_{xx} -  \Delta_\theta U+ AU_x + f(U)(1 -U/{\overline m})=  \dot s(t) U_x\le 0,
\end{multline}
as $U$ is decreasing in $x$~\cite{BerestyckiNirenberg}. Hence, 
$\underline U$ is a sub-solution. 

We already know from (\ref{aug3128})  that $\underline U$ sits below $u$ at $x=-\infty$:
\begin{equation}\label{aug3136}
\underline U(t,x,\theta)<u(t,x,\theta),\hbox{ for all $t>0$ and $\theta\in\Theta$ for all $x$ sufficiently negative}.
\end{equation}
Thus, we only need to 
arrange for $\underline U$ to sit below~$u$ 
at $x = c^*t + \sigma\sqrt t$, 
with~$\sigma$ is as in (\ref{e:u_lower_bound}).  The 
travelling wave has the asymptotics~\cite{Hamel}
\begin{equation}\label{e:tw_asymptotics}
U(x,\theta)  \sim  xe^{-\lambda^*x}Q^*(\theta) 
\end{equation}
for large $x$ (uniformly in $\theta$).  By translation, 
we may ensure that
\[
U (x,\theta) \leq \eps x e^{-\lambda^* x},
\]
for all $x \geq 1$, with $\eps>0$ small to be chosen.  
In view of the definition of $\underline U$, for $t$ sufficiently large, we have 
\[
\underline U(t, c^* t + \sigma \sqrt{t},\cdot) \leq \eps (\sigma \sqrt{t} + s(t)) e^{-\lambda^*(\sigma\sqrt{t} + s(t))}.
\]
Choosing 
\begin{equation}\label{aug3148}
s(t) = \frac{3}{2\lambda^*}\log(1 + t),
\end{equation}
using (\ref{e:u_lower_bound}), and adjusting $\eps$ as necessary, we see that
\begin{equation}\label{aug3138}
\underline U(t, c^*t + \sigma \sqrt{t}, \cdot)
\leq \frac{C e^{-\sigma \sqrt{t}}}{a_0 t}
\leq u(t, c^*t + \sigma \sqrt t, \cdot).
\end{equation}
for all $t \geq T_0$.  In addition, because of (\ref{aug3128}), 
it is easy to see that translating $\underline U$ further to the left, we may ensure that 
\begin{equation}\label{aug3140}
\underline U(T_0,x,\theta)\leq u(T_0,x,\theta),
\end{equation}
for all $x\le c^*T_0+\sigma\sqrt{T_0}$ and all $\theta\in\Theta$.  
%
%
%On the other hand, for $t \leq T_0$, we may control $n$ below by the heat equation 
%so that, by adjusting $\eps$ if necessary, we also have
%\[
%	\underline U(t,c^*t + \sigma\sqrt t,\theta) \leq u(t,c^*t + \sigma \sqrt t,\theta).
%\]
%for all $t \leq T_0$.
%
%At the other boundary, we have  
%\[
%	\lim_{x\to -\infty} \underline U(t, x, \theta)
%		= \overline m
%		< 1
%		= \lim_{x\to-\infty} u(t,x,\theta).
%\]
%The first limit holds by construction and the second limit holds by elementary analysis of the heat kernel.  Finally, we may simply translate $U_{c^*}^{\overline m}$, if necessary, so that
%\[
%	\underline U(0,x,\theta)
%		\leq u_0(x,\theta)
%\]
%holds on $(-\infty, 0]$.
%
The combination of (\ref{aug3134}), (\ref{aug3136}), (\ref{aug3138}) and (\ref{aug3140}) 
the inequalities above, along with the maximum principle, implies that
\begin{equation}\label{aug3142}
\underline U(t,x,\theta) \leq u(t,x,\theta),
\end{equation}
for all $t\ge T_0$, all  $x \leq c^*t + \sigma\sqrt{t}$, and all $\theta\in\Theta$.

To conclude, we need to understand where the level set of height $m$ of $\underline U$ is.  We see from
(\ref{aug3144})
%By construction, we know that
%\[
%	\underline U(t,c^*t + x - c^* s(t),\theta)
%		= U_{c^*}^{\overline m}(x, \theta).
%\]
that there exists $L_m$ such that if $x < - L_m$ then 
\[
\underline U(t,c^*t + x -  s(t),\theta) > m.
\]
Thus, (\ref{aug3148}) and (\ref{aug3142}) mean that
\[
\{x \in \R \, : \, \exists \theta \in \Theta, u(t,x,\theta) = m\}
\subset \left[ c^*t - \frac{3}{2\lambda^*} \log(1+t) - L_m, \infty\right).
\]
This finishes the proof of the lower bound in Theorem~\ref{p:kpp}.
  
\begin{figure}
\begin{center}
\includegraphics[width=.9\linewidth]{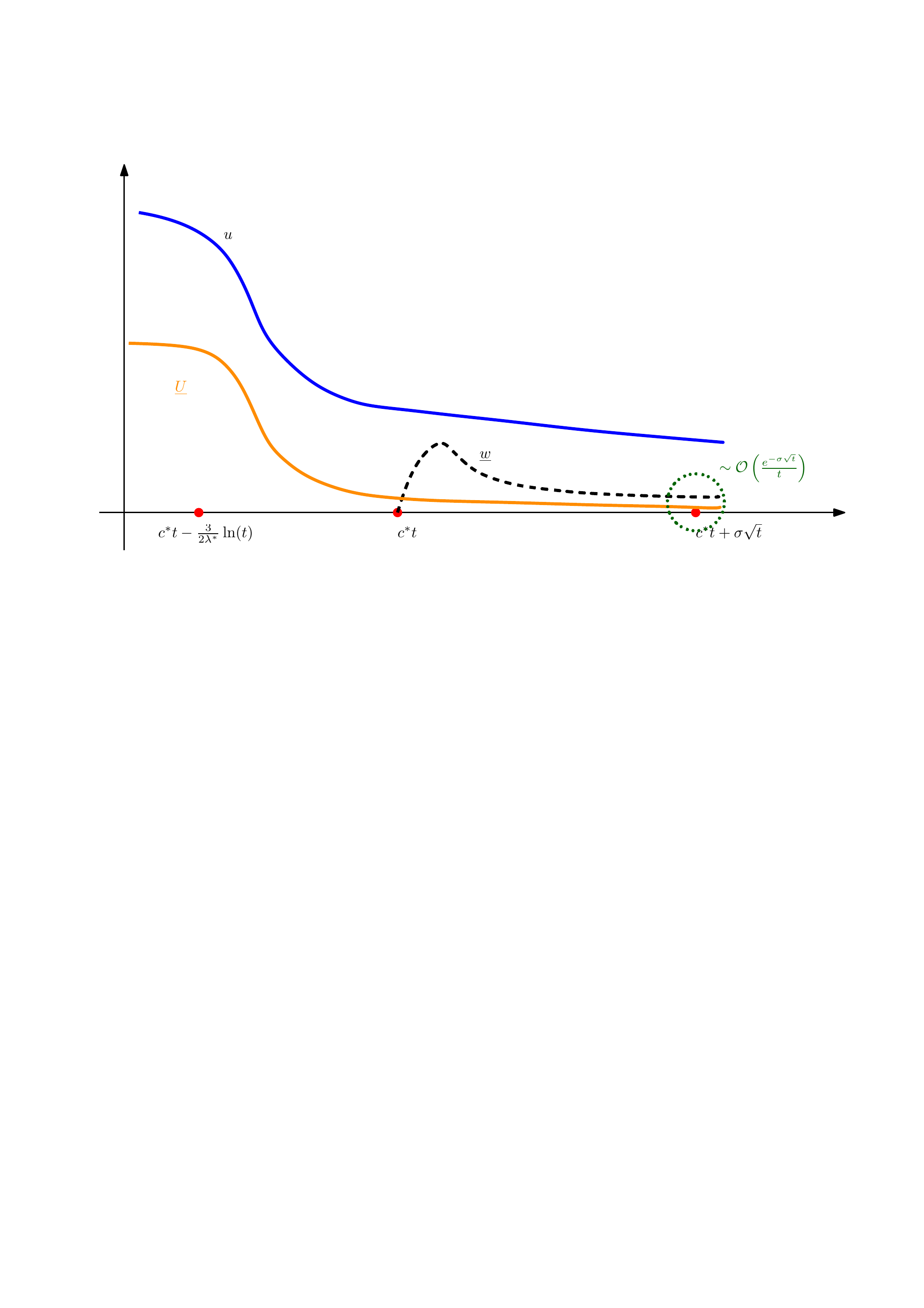}
\end{center}
\caption{A sketch of the sub-solution $\underline U$, the solution $u$ of the Fisher-KPP problem, and of the solution of the
linearized problem with the Dirichlet boundary condition at $x=c^*t$.}
\label{figure2} 
\end{figure}

\section{The proof of Proposition~\ref{p:p_decay}}\label{s:p_decay}

In this section, we prove Proposition~\ref{p:p_decay}.
The proof of the upper bound in (\ref{mar1822}) 
is easier than for the lower bound, and this is what we will do first. 
Essentially, the remainder of the paper will then be devoted to the proof of 
the lower bound in (\ref{mar1822}). 

\subsection{The self-adjoint form}

Our first step is to re-write \eqref{e:p_equation} in a self-adjoint
form. 
%We first re-write equation \eqref{e:p_equation} for $p$  in the form
%\begin{equation}\label{aug3156}
%(1 - \omega(\tau))\mu(\theta)p_\tau = \mathcal{L}p. %,~~\tau>0,~(x,\theta)\in{\cal C}_\tau,
%\end{equation}
 %It is supplemented by the Neumann boundary condition (\ref{aug3036}):
%\begin{equation}\label{aug3160}
%\pdr{p}{\nu_\theta}=0\hbox{ on $\partial\Theta$},
%\end{equation} 
%and the Dirichlet boundary condition
%\begin{equation}\label{aug3162}
%p(\tau, c^*\tau,\theta) = 0, \text{ for } \tau>0,~\theta\in\Theta.
%\end{equation}
Let us set 
\begin{equation}\label{eq:nubis}
\mu = a (Q^*)^2,~~
a=\Big(\frac {1}{|\Theta|}\int_\Theta (Q^*)^2 d\theta\Big)^{-1}.
\end{equation}
Then we have an identity 
\begin{eqnarray}\label{e:p_equationbis}
D p_{xx} + \Delta_\theta p+ \frac{2}{Q^*} \nabla_\theta Q^*\cdot\nabla_\theta p
%(\tau,x,\theta) \in \R^+ \times [c^*\tau, \infty)\times \Theta,
%\end{equation}
%
%
%re-write the operator ${\cal L}$ in the form
%\begin{equation}\label{aug3166}
%{\cal L}p
= \frac{1}{\mu} \Big[
\left( D\mu p_x\right)_x +\nabla_\theta\cdot( \mu \nabla_\theta p) \Big].
%- \left( 2\lambda^* D(\theta) + A(\theta) \right)p_x.
\end{eqnarray}
In order to re-write the spatial drift term in the right side of (\ref{e:p_equation}), we look for a corrector
$\beta$ that satisfies  
\begin{eqnarray}\label{sep102}
&&\Delta_\theta\beta=2\lambda^* D + A -r \quad \hbox{ in $\Theta$},\\
&&\pdr{\beta}{\nu_\theta}=0\hbox{ on $\partial\Theta$},\nonumber
\end{eqnarray}
with some $r\in\Rm$.
%
%at  
%vector-valued function
%$\beta$  that  is 
%tangential to the boundary along $\partial\Theta$ and
%\begin{equation}\label{aug3168}
%\nabla_\theta\cdot(\beta\nu)+r=[2\lambda^* D(\theta) + A(\theta)]
%\mu(\theta),
%\end{equation}
%with some $r\in\Rm$. 
The solvability condition
for (\ref{sep102}) is 
\begin{eqnarray}\label{aug3170}
r =\int_\Theta[2\lambda^* D(\theta) + A(\theta)]\mu(\theta)\farc{d\theta}{|\Theta|}
=a\int_\Theta[2\lambda^* D(\theta) + A(\theta)](Q^*(\theta))^2
\farc{d\theta}{|\Theta|}=c^*.
\end{eqnarray}
We used (\ref{rel3}) and (\ref{eq:nubis}) in the last step above. Thus, (\ref{e:p_equation}) can be recast as
\begin{equation}\label{aug3156}
(1 - \omega)\mu p_\tau = \mathcal{L}p, %,~~\tau>0,~(x,\theta)\in{\cal C}_\tau
\end{equation}
with the operator ${\cal L}$ 
\begin{equation}\label{aug3164}
{\cal L}p=  
\left( D \mu p_x\right)_x +
\nabla_\theta\cdot( \mu \nabla_\theta p) 
- \left(\Delta_\theta \beta +{c^*} \right) p_x.
\end{equation}
%where $\mu(\theta) > 0$ is a constant multiple of $(Q^*)^2$ and 
%$\beta(\theta)$ is a $C^1$ vector-valued 
%function, that is tangential to the boundary conditions on $\partial\Theta$. 
Note that the  average of the advection term in $x$ in~(\ref{aug3164}) equals to $c^*$.  

We now state a lemma regarding almost-linear solutions to (\ref{aug3156}) and its adjoint.  
The latter will be crucial in the proof of the upper bound for $p$.  The former will be required later. 
We denote by~${\cal L}^*$ the formal adjoint of the
operator ${\cal L}$ with respect to the Lebesgue measure, and set
\begin{equation*}
\mathcal{C}_{\tau} = \left[ c^*\tau, +\infty \right) \times \Theta,
\end{equation*}

\begin{lemma}\label{lem:weightzeta}
There exist functions $\zeta$ and $f$ solving
%\begin{eqnarray}
%&&\partial_\tau \zeta = \mathcal{L} \left( \zeta \right),~
%\hbox{ for $(x,\theta) \in \mathcal{C}_\tau $,} 
%\nonumber
%%(t,x,\theta) \in \R^+ \times(c^*t - r \log(1 + t/T),\infty)\times \Theta,
%%&\text{ for all } (t,\theta) \in \R^+ \times \Theta,
%\\
%&&\zeta_\theta(\tau,x,\underline{\theta}) = \zeta_\theta(\tau,x,\overline{\theta}) = 0,\nonumber \\
%%&\text{ for all } (t,x)\in \R^+\times [c^*t - r\log(1+t/T)\\
%&&	\zeta(\tau,c^*\tau,\theta) = 0.\nonumber
%\end{eqnarray}
\begin{equation}\label{sep106}
\left\{
\begin{array}{ll}
\mu\partial_\tau \zeta = \mathcal{L}  \zeta,&~
\hbox{ on $ \mathcal{C}_\tau $,}\medskip\\
\pdr{\zeta}{\nu_\theta}=0,&\hbox{ on $\partial\Theta$}, \medskip\\
\zeta(\tau,c^*\tau,\cdot) = 0,
\end{array}
\right.
%	\begin{split}
%		&\mu\partial_\tau \zeta = \mathcal{L}  \zeta  ,~
%\hbox{ on $ \mathcal{C}_\tau $,}\\
%		&\pdr{\zeta}{\nu_\theta}=0\hbox{ on $\partial\Theta$}, \\
%		&\zeta(\tau,c^*\tau,\cdot) = 0,
%	\end{split}
	~~\text{ and }~~
		\begin{array}{ll}
	\mu \partial_\tau f = -\mathcal{L}^* f,&~
\hbox{ on $\mathcal{C}_\tau $,}\medskip\\
		\pdr{f}{\nu_\theta}=0&\hbox{ on $\partial\Theta$},\medskip\\
	f(\tau,c^*\tau,\cdot) = 0,
	\end{array}
%	
%	\begin{split}
%	&\mu \partial_\tau f = -\mathcal{L}^* f,~
%\hbox{ on $\mathcal{C}_\tau $,}\\
%		&\pdr{f}{\nu_\theta}=0\hbox{ on $\partial\Theta$}, \\
%		&f(\tau,c^*\tau,\cdot) = 0,
%	\end{split}
\end{equation}
such that $f_\tau, \zeta_\tau \leq 0$. Moreover,  there exists a constant $C>0$ such that all $x \geq c^* \tau$, 
\begin{equation*}
C^{-1} \left( x - c^* \tau \right) \leq \zeta(t,x,\theta), f(t,x,\theta) \leq C \left( x - c^* \tau \right),
\end{equation*}
and $|\partial_\tau f|, |\partial_\tau \zeta| \leq C$.
%\begin{enumerate}
%\item There exists a constant $C>0$ such that all $x \geq c^* \tau$, 
%\begin{equation*}
%C^{-1} \left( x - c^* \tau \right) \leq \zeta(t,x,\theta) \leq C \left( x - c^* \tau \right).
%\end{equation*}
%\item
%\begin{equation*}
%\forall x \geq c^*\tau, \qquad \vert \partial_\tau \zeta \vert \leq C, \qquad \zeta \geq C \left(x - c^* \tau \right).   
%\end{equation*}
%\end{enumerate}
\end{lemma}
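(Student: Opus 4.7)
I would construct $\zeta$ and $f$ as traveling profiles in the moving frame, i.e.\ $\zeta(\tau,x,\theta) = Z(x-c^*\tau,\theta)$ and $f(\tau,x,\theta)=F(x-c^*\tau,\theta)$. Substituting these ans\"atze into \eqref{sep106} and using the explicit form \eqref{aug3164} of $\mathcal{L}$ (together with the characterization \eqref{sep102} of $\beta$), the parabolic problem with moving Dirichlet boundary collapses to a stationary elliptic boundary-value problem on the half-cylinder $\{y>0\}\times\Theta$, with $Z(0,\cdot) = 0$, Neumann condition on $\partial\Theta$, and analogously for $F$. Since $\partial_\tau\zeta = -c^*Z_y$ (and the same for $f$), every conclusion of the lemma follows provided I exhibit $Z$ and $F$ which are nonnegative, nondecreasing in $y$, satisfy two-sided linear bounds $C^{-1}y\le Z, F\le Cy$, and have bounded $y$-derivative.

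\textbf{Existence with uniform linear bounds.} For each $L>0$ I would solve the reduced elliptic equation on the truncated strip $(0,L)\times\Theta$ with $Z_L(0,\cdot)=0$, $Z_L(L,\cdot)=L$ and Neumann data on $\partial\Theta$; this is standard since the operator has no zeroth-order term and vanishes on constants, so Fredholm theory plus the maximum principle gives a unique solution. To obtain the linear bounds uniformly in $L$, I would construct explicit barriers of the form
\begin{equation*}
Z^{\pm}(y,\theta) = \alpha^{\pm}\,y + \psi^{\pm}(\theta) + \gamma^{\pm},
\end{equation*}
where $\psi^{\pm}$ solves a Neumann cell problem on $\Theta$ designed to cancel the $\theta$-dependent residual drift one picks up by plugging $\alpha^{\pm}y$ into the moving-frame operator. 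The solvability of this cell problem is precisely the spectral identity \eqref{rel3}/\eqref{aug3170} that defines $c^*$; tuning $\alpha^{\pm}$ and $\gamma^{\pm}$ then turns $Z^{\pm}$ into a genuine super-/sub-solution on $(0,L)\times\Theta$ with the correct orderings at $y=0$ and $y=L$. Comparison gives $Z^{-}\le Z_L\le Z^{+}$, Schauder estimates provide uniform local $C^{2,\alpha}$ control, and a diagonal extraction produces a limit $Z$ on $\{y>0\}\times\Theta$ inheriting the two-sided linear bounds.

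\textbf{Monotonicity, derivative bounds, and the adjoint.} Differentiating the equation in $y$ yields an elliptic equation for $W:=Z_y$; since $Z>0$ strictly inside the half-cylinder by the lower barrier, Hopf's lemma forces $W(0,\cdot)\ge 0$, Neumann conditions on $\partial\Theta$ pass to $W$, and the linear upper bound on $Z$ combined with interior gradient estimates gives $W$ bounded at $y=+\infty$. The strong maximum principle then gives $W\ge 0$ globally, which translates into $\zeta_\tau\le 0$ and $|\zeta_\tau|\le C$. The construction of $F$ is entirely parallel: the formal adjoint $\mathcal{L}^*$ differs from $\mathcal{L}$ only through the sign of the effective drift in $y$, and the same spectral identity controls the solvability of the corresponding cell problem, so the barrier/comparison/maximum-principle chain runs unchanged. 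The main obstacle I anticipate is the second step---naive purely-linear profiles $\alpha y$ fail to be sub/super-solutions because the residual drift has no vanishing Lebesgue mean on $\Theta$; it is the $\mu$-weighted identity \eqref{aug3170} that allows the $\theta$-correctors $\psi^{\pm}$ to absorb this obstruction and close the ansatz.
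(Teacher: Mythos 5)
Your construction is sound, but it follows a genuinely different route from the one the paper relies on. The paper omits the proof and defers to \cite{HNRR12}, where $\zeta$ and $f$ are obtained \emph{parabolically}: one solves the moving-boundary initial-value problems with (essentially) linear initial data and then proves $\zeta_\tau,f_\tau\le 0$ and the two-sided linear bounds dynamically, by comparison arguments in time. You instead build exact traveling profiles $Z(x-c^*\tau,\theta)$, $F(x-c^*\tau,\theta)$ by elliptic means: corrector $\psi$ from a Neumann cell problem on $\Theta$, truncated strips with the exact barriers $y+\psi+\mathrm{const}$, comparison, Schauder, and a diagonal limit; monotonicity and the bound on $\partial_\tau\zeta=-c^*Z_y$ then come from the profile itself. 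Both hinge on the same structural fact, namely that the spectral identity \eqref{rel3}/\eqref{aug3170} makes the $\mu$-weighted drift have average $c^*$, so the cell problem is solvable (your closing remark correctly identifies that the weighting must be by $(Q^*)^2$, which is also how \eqref{aug3164} should be read). Your approach buys a time-independent object in the moving frame, at the cost of an elliptic existence argument on an unbounded cylinder; the parabolic route gets existence for free and pays with dynamic comparison arguments. Three points in your sketch deserve one more line each if written out: (i) the inherited barrier bounds are $y\pm\operatorname{osc}_\Theta\psi$, so the lower bound $C^{-1}y\le Z$ near $y=0$ is not ``inherited'' but requires the uniform Hopf estimate $Z_y(0,\cdot)\ge c_0>0$ (available by compactness of $\Theta$ and up-to-boundary regularity); (ii) the boundedness of $W=Z_y$ should be derived from oscillation-based interior estimates (constants solve the equation, and the barriers bound $\operatorname{osc}Z$ on unit cylinders), since a sup-based gradient estimate only yields linear growth; (iii) the conclusion $W\ge 0$ on the unbounded half-cylinder is not the bare strong maximum principle but a Phragm\'en--Lindel\"of comparison, e.g.\ with $W+\delta\left(y+\psi+C\right)$ and $\delta\to0$, which your exact linear solutions make available.
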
 
We omit the proof as it is very close to~\cite{HNRR12}.

\subsection{The proof of the upper bound}\label{sec:upper_bound}

We now prove the upper bound in (\ref{mar1822}), namely,
there exists a positive constant such that
\begin{equation}\label{sep110}
p(\tau,x,\theta) \leq \frac{C_0(x-c^*\tau)}{(\tau+1)^{3/2}},
\end{equation}
for all $\tau >0$, $x>c^*t$ and $\theta\in\Theta$.  
We use a standard strategy: a Nash-type inequality is used
to obtain the $L^2$ decay in terms of the $L^1$ norm,
and then the uniform decay follows by a duality argument.
%The argument follows the same structure as~\cite[Lemma~2.1]{HNRR12}
 %We note that we require only (\ref{aug3152}) for the proof of the upper bound in Proposition~\ref{p:p_decay}.  
%However, the proof of the lower bound 
%will require (\ref{aug3150}) as well.   
%
%
%Since the proofs of \Cref{lem:weightzeta} and \Cref{lem:I_constant} are almost exactly as in \cite{HNRR12}, we omit them.   
%The only change is in \Cref{lem:I_constant} and it is the observation that the $L^1\to L^\infty$ decay of 
%the heat equation in a cylinder $\R\times \Theta$ is the same as in $\R$, which is proved in~\cite[Lemma~3.1]{FanKisRyz}.  
%The interested reader can consult \cite[Section 3.2 and Lemma 5.4]{HNRR12} for more details regarding the proofs.
%
%We are now ready to prove the upper bound in (\ref{mar1822}):
%%\begin{lemma}\label{lem:upper_bound}
%%Given $p$ that satisfies~\eqref{mar3024bis}, 
%there exists a positive constant such that
%\begin{equation}\label{sep110}
%p(\tau,x,\theta) \leq \frac{C_0(x-c^*\tau)}{(\tau+1)^{3/2}},
%\end{equation}
%for all $\tau >0$, $x>c^*t$ and $\theta\in\Theta$.  
%%For $\tau > 2C (\int_{\cC_0}p_0^2dxd\theta)^\frac{d-2}{3+d}$, $C_0 \leq C\int_{\cC_0} (1+x)p_0dxd\theta.$
%%\end{lemma}
%%\begin{proof}
%While the proof follows the same structure as~\cite[Lemma~2.1]{HNRR12}, the difference between \Cref{p:Nash} 
%and the standard Nash inequality causes some changes and we present the proof for the reader convenience.

We first derive an $L^1-L^2$ bound.  
Using (\ref{aug3156})-(\ref{aug3164}), integrating by parts gives 
that for any~$\tau > 0$, we have
\begin{equation}\label{eq:p_diff_inequality}
\frac{1-\omega}{2} \frac{d}{d\tau}\int_{\cC_\tau} 
\mu(\theta) p(\tau,x,\theta)^2 dxd\theta
= - \int_{\cC_\tau} \mu(\theta) \left[ 
D(\theta) p_x(\tau,x,\theta)^2 + |\nabla_\theta p(\tau,x,\theta)|^2\right] dxd\theta.
\end{equation}
The dissipation in the 
right side may be estimated using a Nash type inequality for half-cylinders
of the form $\Omega = [0,\infty) \times \Theta$, with $\Theta\subset\Rm^d$, 
for functions such that $\phi(0,\cdot) \equiv 0$:
\begin{equation}\label{aug3152}
\Vert \nabla \phi \Vert_2^2
\geq C
\left(1+ \left( \frac{\Vert \phi \Vert_2}{\Vert x\phi \Vert_1} \right)^\frac{10d}
{3(3+d)}\right)^{-1}{ \Vert \phi \Vert_2^2 
\left( \frac{\Vert \phi \Vert_2}{\Vert 
x\phi \Vert_1} \right)^\frac{4}{3} }.
%{1+ \left( \frac{\Vert \phi \Vert_2}{\Vert \phi \Vert_1} \right)^\frac{10d}{3(3+d)}}.
\end{equation}
%We point out that this gives us that $\int_{\cC_\tau} \nu p(\tau, x, \theta)^2 dx d\theta$ decreases in time.
The proof of the 
one-dimensional version of (\ref{aug3152}) can be found in~\cite{HNRR12}.
We describe the required modifications for $d>1$ in \Cref{sec:nash}.
This gives:
\begin{equation}\label{eq:Nash_application}
\int_{\cC_\tau} \mu(\theta) \left[ D(\theta) p_x(\tau,x,\theta)^2 + 
|\nabla_\theta p(\tau,x,\theta)|^2\right] dxd\theta
\geq 
CI_2(\tau)
\Big(1 + \left(\frac{I_2(\tau)^{1/2}}
{I_1(\tau)}\right)^{\farc{10d}{3(3+d)}}\Big)^{-1}
\left( \frac{I_2(\tau)^{1/2}}{I_1(\tau)}\right)^{\farc{4}3} .
%{1 + \left(\frac{I_2(\tau)^{1/2}}{I_1(\tau)}\right)^{10d/3(3+d)}},
\end{equation}
Here, we have defined
\[
\begin{split}
&I_1(\tau):= \int_{\cC_\tau} 
\mu(\theta) (x-c^*\tau)p(\tau,x,\theta) dxd\theta,  ~~\text{ and},\\
&I_2(\tau):= \int_{\cC_\tau} \mu(\theta)p(\tau,x,\theta)^2 dxd\theta.
\end{split}
\]
We point out that we used in~\eqref{eq:Nash_application} that $\mu$ is bounded uniformly away from $0$ and $\infty$.

Next, we look at
\[
I(\tau):{=} \int_{\cC_\tau} \mu(\theta) f(\tau,x,\theta) p(\tau,x,\theta) dx d\theta,
\]
with $f$ as in (\ref{sep106}). 
If $\omega \equiv 0$, then $I(\tau)$ is a conserved quantity. In general, following the proof of~\cite[Lemma~5.4]{HNRR12}, one can show that
there exists a constant $C>0$ such that
\begin{equation}\label{sep112}
C^{-1} I(0) \leq I(\tau) \leq C\left(\int_{\cC_0} p_0 dxd\theta + I(0)\right).
\end{equation}
%In order to understand the right hand side of~\eqref{eq:Nash_application}, we 
%examine $\int_{\cC_\tau}  \nu\,(x-c^*\tau)\,p\, dxd\theta$.  
Using \Cref{lem:weightzeta}, we see that $I(\tau)$ and $I_1(\tau)$ are comparable:
\[
\frac{1}{C}\int_{\cC_\tau}\mu(\theta)f(\tau,x,\theta) p(\tau,x,\theta) dxd\theta
\leq I_1(\tau)
\leq C\int_{\cC_\tau}  \mu(\theta)f(\tau,x,\theta) p(\tau,x,\theta) dxd\theta.
\]
As a consequence, we have
\begin{equation}\label{eq:conservation}
C^{-2} I_1(0)\leq C^{-1} I_1(\tau)
\leq N :=\left( \int_{\cC_0} p_0 dxd\theta + I_1(0)\right).
\end{equation}
%We may assume that $N =1$ without loss of generality.

%Using~\eqref{eq:conservation}, we have that the denominator of the right hand side of~\eqref{eq:Nash_application} is bounded above as
%\begin{equation}\label{eq:denominator}
%	1 + \left(\frac{I_2(\tau)^{1/2}}{I_1(\tau)}\right)^{5/6}
%		\leq 1 + C \left(\frac{I_2(0)}{I_1(0)}\right)^{5/6},
%\end{equation}
Using~\eqref{eq:conservation} together 
with~\eqref{eq:p_diff_inequality} and~\eqref{eq:Nash_application}, we obtain
%\begin{equation}\label{sep302}
%\Big(1 + \left(\frac{I_2(\tau)^{1/2}}
%{I_1(\tau)}\right)^{\farc{10d}{3(3+d)}}\Big) \farc{I_1^{4/3}}{I_2^{5/3}}I_2'(\tau)
%\leq - \frac{1}{C(1 - \omega(\tau))}.
%\end{equation}
\begin{equation}\label{eq:Nash_diff_inequality}
	\left(\frac{I_1(\tau)^{4/3}}{I_2(\tau)^{5/3}} + \frac{I_1(\tau)^{(4-2d)/(3+d)}}{I_2(\tau)^{5/(3+d)}}\right)I_2'(\tau)
		\leq - \frac{1}{C(1 - \omega(\tau))}.
\end{equation}
An elementary argument, starting with this 
differential inequality, using the decay assumptions on~$\omega$ and (\ref{eq:conservation}), gives
an upper bound
\begin{equation}\label{sep120}
I_2(\tau) % \leq R(\tau) 
\leq \frac{CN^2}{(\tau + 1)^{3/2}},
\end{equation}
regardless of the cross-section dimension $d\ge 1$. In other words, we have the
bound
\begin{equation}\label{sep306}
\|p(\tau,\cdot)\|_{L^2\left(\mathcal{C}_\tau\right)}\leq \frac{C}{(\tau + 1)^{3/4}}\int_{{\mathcal C}_0}(1+x)|p_0(x,\theta)|dx d\theta.
\end{equation}

%
%integrating the right side yields 
%a term of the order ${\tau}/C$.  The right hand side is split into three cases: $d=1$, $d=2$, and $d \geq 3$.  We omit $d=2$ since it is proved similarly to $d\geq 3$.  We first show $d\geq3$.  To this end, integrating~\eqref{eq:Nash_diff_inequality}, we obtain
%\[
%	- \frac{1}{I_2^{2/3}} + \left(I_2^\frac{d-2}{3+d} - I_2(0)^\frac{d-2}{3+d} \right) \leq - \frac{\tau}{C}.
%\]
%Clearly for $\tau > 2C I_2(0)^\frac{d-2}{3+d}$, we may conclude that $I_2(\tau) \leq C \tau^{-3/2}$.
%
%When $d = 1$, this differential inequality yields
%\[
%%	- \left(\frac{I_1(0)^{4/3}}{I_2^{2/3}} + \frac{CI_1(0)^{1/2}}{I_2^{1/4}}\right)
%	- \left(\frac{1}{I_2^{2/3}} + \frac{1}{I_2^{1/4}}\right)
%		\leq - \frac{\tau}{C}.
%\]
%%Defining $Z(\tau) = I_2(\tau)^{1/2}/I_1(0)$, we may re-write this in the clearer form
%%\[
%%	\frac{Z^{4/3}}{1 + C Z^{5/6}} \leq \frac{C}{\tau}.
%%\]
%We may re-write this in the clearer form
%\[
%	\frac{I_2^{2/3}}{1 + C I_2^{5/12}} \leq \frac{C}{\tau}.
%\]
%Since the function on the left is monotonically increasing in $I_2$, then we have that $I_2 \leq R$ where we define $R(\tau)$ to be the unique solution to
%\[
%	\frac{R^{2/3}}{1 + C R^{5/12}} = \frac{C}{\tau}.
%\]
%We point out that one easily obtains from the above the asymptotic bound
%\[
%	I_2(\tau) \leq R(\tau) \leq \frac{C}{(\tau + 1)^{3/2}}.
%\]
%
%Define $\tilde R(\tau)$ to be $R(\tau)$ when $d=1$ and $C/\tau^{3/2}$ 
%when $d\geq 2$.  Consider only $\tau \geq 4 C I_2(0)^\frac{d-2}{3+d}$.  

We may now apply the standard duality argument. Let $S_\tau$ be the solution operator mapping~$p_0$ to $p(\tau,\cdot)$.
The bound (\ref{sep120}) applies that $S_\tau^*$ satisfies  
\begin{equation}\label{sep304}
|S_\tau^*p_0|\le  \frac{C(1+x-c^*\tau)}{(\tau + 1)^{3/4}}
\|p_0\|_{L^2\left(\mathcal{C}_0\right)}.
\end{equation}
%\[
%	\min \left\{ S_\tau^*p_0(x,\theta), S_\tau^*p_0(x,\theta)/(x-c^*\tau)\right\}
%		\leq C \tilde R(\tau) (x-c^*\tau) \left( \int_{\cC_0} p_0(x,\theta)^2 dxd\theta\right)^{1/2}.
%\]
However, $S_\tau^*$ is the solution operator for a parabolic equation of the same type, except for the reverse drift direction, thus it also
obeys the bound (\ref{sep306}), and hence $S_\tau$ itself obeys (\ref{sep304}) as well. 
Decomposing $S_\tau =S_{\tau/2}\circ S_{\tau/2}$ and applying the bounds  (\ref{sep306}) and (\ref{sep304})
separately, we get
\begin{equation}\label{sep308}
|p(\tau,x,\theta)|\le  \frac{C(1+x-c^*\tau)}{(\tau + 1)^{3/2}}
\int_{\cC_0} (1+x)p_0(x,\theta) dxd\theta .
\end{equation}
%
%\[
%	\min\left\{p(\tau,x,\theta), \frac{p(\tau,x,\theta)}{x-c^*\tau}\right\}
%		\leq C \tilde R(\tau)^2 N.
%\]
This proves (\ref{sep110}) for $x>c^*\tau+1$. 
However, as $p(\tau,c^*\tau,\cdot)=0$, using the parabolic regularity for~$x\in(c^*\tau,c^*\tau+1)$, we obtain the upper bound  (\ref{sep110}) for
all $x>c^*\tau$.~$\Box$

\subsection{The lower bound for $p$}

We now prove the lower bound on $p$ in Proposition~\ref{p:p_decay}, namely,
there exists a positive constant such that
\begin{equation}\label{sep110bis}
p(\tau,x,\theta) \geq \frac{(x-c^*\tau)}{C_0(\tau+1)^{3/2}},
\end{equation}
for all $\tau >0$, $x>c^*t$ and $\theta\in\Theta$.  

\subsubsection*{Approximate solutions}

For the proof of Proposition~\ref{p:p_decay} will make use of  approximate solutions of our problem that
satisfy the bounds claimed in this Proposition.
Let $Q_\lambda$ be the eigenfunction in (\ref{mar1806}), and set %denote the derivative of $Q$ in $\lambda$ (cf. \Cref{e:Q}).  Then define
\begin{equation}\label{e:chi}
\chi = -\frac{1}{Q_\lambda}\frac{\partial Q_\lambda}{\partial\lambda}\Big|_{\lambda=\lambda^*},
\end{equation}
and
\begin{equation}\label{e:overline_Dbis}
	\overline{D}:{=} \left(\int_{\Theta} \left( D + c^* \chi - 2\lambda^* D \chi - A \chi\right) (Q^*)^2 d\theta\right) \left( \int_{\Theta} (Q^*)^2 d\theta \right)^{-1}.
\end{equation}
To see that $\overline{D} > 0$, we differentiate~\eqref{rel4} in $\lambda$ to obtain
\begin{equation}\label{sep402}
\int_\Theta \left[2Q\pdr{Q}\lambda \left(c'(\lambda)\lambda + c(\lambda) - 2\lambda D - A\right) + Q^2\left( c''(\lambda)\lambda + 2c'(\lambda) - 2 D - A\right) \right]d\theta=0.
\end{equation}
%\[\begin{split}
%	0 &= \frac{d}{d\lambda} \left( \int_\Theta Q^2 \left( c'(\lambda)\lambda + c(\lambda) - 2\lambda D - A\right) d\theta\right)\\
%	&= \int_\Theta \left[2QQ_\lambda \left(c'(\lambda)\lambda + c(\lambda) - 2\lambda D - A\right) + Q^2\left( c''(\lambda)\lambda + 2c'(\lambda) - 2 D - A\right) \right]d\theta. 
%\end{split}\]
Evaluating (\ref{sep402}) at $\lambda = \lambda^*$, we obtain, as $c'(\lambda^*) = 0$:
\[
	0 = \int_\Theta \left[- 2 (Q^*)^2 \chi \left(c^* - 2\lambda^* D - A\right) + (Q^*)^2\left( c''(\lambda)\lambda - 2 D - A\right) \right]d\theta.
\]
Now, (\ref{e:overline_Dbis}) and  \eqref{rel4} show that this is
\[
	c''(\lambda^*) \lambda^* \int_\Theta (Q^*)^2 d\theta = 2\overline{D} \int_\Theta (Q^*)^2 d\theta.
\]
Since $c''(\lambda^*) > 0$ by \Cref{propspec}, we conclude that $\overline{D} > 0$.

The approximate solutions are described by the following analogue of~\cite[Proposition~5.2]{HNRR12}.
\begin{prop}\label{p:approximate_soln}
Let $\overline \chi \in \R$, then there is a function $S(\tau, x,\theta)$ such that, for any $\sigma>0$,
\begin{equation}\label{e:approximate_soln}
(1 - \omega ) \frac{\partial S}{\partial \tau} - D S_{xx} - 
 \Delta_\theta S + \left( 2 \lambda D + A \right) S_x  - \farc{2}{Q^*} \nabla_\theta {Q^*}\cdot\nabla_\theta S
 = O(\tau^{-3})
\end{equation}
and
\begin{equation}\label{e:approximate_gaussian}
\left|S(\tau, x,\cdot) - \frac{x-c^*\tau + \chi + \overline\chi}{\tau^{3/2}} e^{- \frac{(x-c^*\tau)^2}{4\overline{D} \tau}}
\right|\leq C \tau^{-3/2} \left( \frac{x- c^*\tau}{\sqrt{\tau}}\right)^2 + O(\tau^{-2}),
\end{equation}
for all $x \in [c^*\tau, c^*\tau + \sigma \sqrt{\tau}]$.  The constant $C$ depends on $\sigma$.
\end{prop}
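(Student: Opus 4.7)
The plan is to construct $S$ via a two-scale asymptotic ansatz in the moving frame $y = x - c^*\tau$, using the self-similar scale $y \sim \sqrt\tau$. The natural leading piece is the Gaussian dipole
\[
S_0(\tau, y, \theta) = \frac{y + \chi(\theta) + \overline\chi}{\tau^{3/2}}\, e^{-y^2/(4\overline D \tau)},
\]
with $\chi$ from (\ref{e:chi}) and $\overline D$ from (\ref{e:overline_Dbis}); one then sets $S = S_0 + \Phi$ for a corrector $\Phi$ whose role is to drive the PDE residual down to $O(\tau^{-3})$.

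Substituting $S_0$ into (\ref{e:approximate_soln}) and organizing the residual by powers of $\tau^{-1/2}$ on the window $|y| \le \sigma \sqrt\tau$, one finds that at leading order the coefficient of $G(\tau,y) = \tau^{-3/2} e^{-y^2/(4\overline D\tau)}$ is
\[
(2\lambda^* D + A - c^*) - \Delta_\theta \chi - \frac{2}{Q^*}\nabla_\theta Q^* \cdot \nabla_\theta \chi.
\]
This expression vanishes precisely when $\chi$ solves the corrector equation obtained by differentiating (\ref{mar1806}) in $\lambda$ at $\lambda = \lambda^*$, whose unique solution modulo constants (absorbed into $\overline\chi$) is given by (\ref{e:chi}). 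The Fredholm solvability condition for this corrector equation, obtained by integrating against $(Q^*)^2$, is exactly (\ref{rel3}).

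The remaining residual splits into a $\theta$-oscillating part proportional to $(2\lambda^* D + A - c^*)G_y$ (with zero $(Q^*)^2$-average again by (\ref{rel3})), which is annihilated by an explicit corrector $\Phi_1$ solving a Poisson-type problem on $\Theta$; and a $\theta$-averaged part of size $O(\tau^{-2})$ which, once re-expressed via the corrector equation, reconstitutes precisely the effective heat operator $\partial_\tau - \overline D \partial_{yy}$ acting on the dipole, with $\overline D$ as in (\ref{e:overline_Dbis}). This last identification is exactly the computation carried out in the excerpt from (\ref{sep402}) to the formula $c''(\lambda^*)\lambda^* = 2\overline D$. Leftover $O(\tau^{-2})$ pieces together with the $\omega \partial_\tau S_0$ contribution are absorbed into a final corrector $\Phi_2$ of size $O(\tau^{-2})$; the decay assumption $\tau\omega(\tau) \to \overline\omega$ ensures that $\omega \partial_\tau S_0$ enters at a level compatible with $O(\tau^{-3})$ after accounting for these corrections.

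The main obstacle is the delicate bookkeeping at each order: one must carefully decompose each residual term into its $(Q^*)^2$-averaged and $(Q^*)^2$-oscillating parts, invoke the correct Fredholm solvability condition to determine $\chi$ and $\overline D$, and track the $y$-dependence of the correctors so that the approximation bound (\ref{e:approximate_gaussian}) holds with the stated $\tau^{-3/2}(y/\sqrt\tau)^2$ term coming from the Taylor expansion of the Gaussian across the window and the $O(\tau^{-2})$ term coming from $\Phi$. The argument follows the template of \cite[Proposition 5.2]{HNRR12}; the only substantive change is that the cross-section here is a bounded Neumann domain $\Theta \subset \R^d$ rather than a periodic cell, but the Krein--Rutman framework of Section~\ref{sec:spectral} supplies $Q^*$ and ensures the corrector $\chi$ exists, so the asymptotic analysis goes through unchanged.
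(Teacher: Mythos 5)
Your overall strategy coincides with the paper's: a self-similar expansion around the Gaussian dipole in the frame $y=x-c^*\tau$, with $\theta$-correctors obtained from Fredholm solvability against $(Q^*)^2$, where \eqref{rel3} gives the first solvability condition and the identity $c''(\lambda^*)\lambda^*=2\overline{D}$ produces the effective diffusivity \eqref{e:overline_Dbis}. However, the bookkeeping you describe stops one order too early, and that missing order is exactly the nontrivial part of the proof. A first, local, imprecision: when you plug $S_0=(y+\chi+\overline\chi)\tau^{-3/2}e^{-y^2/4\overline D\tau}$ into the equation, the drift coefficient $(2\lambda^*D+A-c^*)$ multiplies $\partial_y S_0$, not the plain Gaussian $G=\tau^{-3/2}e^{-y^2/4\overline D\tau}$; only the $G$-proportional parts cancel against $L\chi$ (with $\chi$ from \eqref{e:chi}), and what is left at order $\tau^{-3/2}$ is $(2\lambda^*D+A-c^*)\,y\,G_y\sim \tau^{-3/2}z^2e^{-z^2/4\overline D}$, whereas you describe the leftover as ``proportional to $G_y$'', which is only $O(\tau^{-2})$ on the window. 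This leftover does have zero $(Q^*)^2$-average and is removable by a corrector of the form $\chi_0(\theta)\,y\,G_y$ -- this is precisely why the paper's first corrector is $S^1=\chi_0 S^0_z+\phi_1$ rather than $\chi_0 G$ -- and it stays inside the error allowance of \eqref{e:approximate_gaussian}, but your stated size for it is off by half an order.

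The more serious gap is at the next order. A corrector of amplitude $O(\tau^{-2})$ (your $\Phi_2$), once hit by the drift terms, generates new residual of size $O(\tau^{-5/2})$; moreover, in the fixed frame $\partial_\tau S_0=O(\tau^{-3/2})$ (it contains $-c^*\partial_y S_0$), so the $\omega$-contribution is $\omega\,\partial_\tau S_0=O(\tau^{-5/2})$, not $O(\tau^{-3})$, and its $(Q^*)^2$-average is generically nonzero, so it cannot be removed by a $\theta$-Poisson solve alone. Your two-corrector scheme therefore leaves an $O(\tau^{-5/2})$ residual and does not reach the $O(\tau^{-3})$ required in \eqref{e:approximate_soln}. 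The paper handles this by a second solvability condition at order $\tau^{-5/2}$, namely \eqref{sep408}, which determines the $\theta$-independent, $z$-dependent part $\phi_1=z^2\overline\phi$ of the first corrector -- this is exactly where $\overline\omega$ enters through the term $\overline\omega c^*S^0_z$ -- together with one further layer $S^3/\tau^{5/2}$; the bound $|\phi_1(z)|\le Cz^2$ is also what keeps these additional terms compatible with \eqref{e:approximate_gaussian}. Without this extra order and its solvability condition, your construction neither achieves the stated residual nor explains how the $\omega$-term is cancelled, so the claim that ``the asymptotic analysis goes through unchanged'' is not yet justified.
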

%We utilize this approximate solution to create sub- and super-solutions of $p$.
The approximate solutions do approximate true solutions on $ [c^*\tau, c^*\tau + \sigma \sqrt{\tau}]$, as seen from the following.
\begin{prop}\label{p:approximate_soln2}
Fix $\sigma >0$, and let $S$ be as in Proposition~\ref{p:approximate_soln}.  Suppose that $\xi$ satisfies for $\tau > 0$,
\begin{equation}\label{apr102}
\left\{
\begin{array}{ll}
(1 - \omega)\xi_\tau = D \xi_{xx} +  \Delta_\theta\xi
- \left( 2\lambda^* D + A \right) \xi_x + \frac{2 }{Q^*}\nabla Q^* \cdot\nabla\xi,& \qquad x \in [c^*\tau, c^*\tau + \sigma \sqrt{\tau}],\medskip\\
% (\tau,x,\theta) \in \R^+ \times [c^*\tau, \infty)\times \Theta,\\
 \xi(\tau, c^*\tau,\cdot) = S(\tau,c^*\tau, \cdot),& \medskip \\ %&\text{ for all } (\tau,\theta) \in \R^+\times \Theta,\\
\xi(\tau,c^*\tau + \sigma\sqrt{\tau},\cdot) = S(\tau, c^*\tau + \sigma \sqrt{\tau}, \cdot).
%&\text{ for all } \theta \in [c^* \tau, \infty) \times \Theta.
\end{array}
\right.
\end{equation}%\]
Then there is a positive constant $\tau_0$ such that, if $\tau \geq \tau_0$ and $x-c^*\tau \in(0,\sigma\sqrt{\tau})$, then
\[
	| \left( \xi - S \right)(\tau, x,\cdot)| \leq \frac{C}{\tau^{3/2}}.
\]
\end{prop}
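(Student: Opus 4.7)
The plan is a comparison/maximum-principle argument applied to the difference $\eta := \xi - S$ on the moving strip. Passing to the frame $y = x - c^*\tau$ fixes the left boundary at $y = 0$, and the strip becomes $y \in [0, \sigma\sqrt{\tau}]$. Subtracting \eqref{e:approximate_soln} from \eqref{apr102}, the function $\eta$ satisfies the same linear parabolic equation as $\xi$ with Neumann condition in $\theta$, vanishing Dirichlet condition at both lateral endpoints (since $\xi$ and $S$ coincide there), and inhomogeneous source of size $O(\tau^{-3})$. The goal is to show $|\eta| \leq C\tau^{-3/2}$.

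I would construct an explicit barrier of the ``approximate-solution'' form
\[
V(\tau,y,\theta) = K\tau^{-3/2}\phi_0\!\left(y/\sqrt{\tau}\right) + K\tau^{-2}\phi_1\!\left(y/\sqrt{\tau},\theta\right),
\]
with $\phi_0(z) = z(\sigma - z)$, so that $V \geq 0$ and vanishes on the $y$-boundary. Expanding the super-solution inequality by powers of $\tau$: the leading $\tau^{-5/2}$ contribution, driven by $-DV_{yy}$, equals $K[\,2D - 2\sigma z + (5/2)z^2\,]$, which is uniformly positive on $z \in [0,\sigma]$ provided $\sigma$ is small enough. The next-order $\tau^{-2}$ terms combine the $y$-drift $-[(2\lambda^*D + A) - (1-\omega)c^*]\partial_y$ acting on the leading piece with the $\theta$-Laplacian acting on $K\tau^{-2}\phi_1$; setting their sum to zero yields the $\theta$-Poisson problem
\[
\nabla_\theta\!\cdot\!(\mu\nabla_\theta\phi_1) = \mu\bigl[c^* - (2\lambda^*D + A)\bigr]\phi_0'(z), \qquad \pdr{\phi_1}{\nu_\theta} = 0,
\]
whose Fredholm compatibility condition $\int_\Theta \mu[c^* - (2\lambda^*D + A)]\,d\theta = 0$ is exactly the Rayleigh identity \eqref{rel3} characterizing the minimal speed $c^*$. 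With this $\phi_1$ in hand, all remaining contributions are $O(\tau^{-5/2})$ or smaller and comfortably dominate the $O(\tau^{-3})$ source once $\tau$ is large. Choosing $K$ large enough that $V(\tau_0,\cdot) \geq |\eta(\tau_0,\cdot)|$ initially (possible by parabolic regularity, as both functions vanish on the lateral boundary) and applying the maximum principle to $\pm V$ then gives $|\eta| \leq V \leq (K\sigma^2/4)\tau^{-3/2}$.

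The main obstacle is the $y$-drift term, which contributes at order $\tau^{-2}$---strictly larger than the $\tau^{-5/2}$ diffusion gain---and has no definite sign pointwise, so it cannot be absorbed into the leading $\tau^{-5/2}$ margin. Its elimination requires the $\theta$-corrector $\phi_1$, whose existence rests precisely on the Rayleigh identity \eqref{rel3}. This is exactly the mechanism underlying the construction of the approximate solution $S$ itself and the analogous step in the one-dimensional argument of \cite{HNRR12}, but its use here for the error equation is the crucial technical point.
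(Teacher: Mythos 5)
Your barrier/maximum-principle route is genuinely different from the paper's proof, which is an energy estimate on the difference $\xi-S$ following \cite[Proposition~5.3]{HNRR12}; that in itself would be fine, but as written the barrier computation has a gap at its central step. Any solution of your $\theta$-Poisson problem has the form $\phi_1(z,\theta)=\phi_0'(z)\chi_0(\theta)$ (up to a function of $z$ alone), where $L\chi_0=c^*-2\lambda^*D-A$ is the same corrector as in \eqref{e:S_1}. The drift acting on this corrector piece of the barrier,
$\bigl(2\lambda^*D+A-(1-\omega)c^*\bigr)\,\partial_y\bigl(K\tau^{-2}\phi_1\bigr)=K\tau^{-5/2}\bigl(2\lambda^*D+A-c^*\bigr)\phi_0''(z)\chi_0(\theta)+O\bigl(K\tau^{-7/2}\bigr)$,
sits at order $\tau^{-5/2}$ --- exactly the order of your claimed margin $K\bigl[2D-2\sigma z+\tfrac52 z^2\bigr]\tau^{-5/2}$ --- it has no sign, and its size is independent of $\sigma$. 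There is no reason for $2D(\theta)$ to dominate $2\bigl|(2\lambda^*D+A-c^*)\chi_0\bigr|$ pointwise, so the super-solution inequality can fail no matter how small $\sigma$ is. Removing this term requires a further $\theta$-corrector at the $\tau^{-5/2}$ level, and its solvability condition replaces the pointwise $D(\theta)$ by the homogenized diffusivity $\overline D$ of \eqref{e:overline_Dbis}; this is precisely why the construction of $S$ in \Cref{p:approximate_soln} is pushed to the orders $S^2,S^3$ and why $\overline D>0$ is needed. Your two-term ansatz stops one order too early.

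Even after that repair, three points remain. With $\phi_1\propto\phi_0'(z)$ the barrier neither vanishes nor is automatically nonnegative at $z=0$ and $z=\sigma$ (where $\phi_0'=\pm\sigma\neq0$), so the boundary comparison $V\geq|\xi-S|=0$ on the lateral boundary is not ensured; this is fixable by adding a constant of size $C\sigma\|\chi_0\|_\infty$ at the $\tau^{-2}$ level, at the cost of a harmless $O(K\tau^{-3})$ term. Next, positivity of the corrected margin $2\overline D-2\sigma z+\tfrac52 z^2$ on $[0,\sigma]$ still requires $\sigma^2\lesssim\overline D$, whereas the proposition fixes an arbitrary $\sigma>0$ and the value of $\sigma$ used downstream is dictated by \Cref{p:p_in_front}, not chosen by you; handling general $\sigma$ would need a different profile than $z(\sigma-z)$ (or the paper's energy argument, which has no such restriction). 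Finally, the comparison also needs a bound on $(\xi-S)$ at the starting time $\tau_0$, which requires information about the data of $\xi$ there, not just parabolic regularity.
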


The proof of \Cref{p:approximate_soln2} is a relatively straightforward energy estimate of the difference~$\xi - S$ that 
can be obtained almost exactly as in~\cite[Proposition~5.3]{HNRR12}.

\subsubsection*{The size of the solution at distance $O(\sqrt{\tau})$}

Another key step is to establish the magnitude of $p$ at 
distances of the order $O(\sqrt{\tau})$ from $x=c^* \tau$.   With the following proposition, we control $p$ at the endpoints of the interval $[c^*\tau, c^* \tau + \sigma \sqrt\tau]$.  Then, the previous propositions allow us to control $p$ in the remainder of the interval as $S$ 
approximates~$p$.
\begin{prop}\label{p:p_in_front}
Let $p$ be as in Proposition~\ref{p:p_decay}.  There are constants $\sigma > 0$ and $C_0 > 0$ so that
\begin{equation}\label{sep416}
\frac{1}{C_0\tau} \leq p(\tau, c^*\tau + \sigma \sqrt{\tau}) \leq \frac{C_0}{\tau}
\end{equation}
whenever $\tau \geq 1$.
\end{prop}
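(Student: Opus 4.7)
The upper bound is immediate from~\eqref{sep110}: evaluating at $x = c^*\tau + \sigma\sqrt{\tau}$ gives
\[
p(\tau, c^*\tau + \sigma\sqrt{\tau}, \theta) \leq \frac{C_0 \sigma \sqrt{\tau}}{(\tau + 1)^{3/2}} \leq \frac{C}{\tau}
\]
for all $\tau \geq 1$. For the lower bound, my plan combines three ingredients: the conservation estimate $I_1(\tau) \geq c_0 := C^{-1} I_1(0) > 0$ from~\eqref{eq:conservation}; the pointwise upper bound $p \leq C(x - c^*\tau)/\tau^{3/2}$ from Section~\ref{sec:upper_bound}; and a Gaussian tail estimate for $p$ at scale $\sqrt{\tau}$.

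Writing $y := x - c^*\tau$, the pointwise upper bound shows that the contribution of $y \in [0, \delta\sqrt{\tau}]$ to $I_1(\tau)$ is at most $C\delta^3$, hence below $c_0/4$ for $\delta$ small. I plan to establish a Gaussian-type tail bound of the form
\[
\int_\Theta \mu(\theta)\, p(\tau, c^*\tau + y, \theta)\, d\theta \leq \frac{Cy}{\tau^{3/2}}\, e^{-y^2/(C\tau)}, \qquad y \geq \sqrt{\tau},
\]
which, in the spirit of a Dirichlet heat kernel bound, forces the contribution of $\{y \geq M\sqrt{\tau}\}$ to $I_1(\tau)$ to be $O(e^{-cM^2})$, and thus below $c_0/4$ for $M$ large. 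Combining these three facts, the annulus $A_\tau := \{y \in [\delta\sqrt{\tau}, M\sqrt{\tau}]\}$ carries mass at least $c_0/2$; via an averaging argument together with the upper bound $p \leq Cy/\tau^{3/2}$, this produces a pair $(y_*(\tau), \theta_*(\tau)) \in A_\tau \times \Theta$ with $p(\tau, c^*\tau + y_*(\tau), \theta_*(\tau)) \geq c_1/\tau$ for a constant $c_1 > 0$ independent of $\tau$.

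To turn this pointwise lower bound, taken at an a priori unknown location inside $A_\tau$, into a $\theta$-uniform bound at the specific location $y = \sigma\sqrt{\tau}$, I compare $p$ on the slab $\{y \in [0, \sigma\sqrt{\tau}]\}$ with a suitable multiple of the approximate solution $S$ from Proposition~\ref{p:approximate_soln}, choosing the free parameter $\bar\chi$ so that the leading part of $S$ vanishes at $y = 0$. Proposition~\ref{p:approximate_soln2} then controls the discrepancy between $S$ and any genuine solution of the PDE matching its boundary values. By adjusting $c_*$ so that $c_* S$ sits under $p$ both at $(y_*(\tau), \theta_*(\tau))$ and on the lateral boundary of the slab --- absorbing the residual $O(\tau^{-2})$ boundary error at $y = 0$ with a small auxiliary correction --- the maximum principle propagates $p \geq c_* S/2$ throughout the slab, and since $S(\tau, c^*\tau + \sigma\sqrt{\tau}, \theta) \gtrsim \sigma e^{-\sigma^2/(4\overline{D})}/\tau$, the conclusion follows. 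The principal technical difficulty is the Gaussian tail estimate: the $x$-drift coefficient of~\eqref{aug3156} is only zero in mean (in the moving frame) against $\mu$ by~\eqref{aug3170}, so a naive weighted energy estimate produces uncontrolled lower-order terms; the bound should instead be obtained by testing the equation against $p\mu\, e^{\varphi(\tau, y)}$ with $\varphi$ quadratic in $y/\sqrt{\tau}$, and by exploiting the corrector $\beta$ of~\eqref{sep102} to absorb the fluctuating drift, in the spirit of the self-adjoint rewriting used in Section~\ref{sec:upper_bound}.
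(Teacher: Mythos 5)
The upper bound in \eqref{sep416} is indeed immediate from \eqref{sep110}, and your bookkeeping for the lower bound --- $I_1(\tau)\ge c_0$ from \eqref{eq:conservation}, plus the $C\delta^3$ bound for the contribution of $y\le\delta\sqrt\tau$ --- matches what the paper does inside the proof of Lemma~\ref{p:p_in_front_weak}. But the two remaining steps of your plan have genuine problems. The most serious one is the propagation step. Your averaging argument produces a single point $(y_*(\tau),\theta_*(\tau))$, at the \emph{same} time $\tau$ at which you want the conclusion, where $p\ge c_1/\tau$. A comparison of $p$ with $c_*S$ on the slab $\{0\le x-c^*\tau\le\sigma\sqrt\tau\}$ cannot transfer this information sideways: the maximum principle needs domination on the whole parabolic boundary of a space--time region, in particular a lower bound for $p$ on the lateral boundary $x=c^*\tau+\sigma\sqrt\tau$ over a time interval --- which is exactly the inequality \eqref{sep416} you are trying to prove. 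Indeed, in the paper this comparison-with-$S$ argument appears \emph{after} \Cref{p:p_in_front}, in the sketch of the proof of \Cref{p:p_decay}, with \Cref{p:p_in_front} serving as the input that supplies the boundary control; using it to prove \Cref{p:p_in_front} is circular. The paper's actual route is different: Lemma~\ref{p:p_in_front_weak} gives the integral bound \eqref{sep422} on a set $I_s$ of measure $\gtrsim\sqrt{s}$ at earlier times, and then the Duhamel representation \eqref{sep426} from $s=\tau/2$ to $\tau$, combined with the Gaussian \emph{lower} bound \eqref{sep420} for the Dirichlet heat kernel $\Gamma$, yields $p(\tau,c^*\tau+\sigma\sqrt\tau,\theta)\gtrsim \frac{1}{\sqrt{\tau}}\cdot\sqrt{\tau}\cdot\frac{1}{\tau}=\frac1\tau$ uniformly in $\theta$. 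Note also that collapsing the integral information to one point discards the factor $|I_s|\sim\sqrt\tau$, so even a correct forward-in-time propagation from a single point would only produce $\tau^{-3/2}$.

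The second issue is the Gaussian tail bound you defer: it is not a side computation but the technical heart of the statement, and the tools already in the paper do not give it in the form you need. The available Gaussian bound \eqref{eq:weak_heat_kernel} has prefactor $\tau^{-1/2}$, and with it the tail of $I_1(\tau)$ beyond $M\sqrt\tau$ is only $O(\sqrt{\tau}\,e^{-cM^2})$, which is useless for fixed $M$ (and letting $M$ grow with $\tau$ pollutes the final bound with logarithms). You genuinely need the refined form with the Dirichlet prefactor $y/\tau^{3/2}$ multiplying the Gaussian, and proving that for the variable-coefficient, drifted operator behind \eqref{aug3156} is precisely the kind of work the paper does instead through the weighted functionals: the conjugate weights $\eta_\alpha$ of \eqref{sep430}, the quantity $V_\alpha$, and Lemma~\ref{lem:V_decay} ($V_{\tau^{-1/2}}(\tau)\le C\tau^{-3/2}$), which via Cauchy--Schwarz gives the exponential-at-scale-$\sqrt\tau$ tail $\lesssim N^3e^{-N/2}$ in \eqref{sep40} --- weaker than Gaussian but sufficient. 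A one-line energy estimate against $p\mu e^{\varphi}$ with quadratic $\varphi$ would at best give a weighted $L^2$ bound, after which you would still need a duality/regularity step to recover a pointwise bound with the correct boundary prefactor; in other words, you would end up rebuilding machinery of the same weight as the paper's Sections on $V_\alpha$, $I_\alpha$ and the Nash inequality. So the proposal, as it stands, is not a proof: keep the upper bound and the small-$y$/conservation part, but replace the single-point-plus-maximum-principle step by an integral bound on a set of measure $\sim\sqrt\tau$ at time $\tau/2$ propagated by a heat-kernel lower bound, and replace the claimed pointwise Gaussian tail by a weighted integral estimate of the type of Lemma~\ref{lem:V_decay}.
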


\subsubsection*{Sketch of the proof of Proposition~\ref{p:p_decay}}

We now outline how to combine \Cref{p:approximate_soln,p:approximate_soln2,p:p_in_front}  to obtain 
 the lower bound  in \Cref{p:p_decay}. 
% We prove \Cref{p:approximate_soln,p:approximate_soln2,p:p_in_front} in the sequel.  We point out that the interested reader may find the explicit details worked out in~\cite[Section~5.1]{HNRR12}.  {\color{red} I removed this proof.  It is commented out in the TeX file.  We should vote on whether to add it back or not?}
%
%To begin, we point out that \Cref{lem:upper_bound} gives the upper bound required in \Cref{p:p_decay}.  Hence we focus on the lower bound only.
%
\Cref{p:p_in_front} controls $p$ at the point $c^*\tau + \sigma \sqrt\tau$ in a way consistent with~\eqref{mar1822}.  
%Since we have prescribed boundary conditions at $x = c^*\tau$, then we have verified~\eqref{mar1822} 
%at both endpoints of $[c^*\tau, c^*\tau + \sigma \sqrt\tau]$.  
On the other hand, by choosing $\overline \chi = -(1 + \|\chi\|_\infty)$ in \Cref{p:approximate_soln}, the combination of 
\Cref{p:approximate_soln,p:approximate_soln2} allows us to build a sub-solution $\xi^-$ to $p$.  Then, re-applying \Cref{p:approximate_soln2}, 
we see that $\xi^-$ satisfies the bounds in~\eqref{mar1822} except on a finite interval $[c^*\tau, c^*\tau + x_0]$, for some $x_0$.  By the 
comparison principle, we may then transfer these bounds to $p$ and use parabolic regularity to remove the condition on~$x_0$, finishing the proof 
of the claim. Thus, it remains to prove Propositions~\ref{p:approximate_soln} and \ref{p:p_in_front},
which is done in the rest of this paper.

\subsection{The proof of Proposition \ref{p:approximate_soln}}

Our strategy is the same as in \cite[Proposition~5.2]{HNRR12}, though the details are different, so we include a sketch of the proof 
for reader's convenience.   We  begin with the multi-scale expansion
\[
S(\tau, x, \theta)
= \frac{1}{\tau} \left(S^0(z) + \frac{S^1(z,\theta)}{\sqrt{\tau}} + \frac{S^2(z,\theta)}{\tau} + \frac{S^3(z,\theta)}{\tau^{3/2}}\right),
~~z = \farc{x - c^*\tau}{\sqrt{\tau}}.
\]
Plugging this into the left hand side of \eqref{e:approximate_soln}, we obtain the equation
\begin{equation}\label{apr110}
\begin{split}
&\frac{(1-\omega)}{\tau} \left[ -\frac{S^0}{\tau} - \frac{3S^1}{2\tau^{3/2}} - \frac{2S^2}{\tau^2} - \frac{5S^3}{2\tau^{5/2}}\right]
	+ \frac{(1-\omega)}{\tau} \left[ -c^* \frac{S^0_z}{\tau^{1/2}} - c^* \frac{S^1_z}{\tau} - c^*\frac{S^2_z}{\tau^{3/2}} - c^* \frac{S^3_z}{\tau^2}\right]\\
	& + \frac{(1-\omega)}{\tau} \left[ -\frac{z}{2} \frac{S^0_z}{\tau} - \frac{z}{2} \frac{S^1_z}{\tau^{3/2}} - \frac{z}{2} \frac{S^2_z}{\tau^2} - \frac{z}{2} \frac{S^3_z}{\tau^{5/2}}\right]
		+ \frac{D}{\tau} \left[ -\frac{S^0_{zz}}{\tau}  - \frac{S^1_{zz}}{\tau^{3/2}} - \frac{S^2_{zz}}{\tau^2} - \frac{S^3_{zz}}{\tau^{5/2}} \right]\\
	& + \frac{1}{\tau} \left[\frac{LS^1}{\tau^{1/2}} + \frac{LS^2}{\tau} + \frac{LS^3}{\tau^{3/2}}\right]
	 + \frac{\left( 2\lambda^* D + A \right)}{\tau} \left[ \frac{S^0_z}{\sqrt\tau} + \frac{S^1_z}{\tau} + \frac{S^2_z}{\tau^{3/2}} + \frac{S^3_z}{\tau^2}\right]
		%+ \frac{1}{\tau}\frac{2\alpha Q_\theta^*}{Q^*} \left[ - \frac{S^1_\theta}{\sqrt\tau} - \frac{S^2_\theta}{\tau} - \frac{S^3_\theta}{\tau^{3/2}}\right]\\
%	 &+ \frac{A(\theta)}{\tau} \left[ \frac{S^0_z}{\sqrt\tau} + \frac{S^1_z}{\tau} + \frac{S^2_z}{\tau^{3/2}} + \frac{S^3_z}{\tau^2}\right] 
	 = 0.
\end{split}
\end{equation}
Here, we have defined the operator 
\[
L = \Delta_{\theta} + \frac{2  }{Q^*}\nabla_\theta Q^*\cdot\nabla_\theta.
\]
Grouping the terms of order $\tau^{-3/2}$ in (\ref{apr110}), we obtain
\begin{equation}\label{apr106}
L S^1 =  \left( c^* - 2\lambda^* D - A\right) S^0_z.
\end{equation}
It is easy to verify that (\ref{apr106}) has a solution of the form
\begin{equation}\label{e:S_1}
	S^1 = \chi_0 S^0_z + \phi_1,~~~\chi_0=\chi+\overline\chi,
\end{equation}
where $\phi_1$ only depends on $z$. The terms of order $\tau^{-2}$ in (\ref{apr110}) give
\begin{equation}\label{sep404}
-S^0 - c^* S^1_z - \frac{z}{2}S^0_z - D S^0_{zz} + L S^2 + 2\lambda D S^1_z + A S^1_z = 0.
\end{equation}
Using expression (\ref{e:S_1}) for $S^1$,  multiplying (\ref{sep404}) by $(Q^*)^2$ and integrating in $\theta$, we obtain
%\begin{equation}\label{e:tau_terms}
%	- S^0 - \frac{z}{2} S^0_z - (D + c^*\chi_0 - 2\lambda^* D \chi_0 - A \chi_0) S^0_{zz} = -LS^2 + (c^* - 2 \lambda D + A) (\phi_1)_z.
%\end{equation}
%Multiplying (\ref{e:tau_terms})  by $(Q^*)^2$ and integrating in $\theta$, we obtain
\begin{equation}\label{sep406bis}
S^0 + \frac{z}{2} S^0_z + \overline{D} S^0_{zz} = 0
\end{equation}
with $\overline D$ as in (\ref{e:overline_Dbis}), so that
%\begin{equation}\label{e:overline_D}
%	\overline{D} \stackrel{\text{def}}{=} \left(\int_{\Theta} \left( D + c^* \chi - 2\lambda^* D \chi - A \chi\right) (Q^*)^2 d\theta\right) \left( \int_{\Theta} (Q^*)^2 d\theta \right)^{-1}.
%\end{equation}
%Notice that we used here the solvability condition for $Q$,~\eqref{rel3}.  
%Hence, we may conclude that
\begin{equation}\label{e:S_0}
	S^0(z) = z \exp\left\{ - \frac{z^2}{4\overline{D}}\right\}.
\end{equation}
%so long as $\overline{D} > 0$.  To see this, we simply differentiate~\eqref{rel4} in $\lambda$ to obtain
%\[\begin{split}
%	0 &= \frac{d}{d\lambda} \left( \int_\Theta Q^2 \left( c'(\lambda)\lambda + c(\lambda) - 2\lambda D - A\right) d\theta\right)\\
%	&= \int_\Theta \left[2QQ_\lambda \left(c'(\lambda)\lambda + c(\lambda) - 2\lambda D - A\right) + Q^2\left( c''(\lambda)\lambda + 2c'(\lambda) - 2 D - A\right) \right]d\theta. 
%\end{split}\]
%Evaluating this at $\lambda = \lambda^*$, using that $c'(\lambda^*) = 0$ and~\eqref{e:chi}, we obtain
%\[
%	0 = \int_\Theta \left[- 2 (Q^*)^2 \chi \left(c^* - 2\lambda^* D - A\right) + (Q^*)^2\left( c''(\lambda)\lambda - 2 D - A\right) \right]d\theta,
%\]
%which, recalling the definition of $\overline D$~\eqref{e:overline_D} and the relation \eqref{rel4}, is simply
%\[
%	c''(\lambda^*) \lambda^* \int_\Theta (Q^*)^2 d\theta = 2\overline{D} \int_\Theta (Q^*)^2 d\theta.
%\]
%Since $c''(\lambda^*) > 0$ by \Cref{propspec}, then $\overline{D} > 0$.
%
With this in hand, we return to \eqref{sep404} that we write   as
\begin{equation}\label{sep410}
LS^2 = (c^* - 2\lambda D + A)(S^1)_z + (D + c^* \chi_0 - 2\lambda^* D\chi_0 + A \chi_0 - \overline{D}) S^0_{zz}.
\end{equation}
One solution of (\ref{sep410}) is
\[
	S^2(z,\theta) = \chi_0(\theta) (\phi_1)_z(z) + \hat{S}^2(\theta) S^0_{zz}(z), % + \phi_2(z),
\]
where $\hat{S}^2(\theta)$ is any solution to
\begin{equation}\label{e:S_2}
	L \hat{S}^2 = D + c^*\chi_0 - 2\lambda^* D \chi_0 + A \chi_0 - \overline{D},
\end{equation}
with the Neumann boundary conditions. The definition of $\overline{D}$ ensures that solution of (\ref{e:S_2}) exists.

Continuing, we examine the terms of order $\tau^{-5/2}$ to obtain  
\begin{equation}\label{sep406}
-\frac{3}{2}S^1 + \overline\omega c^* S^0_z - c^* S^2_z - \frac{z}{2} S^1_z - D S^1_{zz} + 2\lambda D S^2_z + L S^3 + A S^2_z = 0.
\end{equation}
Here, we replaced $\omega$ by $\overline\omega / \tau$ at the expense of lower order terms which we may absorb 
into the~$O(\tau^{-3})$ term in \eqref{e:approximate_soln}.  
%Again we may rearrange this equation and use our formula for $S^1$, \eqref{e:S_1}, and the form for $S^2$, \eqref{e:S_2}, to obtain
%\[\begin{split}
%- \frac{3}{2} \phi_1 - &\frac{z}{2}(\phi_1)_z - (D +c^*\chi_0 - 2\lambda D\chi_0 - A\chi_0)(\phi_1)_{zz}\\
%&= \left(\frac{3}{2}\chi_0  - \overline\omega c^*\right)S^0_z + (c^*-2\lambda^* D - A) (\hat{S}^2 S^0_{zzz} + (\phi_2)_z) 
%+ \frac{z}{2} \chi_0 S^0_{zz} + D S^0_{zzz}  - LS^3.
%\end{split}\]
Multiplying by $(Q^*)^2$ and integrating over $\theta$ yields the solvability condition for $S^3$:
\begin{equation}\label{sep408}
-\frac{3}{2} \phi_1 - \frac{z}{2} \left(\phi_1\right)_z - \overline{D} (\phi_1)_{zz}
= \left(3\beta_1 - \overline\omega c^*\right) S^0_z + z \beta_1 S^0_{zz} + \beta_2 S^0_{zzz}.
\end{equation}
%Note that the compatibility condition \eqref{rel3} causes the $\phi_2$ term to vanish in (\ref{sep408}).  
Here, we have defined
\[ 
\beta_1:{=} \frac{1}{2}\frac{\int_\Theta \chi_0 (Q^*)^2 d\theta}{\int_\Theta (Q^*)^2 d\theta}, ~~~ 
\beta_2 :{=} \frac{\int_\Theta (c^*\hat{S}^2 - 2\lambda D\hat{S}^2 - A\hat{S}^2  + D) (Q^*)^2 d\theta}{\int_\Theta (Q^*)^2 d\theta}.
\]
We may now choose $\phi_1$ to be the unique solution to (\ref{sep408})
with $\phi_1(0) = 0$ and $(\phi_1)_z(0) = 0$.  Since~$S^0$ and its derivatives are bounded, there exists a 
constant $C$ such that $|\phi_1(z)| \leq C z^2$ for all~$|z| \leq \sigma$.  For the sake of clarity, we write $\phi_1 = z^2 \overline\phi$,
with a bounded function $\overline\phi$.  %As $\phi_2$ plays no role in the solution of $\phi_1$, we may set $\phi_2 \equiv 0$.

Finally, grouping the $\tau^{-3}$ terms together and setting them to zero, we get an equation for $S^3$.  It follows   
from  the elliptic regularity 
theory that, for $z \leq \sigma$, $S^3$ is uniformly bounded.  To summarize, we have found an approximate solution,  in the
sense that (\ref{e:approximate_soln}) holds, of the form
\begin{equation}\label{sep414}
S%&= \frac{1}{\tau} \left(S^0(z) + \frac{S^1(z,\theta)}{\sqrt{\tau}} + \frac{S^2(z,\theta)}{\tau} + \frac{S^3(z,\theta)}{\tau^{3/2}}\right)\\
= \frac{x - c^* \tau}{\tau^{3/2}} e^{-\frac{(x-c^*\tau)^2}{4\overline{D}\tau}} + \chi_0\frac{1 - \frac{z^2}{4\overline{D}\tau}}{\tau^{3/2}} e^{-\frac{(x-c^*\tau)^2}{4\overline{D}\tau}} + \frac{x-c^*\tau}{\tau^{5/2}}\overline\phi(z) + \frac{S^2}{\tau^2} + \frac{S^3}{\tau^{5/2}}.
\end{equation}
It also clearly satisfies the condition~\eqref{e:approximate_gaussian}.  %It is easy to check that \eqref{e:approximate_soln} 
%is satisfies since we have constructed $S^0$, $S^1$, and $S^2$ to make all terms of order less than $\tau^{-3}$ vanish.  
This concludes the proof.~$\Box$

\subsection{Understanding $p$ at $x-c^*\tau \sim O(\sqrt\tau)$: the proof of \Cref{p:p_in_front}}\label{s:p_in_front}

%Above, we have reduced the proof of the logarithmic delay to simply proving \Cref{p:p_in_front}.  
%We now reduce this to the following two lemmas.  The first of which lets us re-state the equation solved by $p$~\eqref{e:p_equation}.  This is the analogue of~\cite[Lemma~3.1]{HNRR12}
%
%\begin{lemma}\label{p:self_adjoint}
%The function $p$ satisfies
%\[
%	(1-\omega(\tau))p_\tau = \frac{1}{\nu} \left( D \nu p_x\right)_x + \frac{1}{\nu} ( \nu p_\theta)_\theta - \left( \frac{1}{\nu}(\beta \nu)_\theta + \frac{c^*}{|\Theta| \nu} \right) p_x
%\]
%where $\mu(\theta) > 0$ is a constant multiple of $(Q^*)^2$ and where $\beta(\theta)$ is a $C^1$ function, given explicitly below, satisfies Dirichlet boundary conditions on $\Theta$.
%\end{lemma}
%
%Notice that the advection in $x$ is constant when integrated under the new measure~$\nu$.  The advantage of the re-statement of the above problem into the so-called ``Norris form'' is that we may apply the heat kernel estimates of~\cite{Norris}.  This provides enough smoothness for $p$ such that we need not estimate $p$ at every point, but merely a large subset of points.  
%This is done in the following lemma, which is the analogue of~\cite[Proposition~4.1]{HNRR12}.
The lower bound in (\ref{sep416}) is a consequence of an integral bound. 
\begin{lemma}\label{p:p_in_front_weak}
There exists a time $T_0>0$ and constants $c_0$, $B$, and $N$, 
depending only on the initial data, such that for any $\tau > T_0$ 
there exists a 
set $I_\tau\subset[c^*\tau + N^{-1}\sqrt{\tau}, c^*t + N \sqrt{\tau}]$ 
with $|I_\tau| \geq B \sqrt{\tau}$ and with
\begin{equation}\label{sep422}
\frac{1}{c_0\tau}
\leq \int_{\Theta} p(\tau,x,\eta) d\eta.
		%\leq \frac{c_0}{\tau}.
\end{equation}
\end{lemma}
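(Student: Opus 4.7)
The plan is to show that the weighted first moment $I_1(\tau)=\int_{\mathcal{C}_\tau}\mu(\theta)(x-c^*\tau)p\,dxd\theta$, which is bounded below by a positive constant uniformly in $\tau$ by~\eqref{eq:conservation}, must concentrate its mass in a window of width $O(\sqrt\tau)$ about $x=c^*\tau$, and then to extract the set $I_\tau$ via a Chebyshev-type argument combined with the pointwise upper bound~\eqref{sep110}.

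Set $m(\tau,x):=\int_\Theta\mu(\theta)p(\tau,x,\theta)\,d\theta$ and $y:=x-c^*\tau$. The bound $p\leq C_0 y/\tau^{3/2}$ from Section~\ref{sec:upper_bound} gives $m\leq C_1 y/\tau^{3/2}$, so the near-boundary contribution to $I_1(\tau)$ satisfies
\[
\int_0^{\sqrt\tau/N} y\, m(\tau,c^*\tau+y)\,dy \leq \frac{C_1}{\tau^{3/2}}\int_0^{\sqrt\tau/N}y^2\,dy = \frac{C_1}{3N^3},
\]
which is at most $c_1/4$ once $N$ is chosen large enough.

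The main obstacle is the symmetric tail bound $\int_{N\sqrt\tau}^\infty y\,m(\tau,c^*\tau+y)\,dy\leq c_1/4$ for $N$ large. I expect to obtain this from a second-moment estimate $M_2(\tau):=\int_{\mathcal{C}_\tau}\mu(x-c^*\tau)^2 p\,dxd\theta\leq C\sqrt\tau$, consistent with the Gaussian-type spreading predicted by Proposition~\ref{p:approximate_soln}. Indeed, computing $dM_2/d\tau$ from the self-adjoint form~\eqref{aug3156}--\eqref{aug3164}: two integrations by parts and the Neumann boundary conditions in $\theta$ reduce $\int(x-c^*\tau)^2\mathcal{L}p\,dxd\theta$ to $2\int D\mu p\,dxd\theta + 2\int(x-c^*\tau)(2\lambda^*D+A)p\,dxd\theta$, which must be balanced against the transport contribution $-2c^*I_1$ from the moving frame. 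The delicate point is controlling the component of $2\lambda^*D+A$ orthogonal to the invariant measure $\mu$ (recall that $\int_\Theta(2\lambda^*D+A-c^*)\mu\,d\theta=0$ by~\eqref{rel3}); combined with the $L^2$ decay~\eqref{sep306} and the Harnack-type regularity inherited from~\Cref{p:harnack}, this yields the expected $O(\sqrt\tau)$ growth. Chebyshev's inequality then gives $\int_{N\sqrt\tau}^\infty y\,m\,dy \leq M_2/(N\sqrt\tau) \leq C/N$, again smaller than $c_1/4$ for $N$ sufficiently large.

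Once both the near-boundary and the far-field contributions are trimmed, the middle window $\Omega=[c^*\tau+\sqrt\tau/N,c^*\tau+N\sqrt\tau]$ carries at least $c_1/2$ of $I_1$. Define
\[
I_\tau := \left\{ x\in\Omega : \int_\Theta p(\tau,x,\eta)\,d\eta \geq \frac{1}{c_0\tau}\right\}.
\]
On $\Omega\setminus I_\tau$, one has $m\leq C_2/(c_0\tau)$, hence $\int_{\Omega\setminus I_\tau}(x-c^*\tau)m\,dx\leq C_2 N^2/(2c_0)$, which is at most $c_1/4$ for $c_0$ large. It follows that $\int_{I_\tau}(x-c^*\tau)m\,dx\geq c_1/4$. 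On the other hand, the pointwise bound gives $(x-c^*\tau)m\leq C_3 N^2/\sqrt\tau$ throughout $\Omega$, so $|I_\tau|\geq B\sqrt\tau$ with $B = c_1/(4C_3 N^2)$, which is the claim.
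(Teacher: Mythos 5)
Your overall skeleton matches the paper's: a uniform-in-$\tau$ lower bound on the first moment via \eqref{eq:conservation}, trimming of the near-boundary contribution with the pointwise bound \eqref{sep110}, trimming of the far tail, and then extraction of $I_\tau$ with the observation that $(x-c^*\tau)\int_\Theta p\,d\theta \leq CN^2/\sqrt{\tau}$ on the window; indeed your last step is essentially identical to the paper's. The difference is how the far tail is controlled, and this is where your argument has a genuine gap. The paper obtains the tail bound from the exponentially weighted quantity $V_\alpha$ with $\alpha = \tau^{-1/2}$ and its decay (Lemma \ref{lem:V_decay}, proved by the energy--dissipation and Nash-inequality machinery built around the weights $\zeta$ and $\eta_\alpha$), which via Cauchy--Schwarz gives the exponentially small tail \eqref{sep40}. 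You replace all of this by the claim $M_2(\tau)\le C\sqrt{\tau}$ plus Chebyshev, but you never prove that claim, and the sketch does not close.

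Concretely, differentiating $M_2$ in the self-adjoint form (with $\omega=0$ for simplicity) gives
\begin{equation*}
\frac{dM_2}{d\tau} \;=\; 2\int_{\mathcal{C}_\tau} D\,\mu\, p \,dx\,d\theta \;+\; 2\int_{\mathcal{C}_\tau} (x-c^*\tau)\bigl(2\lambda^* D + A - c^*\bigr)\mu\, p \,dx\,d\theta .
\end{equation*}
For the second term, the only structure available is that $2\lambda^* D + A - c^*$ has zero $\mu$-average by \eqref{rel3}; pointwise in $\theta$ it is of order one, so unless you prove a quantitative equidistribution of $p$ in $\theta$ (that the $\theta$-fluctuating part of $p$ is smaller, by a factor $\tau^{-1/2}$, than its $\theta$-average), this term is only $O(I_1(\tau))=O(1)$, which integrates to $M_2\lesssim \tau$, and then Chebyshev gives a tail of size $\sqrt{\tau}/N$ -- useless. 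The ingredients you invoke do not supply this cancellation: the $L^2$ decay \eqref{sep306} combined with the Gaussian bound \eqref{eq:weak_heat_kernel} gives at best $\sqrt{\tau}$ up to logarithmic losses, and even a single logarithmic factor is fatal here, since then the tail is of size $(\log\tau)^{1/4}/N$ and cannot be made small with $N$ fixed, as the lemma requires ($N$ and $B$ must be independent of $\tau$); the Harnack inequality gives comparability of values of $p$ at nearby points, not the decay of its $\theta$-fluctuation relative to its mean. A milder version of the same issue affects the first term: to integrate it to $O(\sqrt{\tau})$ you need mass decay $\int \mu p \lesssim \tau^{-1/2}$ without logs, which again does not follow from \eqref{sep110} and \eqref{eq:weak_heat_kernel}. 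So the second-moment route is plausible (it is consistent with the Gaussian profile of Proposition \ref{p:approximate_soln}), but its key estimate is exactly the hard part of the lemma; the paper's weights $\zeta$, $\eta_\alpha$ and Lemma \ref{lem:V_decay} are precisely the machinery that substitutes for the missing step.
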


Proposition \ref{p:p_in_front} follows from Lemma~\ref{p:p_in_front_weak} and a standard heat kernel bound. 
Indeed, let us assume that $\omega = 0$, as we may otherwise apply the time change
\[
d\tau' = \frac{d\tau}{1 - \omega(\tau)}.
\]
Let $\Gamma$ be the heat kernel for  (\ref{aug3156})-(\ref{aug3164})  
with the Dirichlet boundary condition at $x=c^*t$. That is, the solution of
\begin{eqnarray}\label{eq:self_adjoint_heat}
&&\mu \psi_\tau  = \left( D \mu \psi_x\right)_x +
\nabla_\theta\cdot( \mu \nabla_\theta \psi) 
- \left(\Delta_\theta \beta  +{c^*} \right) \psi_x,~~\tau>s,~x>c^*\tau,~\theta\in\Theta,\nonumber\\
&&\pdr{\psi}{\nu_\theta}=0,\hbox{ on $\partial\Theta$},\\
&&\psi(\tau,c^*\tau,\cdot)=0,\nonumber\\
&&\psi(s,\cdot)=\bar\psi,\nonumber
%(D \nu p_x)_x + (\nu p_\theta)_\theta - (\beta \nu)_\theta p_x - c^*\psi_x,
\end{eqnarray}
 can be written as
%In other words, $\Gamma$ solves~\eqref{eq:self_adjoint_heat} and satisfies
%\[
%	\lim_{\tau\to s} \Gamma(t,x,\theta,s,y,\eta) = \frac{1}{\nu} \delta_{(y,\eta)}(x,\theta).
%\]
%Then any solution $\psi$ to~\eqref{eq:self_adjoint_heat} can be written as
\begin{equation}\label{sep426}
\psi(\tau,x,\theta) = \int_{{\cal C}_s} \Gamma(\tau,x,\theta,s,y,\eta) \bar\psi(y,\eta) \mu(\eta) dyd\eta.
\end{equation}
%The heat kernel satisfies the standard estimates in a strip: there is a positive constant $C$ such that
%\begin{equation}\label{eq:gaussian}
%	\frac{1}{C(\tau-s)} e^{- C\frac{|x- y|^2 + |\theta - \eta|^2}{|\tau-s|}}
%		\leq \Gamma(\tau,x,\theta,s,y,\eta)
%		\leq \frac{C}{(\tau-s)} e^{- \frac{|x- y|^2 + |\theta - \eta|^2}{C|\tau-s|}}.
%\end{equation}
%
%On the other hand, define $\overline \Gamma$ to be the heat kernel for the same operator posed on the tilted cylinder
%\begin{equation*}
%	T(\xi, R,s)
%		\stackrel{\text{def}}{=} \bigcup_{r= s}^\infty D(\xi,R,r)
%		\stackrel{\rm def}{=} \bigcup_{r = s}^\infty \left\{
%		(r,x,\theta) : |x - \xi - c^*r|^2 + |\theta - \eta|^2 \leq R^2
%	\right\},
%\end{equation*}
%with Dirichlet boundary conditions for $|x -\xi - c^* \tau|^2 + |\theta - \eta|^2 = R^2$.  It is clear that $\overline \Gamma \leq \Gamma$ by the maximum principle.  A classical result regarding the relationship between $\Gamma$ and $\overline \Gamma$, originally due to~\cite{FabesStroock} in a different setting and worked out explicitly in a setting similar to the present one in~\cite[Lemma~4.2]{HNRR12}, is the following.
As in~\cite{HNRR12}, one can show the following, starting with the standard heat kernel bound
in a cylinder. Set $\Phi(s)=s$ for $s\in[0,1]$ and $\Phi(s)=\sqrt{s}$ for $s>1$, then for 
%\begin{lem}\label{p:heat_kernel_bound}
all $\delta > 0$, there exists a constant $K$ such that
\begin{equation}\label{sep420}
\Gamma(\tau,x,\theta,s,y,\eta)
\geq \frac{1}{K \Phi(\tau-s)} \exp \left\{ - K \frac{|x - y|^2 + |\theta - \eta|^2}{\Phi(\tau - s)}\right\}
\end{equation}
whenever $R>0$, $\tau\in (s, s+R^2]$, and $x,y\in (c^*\tau + \xi - \delta R, c^*\tau + \xi + \delta R)$. A straightforward computation using
(\ref{sep426}) going from the time $s=\tau/2$ to $\tau$
shows that the integral bound (\ref{sep422}),
combined with the pointwise lower bound~(\ref{sep420}) on the heat kernel, lead to a pointwise lower bound on $p$ in 
Proposition~\ref{p:p_in_front}.~$\Box$

\subsection{Proof of \Cref{p:p_in_front_weak} }

\subsubsection*{An exponentially weighted estimate}

As in~\cite{HNRR12}, one may show that 
for all $\alpha > 0$, there exists a function $\eta_\alpha$ that satisfies 
\begin{eqnarray}\label{sep430}
&&\mu \partial_\tau \eta_\alpha = - \mathcal{L}^* \left( \mu \eta_\alpha \right) 
+ \aleph(\alpha) \mu \eta_\alpha,~
\hbox{ on $\mathcal{C}_\tau $,} 
%(t,x,\theta) \in \R^+ \times(c^*t - r \log(1 + t/T),\infty)\times \Theta,
%&\text{ for all } (t,\theta) \in \R^+ \times \Theta,
\\
&&\pdr{\eta_\alpha}{\nu_\theta}=0\hbox{ on $\partial\Theta$,}
%\left( \eta_\alpha \right)_\theta(\tau,x,\underline{\theta}) = \left( \eta_\alpha \right)_\theta(\tau,x,\overline{\theta}) = 0,
\nonumber \\
%&\text{ for all } (t,x)\in \R^+\times [c^*t - r\log(1+t/T)\\
&&	\eta_\alpha(\tau,c^*\tau,\cdot) = 0.\nonumber
\end{eqnarray}
as well as the exponential bounds
%and such that there exists a constant $C>0$ such that all $x \geq c^* \tau$, 
\begin{equation}\label{eq:exponential}
 \frac{e^{\alpha (x-c^*\tau)} - e^{-\alpha (x-c^*\tau)}}{C\alpha}
		\leq \eta_\alpha(t,x,\theta)
		\leq C \frac{e^{\alpha (x-c^*\tau)} - e^{-\alpha (x-c^*\tau)}}{\alpha}.
\end{equation}
The  eigenvalue $\aleph(\alpha)$ in (\ref{sep430}) behaves as
\begin{equation}\label{sep432}
\aleph(\alpha) = \aleph_0 \alpha^2 + O(\alpha^3),
\end{equation}
as $\alpha$ tends to zero, with some $\aleph_0>0$.  
Moreover, we have  
\begin{equation}\label{sep434}
\begin{split}
	&|\partial_\tau \eta_\alpha| \leq C
		~~\qquad \text{ for all } x \in [c^*\tau, c^*\tau + \alpha^{-1}], ~\text{ and }\\
	&|\partial_\tau \eta_\alpha| \leq C \alpha \eta_\alpha
		~~~\text{ for all } x \geq c^*\tau + \alpha^{-1}.
\end{split}
\end{equation}
%\begin{enumerate}
%\item There exists a constant $C>0$ such that all $x \geq c^* \tau$, 
%\begin{equation*}
%C^{-1} \frac{e^{\alpha (x-c^*\tau)} - e^{-\alpha (x-c^*\tau)}}{\alpha} \leq \eta_\alpha(t,x,\theta) \leq C \frac{e^{\alpha (x-c^*\tau)} - e^{-\alpha (x-c^*\tau)}}{\alpha}.
%\end{equation*}
%\item There exists a constant $C\in \R^+$ such that $\vert \partial_\tau \eta_\alpha \vert  \leq C \alpha \eta_\alpha$.
%\item $\partial_\tau \eta_\alpha \leq 0$.
%\item For some explicit constant $C$, we have $\mu\left(\alpha\right) \underset{\alpha \to 0}{\sim} \ds C\alpha^2$.
%\end{enumerate}
%\end{lem}
%Since the proof of~\Cref{lem:etaalpha} is a straightforward generalization of the~\cite[Lemmas~3.3~and~3.4]{HNRR12}, we omit the proof.
With this in hand, we define  
\begin{equation}\label{eq:integral_quantitiesbis}
%\begin{split}
%&I(\tau) = (1-\omega(\tau))\int_{\cC_\tau} \mu(\theta) f(\tau,x,\theta) p(\tau, x, \theta) dxd\theta,\\
%&I_\alpha(\tau) = (1 - \omega(\tau)) \int_{\mathcal{C}_\tau} \mu(\theta) \eta_\alpha(\tau,x,\theta) p(\tau,x,\theta) dxd\theta,\\
%&
V_\alpha(\tau) = \left( 1 - \omega(\tau) \right) \int_{\mathcal{C}_\tau} \mu(\theta) \, \eta_{2\alpha}(\tau,x,\theta) \, p(\tau,x,\theta)
 \, q(\tau,x,\theta) \, dxd\theta. % \\
%&D_\alpha(\tau) = \int_{\mathcal{C}_\tau} \mu(\theta)  \eta_{2\alpha} (\tau,x,\theta)
%\zeta(\tau,x,\theta) \left( D(\theta) \vert q_x(\tau,x,\theta) \vert^2 + \vert \nabla_\theta q(\tau,x,\theta) \vert^2 \right) 
%dxd\theta,
%\end{split}
\end{equation}
Here, we write
\begin{equation}\label{sep2604}
p = q \zeta,
\end{equation}
and $\zeta$ is as in~\Cref{lem:weightzeta}.
\Cref{p:p_in_front_weak} is a consequence of the following estimate. % on~$V_\alpha$. 
%The first estimate is the decay of $V_\alpha$.  
%This is an analogue of the work appearing in~\cite[Propositions~4.1~and~5.1]{HNRR12} and we delay its proof momentarily.
\begin{lem}\label{lem:V_decay}
There is a constant $C_0$ depending on $p_0$ such that
\begin{equation}\label{sep436}
V_{\tau^{-1/2}}(\tau) \leq C_0 \tau^{-3/2}.
\end{equation}
\end{lem}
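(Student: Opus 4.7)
My plan is to follow the strategy of [HNRR12, Lemma 5.5], mirroring the $L^2$ decay proof in \Cref{sec:upper_bound} but with the exponential weight $\eta_{2\alpha}$ in place of the polynomial one $\zeta$. I would first differentiate $V_\alpha(\tau)$ (for $\alpha$ temporarily fixed) in time, using the evolution equations \eqref{aug3156} for $p$, \eqref{sep106} for $\zeta$, and \eqref{sep430} for $\eta_{2\alpha}$. Writing $p = q\zeta$ and integrating by parts, the adjoint identity $\mathcal{L}^*(\mu \eta_{2\alpha}) = \aleph(2\alpha)\, \mu \eta_{2\alpha} - \mu\, \partial_\tau \eta_{2\alpha}$ collapses the bulk contributions into an identity of the form
\begin{equation*}
V_\alpha'(\tau) + \aleph(2\alpha)\, V_\alpha(\tau) + \mathcal{D}(\tau,\alpha) = \mathcal{E}(\tau,\alpha),
\end{equation*}
where $\mathcal{D}(\tau,\alpha)$ is a non-negative weighted Dirichlet form in $\nabla q$, and $\mathcal{E}$ gathers error terms coming from $\omega$ and $\omega'$, from the cross terms $D\zeta_x q_x$ and $\nabla_\theta \zeta \cdot \nabla_\theta q$ produced by the factorization $p = q\zeta$, and from the boundary at $x = c^*\tau$.

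The next step is to bound $\mathcal{E}(\tau,\alpha)$. The decay estimates $|\omega'\tau^2|, (\tau+T)\omega \leq C$ from \eqref{sep702}, combined with the sharp pointwise bounds on $\eta_{2\alpha}$ and $\partial_\tau \eta_{2\alpha}$ in \eqref{eq:exponential}--\eqref{sep434}, the estimates on $\zeta$ from \Cref{lem:weightzeta}, and the already-established $L^2$ decay \eqref{sep120}, should show that $\mathcal{E}$ is absorbable on the scale $\alpha \sim \tau^{-1/2}$.

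The key analytic ingredient is then a Nash-type interpolation on the moving half-cylinder $\mathcal{C}_\tau$, tailored to the weight $\eta_{2\alpha}$ and the Dirichlet condition at $x = c^*\tau$, yielding a bound of the form
\begin{equation*}
\mathcal{D}(\tau,\alpha) \geq c\, N^{-2/3}\, V_\alpha(\tau)^{5/3},
\end{equation*}
with $N$ the conserved $L^1$-type quantity from \eqref{eq:conservation}. Dropping the non-negative $\aleph(2\alpha) V_\alpha$ contribution, the differential identity reduces to the scalar inequality
\[
V_\alpha'(\tau) + c\, N^{-2/3}\, V_\alpha(\tau)^{5/3} \leq |\mathcal{E}(\tau,\alpha)|,
\]
which integrates to $V_\alpha(\tau) \leq C_0\, \tau^{-3/2}$ uniformly in $\alpha$ for $\tau$ large. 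Setting $\alpha = \tau^{-1/2}$ finishes the proof.

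The main obstacle will be establishing the weighted Nash inequality in Step 3. The weight $\eta_{2\alpha}$ has qualitatively different behavior in the region $\alpha(x-c^*\tau) \lesssim 1$, where it is approximately $2(x-c^*\tau)$, and in the region $\alpha(x-c^*\tau) \gg 1$, where it is genuinely exponential; the exponent $5/3$ must emerge from combining the half-cylinder form of \eqref{aug3152} with an exponential trade-off between these two regimes, and the prefactor must be controlled uniformly in $\alpha$. A secondary technical issue is the justification of the integration by parts near the moving boundary, for which the joint vanishing of $\zeta$ and $\eta_{2\alpha}$ at $x = c^*\tau$ (\Cref{lem:weightzeta} and \eqref{eq:exponential}) is essential to kill the boundary contributions.
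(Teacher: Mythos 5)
Your overall skeleton (differentiate $V_\alpha$, use the adjoint equation for $\eta_{2\alpha}$ and the factorization $p=q\zeta$ to produce a dissipation term, close with a Nash-type bound and an ODE argument) matches the paper, but two of your steps have genuine problems. First, the sign of the eigenvalue term is wrong: from \eqref{sep430}, the contribution of $\partial_\tau\eta_{2\alpha}$ produces $+\aleph(2\alpha)V_\alpha$ on the \emph{right-hand side} of the equation for $V_\alpha'$, i.e.\ it is a growth term (indeed, for $\omega\equiv 0$ the simpler quantity $I_\alpha$ satisfies $I_\alpha'=\aleph(\alpha)I_\alpha$ exactly, so exponentially weighted functionals of $p$ grow like $e^{\aleph(\alpha)\tau}$). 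It therefore cannot be ``dropped,'' and your claimed conclusion ``$V_\alpha(\tau)\le C_0\tau^{-3/2}$ uniformly in $\alpha$ for $\tau$ large'' is false: for fixed $\alpha$ and $\tau\to\infty$, $V_\alpha$ grows like $e^{c\alpha^2\tau}$. The argument only works on the range $\tau\le\alpha^{-2}$, where $\aleph(2\alpha)\tau=O(1)$ by \eqref{sep432}; the paper restricts to this regime throughout and absorbs the factor $e^{\aleph(2\alpha)\tau}$ (together with the $\omega$- and $C\alpha\log(\tau+1)$-corrections) into the substitution $Z=(\tau+1)^{3/2}V_\alpha e^{-\Phi}$ before running the ODE comparison. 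This restriction, not uniformity in $\alpha$, is what makes the final choice $\alpha=\tau^{-1/2}$ legitimate.

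Second, the quantity governing the Nash step is not the unweighted conserved mass $N$ from \eqref{eq:conservation}. The dissipation--$V_\alpha$ inequality \eqref{eq:D_alpha} comes from applying the \emph{unweighted} Nash inequality of \Cref{p:Nash} on the extended cylinder $\R^3\times\Theta$ to $\varphi(\tau,z,\theta)=e^{\alpha z_3}q(\tau,c^*\tau+|z|,\theta)$ (the 3D Jacobian $|z|^2$ reproduces the weight $\zeta\,\eta_\alpha$ upon radial integration), and the $L^1$ norm that enters is the exponentially weighted mass $\hat I_\alpha\sim I_\alpha=(1-\omega)\int\mu\,\eta_\alpha p$. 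To conclude $D_\alpha\ge C_0^{-1}V_\alpha^{5/3}$ one needs both $C_0^{-1}\le \hat I_\alpha\le C_0$ and the a priori bound $V_\alpha\le C_0$ (the latter to control the denominator in \eqref{aug3150}), i.e.\ \eqref{eq:strong_I_alpha}. Establishing these is the heart of the proof: a Gr\"onwall argument for $I_\alpha$ built on the adjoint eigenvalue equation, the Gaussian bound \eqref{eq:weak_heat_kernel}, the decay hypotheses \eqref{sep702} on $\omega$, and the choice of $T\ge T_0$ large — which is precisely why \Cref{p:p_decay} carries the hypothesis $T\ge T_0$. Your proposal defers exactly this point (``the weighted Nash inequality'') as the main obstacle without a mechanism to resolve it, and the replacement of $\hat I_\alpha$ by $N$ is not valid, since the unweighted conservation law does not control the exponentially weighted mass. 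Without the near-conservation of $I_\alpha$ and the upper bound on $V_\alpha$, the inequality $\mathcal{D}\gtrsim V_\alpha^{5/3}$ — and hence the whole closure of the ODE argument — is unsupported.
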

We first show how to conclude the proof of \Cref{p:p_in_front_weak}  from \Cref{lem:V_decay}.  Note that \eqref{sep436} implies
\begin{equation}\label{sep438}
\left( \int_0^\infty\int_\Theta \frac{e^{2x/\sqrt{\tau}} - e^{-2x/\sqrt{\tau}}}{x} p(\tau,c^*\tau + x, \theta)^2 dxd\theta\right)^{1/2}
\leq \frac{C_0}{\tau},
\end{equation}
Fix $N>0$ to be determined later, then (\ref{sep438}) gives, in particular:
\begin{eqnarray}\label{sep40}
&&\int_{N\sqrt{\tau}} ^\infty\int_\Theta x p(\tau,c^*\tau + x, \theta) dxd\theta
= \int_{N\sqrt{\tau}}^\infty \int_\Theta\frac{e^{x/\sqrt{\tau}}}{\sqrt x} p(\tau,c^*\tau + x, \theta) e^{-x/\sqrt{\tau}} x^{3/2} dxd\theta
\nonumber\\
&&\leq \left(\int_{N\sqrt{\tau}}^\infty\int_\Theta \frac{e^{2x/\sqrt{\tau}}}{x} p(\tau,c^*\tau + x, \theta)^2 dxd\theta\right)^{1/2}
\left(\int_{N\sqrt{\tau}}^\infty\int_\Theta e^{-2x/\sqrt{\tau}} x^3dxd\theta\right)^{1/2}\nonumber\\
&&\leq \frac{C}{\tau} \left(\int_{N\sqrt{t}}^\infty\int_\Theta e^{-2x/\sqrt{\tau}} x^3dxd\theta\right)^{1/2}
\leq C_0 N^3 e^{-N/2}.
\end{eqnarray}
On the other hand, we also have 
\[
\int_0^{\sqrt{\tau}/N}\int_\Theta x p(\tau,c^*\tau + x, \theta) dxd\theta
\leq C_0 N^{-3}.
\]
Hence, choosing $N$ sufficiently large, depending only on the initial data of $p$ and not on time, we have 
\begin{equation}\label{sep602}
\int_{\sqrt{\tau}/N}^{N\sqrt{\tau}}\int_\Theta x p(\tau,c^*\tau + x, \theta) dxd\theta \geq C_0.
\end{equation}
Let us set
\[
I_\tau:{=} \left\{x \in [c^*\tau + \sqrt\tau/N,c^*\tau + N\sqrt{\tau}] : \int_\Theta p(\tau,x, \eta)d\eta \geq \frac{C_0}{4N^2\tau}\right\}.
	% ~\text{ and}\\
	%&J_\tau \stackrel{\rm def}{=} I_\tau^c \cap [c^*\tau + \sqrt{\tau}/N, c^*\tau + N\sqrt{\tau}].
\]
Then, (\ref{sep602}) implies
%\[
%C_0
%\leq \int_{I_\tau}\int_\Theta (x-c^*\tau) p(\tau, x, \theta) dxd\theta
%+ \int_{J_\tau}\int_\Theta (x-c^*\tau) p(\tau, x, \theta) dxd\theta
%\leq \int_{I_\tau}\int_\Theta (x-c^*\tau) p(\tau, x, \theta) dxd\theta + \frac{C_0}{4}.
%\]
%Finally, by applying \Cref{lem:upper_bound} once more and using the above inequality, we finish by noting that
\[
\frac{3C_0}{4}
\leq \int_{I_\tau} \int_\Theta (x-c^*\tau) p(\tau,x,\theta)dxd\theta
\leq \int_{I_\tau} \int_\Theta \frac{C_0(x-c^*\tau)^2}{\tau^{3/2}}dxd\theta
\leq |I_\tau|\frac{C_0N^2}{\tau^{1/2}},
\]
and the proof of \Cref{p:p_in_front_weak} is complete.~$\Box$

\subsection{The proof of \Cref{lem:V_decay}}

%{\color{red} This whole section assumes that we can prove the following:
%\[
%	|\partial_\tau \eta_\alpha(x+c\tau)| \leq C x^{-1} \eta_\alpha,
%	~~~~
%	|\partial_\alpha \eta_\alpha(x+c\tau)| \leq C \alpha^{-1} \eta_\alpha \text{ on } x\in[0,\alpha^{-1}].
%\]
%}

Throughout this section we use the assumption that $\tau \leq \alpha^{-2}$.  
The proof relies on two observations. First, we have the following energy-dissipation inequality for $V_\alpha$:
\begin{equation}\label{eq:V_alpha}
V_\alpha'(\tau) \leq \ds \left(  \aleph(2\alpha) - \frac{\omega'(\tau)}{1- \omega(\tau)} + C\alpha \omega(\tau) \right)  V_\alpha (\tau)
- 2 D_\alpha(\tau) + \frac{C_0}{(\tau+1)^{5/2}},
\end{equation}
with the dissipation
\begin{equation}\label{sep2602}
D_\alpha=\int_{\mathcal{C}_\tau} \mu(\theta) \eta_{2\alpha}(\tau,x,\theta) \zeta(\tau,x,\theta) \left( D(\theta) \vert q_x(\tau,x,\theta) \vert^2 
+ \vert q_\theta(\tau,x,\theta) \vert^2 \right) dxd\theta.
\end{equation}
Recall that the function $\zeta$ is defined in Lemma~\ref{lem:weightzeta},  and $q$ is as in (\ref{sep2604}). 
Since this computation is quite involved, we delay it for the moment.

The second observation is that the dissipation $D_\alpha$ may be related  to $V_\alpha$ by the inequality
\begin{equation}\label{eq:D_alpha}
	D_\alpha
		\geq \frac{1}{C_0} V_\alpha^{5/3}%\frac{1}{C_0} \frac{V_\alpha^{5/3}}{1 + V_\alpha^\frac{5d}{3d+9}},
\end{equation}
where $C_0$ is a constant depending only on $p_0$ and $\tau \in [0,T]$.  We also delay the proof of~\eqref{eq:D_alpha}.

%With these inequalities in hand, we show how to conclude \Cref{lem:V_decay}.  
The combination of~\eqref{eq:V_alpha} and~\eqref{eq:D_alpha} yields the differential inequality
\begin{equation}\label{sep2610}
V_\alpha'
\leq %\left(\aleph(2\alpha) - \frac{\omega'}{1-\omega} + \frac{C\alpha}{\tau + 1}\right) V_\alpha - \frac{1}{C_0}\frac{V_\alpha^{5/3}}{1 + V_\alpha^\frac{5d}{3d+9}} + \frac{C_0}{(\tau+1)^{5/2}}
\left(\aleph(2\alpha) - \frac{\omega'}{1-\omega} + \frac{C\alpha}{\tau + 1}\right) V_\alpha - \frac{1}{C_0}V_\alpha^{5/3} + \frac{C_0}{(\tau+1)^{5/2}}.
\end{equation}
Let us define
\[
Z(\tau) = (\tau+1)^{3/2} V_\alpha \exp(-\Phi(\tau)),~~\Phi(\tau)=\aleph(2\alpha)\tau +\log(1-\omega(\tau)) + C\alpha \log(\tau+1) .
\]
Note that,  as $\tau\le\alpha^{-2}$, we know, due to the asymptotics  (\ref{sep432}) for $\aleph(2\alpha)$,  that 
\begin{equation}\label{sep2606}
\hbox{$|\Phi(s)|\le C$ for all~$0\le s \le\tau$, }
\end{equation}
with a constant $C>0$ that is independent of~$\alpha>0$ sufficiently small. 
Thus, (\ref{sep436}) would follow if we show that
that $Z$ is uniformly bounded above. However, it follows from (\ref{sep2610}) and (\ref{sep2606}) that~$Z$ satisfies
\[
Z' %\leq C \frac{Z}{\tau+1} + \frac{C_0}{\tau +1} - \frac{1}{C_0 (\tau+1)} \frac{Z^{5/3}}{1 + Z^\frac{5d}{3d+9}(\tau+1)^\frac{-5d}{2d + 6}}.
\leq C \frac{Z}{\tau+1} + \frac{C_0}{\tau +1} - \frac{1}{C_0 (\tau+1)} Z^{5/3}.
\]
This implies  
\[
	Z^{5/3} \leq \max\left\{
		C \left(Z + C_0\right), Z(0)^{5/3}\right\}.
\]
Hence, $Z$ is bounded uniformly above.
Thus, to finish the proof of \Cref{lem:V_decay}, it only remains to show~\eqref{eq:V_alpha} and~\eqref{eq:D_alpha}.

\subsubsection*{Proof of the differential inequality \eqref{eq:V_alpha} for $V_\alpha$ }

Differentiating $V_\alpha$, we obtain
\begin{equation}\label{sep2702}
V_\alpha' = \ds - \frac{\omega'}{1- \omega} V_\alpha + 
\left( 1 - \omega \right) \int_{\mathcal{C}_\tau} 
\mu  \left[ \left( \partial_\tau \eta_{2\alpha} \right) p q + \eta_{2\alpha} p_\tau q + \eta_{2\alpha} p q_\tau  \right] dxd\theta.
\end{equation}
Let us re-write the integral in (\ref{sep2702}). 
By the definition of $\eta_{2\alpha}$, we have
\begin{equation*}
\begin{split}
\left( 1 - \omega \right) \int_{\mathcal{C}_\tau} \mu \left( \partial_\tau \eta_{2\alpha} \right) p q \, dxd\theta
&=\left( 1 - \omega  \right) \int_{\mathcal{C}_\tau} \left[ - \mathcal{L}^* \left( \mu \eta_{2\alpha} \right) 
+ \aleph(2\alpha) \mu \eta_\alpha \right] p q \, dxd\theta,\\
&= \ds - \left( 1 - \omega \right) \int_{\mathcal{C}_\tau} \mu \eta_{2\alpha} \mathcal{L}(p q) \, dxd\theta + 
\aleph(2\alpha) V_\alpha.
\end{split}
\end{equation*}
%where the last line used the boundary conditions satisfied by $\eta_{2\alpha}$ and $p$. 
Using equation~\eqref{e:p_equation} for $p$, we deduce 
\begin{equation}\label{sep2710}
V_\alpha' = \ds \left\lbrace \aleph(2\alpha) - \frac{\omega'}{1- \omega} \right\rbrace V_\alpha 
+ \int_{\mathcal{C}_\tau} \mu \eta_{2\alpha} \left[ -  \left( 1 - \omega(\tau) \right) \mathcal{L}(p q) 
+ \mathcal{L}(p) q + (1- \omega) pq_\tau  \right] dxd\theta.
\end{equation}
The last integral requires a bit of work. 
%We now obtain the two following identities from straightforward computations using only the definition of $\cL$:
Note that
\[
 \mathcal{L}( p q) = p \mathcal{L}(q) + q \mathcal{L}(p) + 2 D  p_x q_x + 2 p_\theta q_\theta,
 \]
and
\begin{eqnarray*}	
&&(1-\omega) q_\tau = \ds \mathcal{L}(q) + \frac{q}{\zeta} \left( \mathcal{L}\left( \zeta \right) - \left( 1 - \omega \right) \zeta_\tau \right) + 2 D  \frac{\zeta_x}{\zeta} q_x + 2 \frac{\zeta_\theta}{\zeta} q_\theta, \\
&&~~~~~~~~~~~~~~~~
= \ds \mathcal{L}(q) + \frac{q}{\zeta} \left( \mathcal{L}\left( \zeta \right) - \zeta_\tau \right) + \omega   \frac{\zeta_\tau}{\zeta}q+ 2 D \frac{\zeta_x}{\zeta} q_x + 2 \frac{\zeta_\theta}{\zeta} q_\theta.
\end{eqnarray*}
Thus, we may re-write (\ref{sep2710}) as
\[
\begin{split}
V_\alpha'  &= \ds \left(  \aleph(2\alpha) - \frac{\omega' }{1- \omega} \right) V_\alpha 
+ \omega  \ds \int_{\mathcal{C}_\tau} \mu \eta_{2\alpha} \mathcal{L}(p q)  dxd\theta 
+ \omega  \int_{\mathcal{C}_\tau} \mu \eta_{2\alpha} p q \frac{\zeta_\tau}{\zeta} dxd\theta\\
&\qquad - 2 \ds \int_{\mathcal{C}_\tau} \mu \eta_{2\alpha} ( D p_x q_x + p_\theta q_\theta ) dxd\theta  
+ 2 \int_{\mathcal{C}_\tau} \mu \eta_{2\alpha} p \left( D \frac{\zeta_x}{\zeta} q_x + \frac{\zeta_\theta}{\zeta} q_\theta \right) dxd\theta. 
\end{split}
\]
The last two terms in the right side can be combined as
\[
\begin{split}
&- 2 \ds \int_{\mathcal{C}_\tau} \mu \eta_{2\alpha} ( D  p_x q_x + \alpha p_\theta q_\theta ) \, dxd\theta + 
2 \ds \int_{\mathcal{C}_\tau} \mu \eta_{2\alpha} \left( D  \frac{ \zeta_x}{\zeta} q_x 
+   \frac{\zeta_\theta}{\zeta} q_\theta \right) p \, dxd\theta\\
&\quad = - 2 \ds \int_{\mathcal{C}_\tau} \mu \eta_{2\alpha} \left( D  
\left( p_x - \frac{ \zeta_x}{\zeta} p \right) q_x + \left( p_\theta - \frac{ \zeta_\theta}{\zeta} p \right) q_\theta \right) dxd\theta\\
&\quad= - 2 \ds \int_{\mathcal{C}_\tau} \mu \eta_{2\alpha} \zeta \left( D  \vert q_x \vert^2 + \vert q_\theta \vert^2 \right) dxd\theta
		 = - 2 D_\alpha,
\end{split}
\]
hence
\begin{equation}\label{sep902}
V_\alpha' = \ds \left(  \aleph(2\alpha) - \frac{\omega' }{1- \omega } \right) V_\alpha  - 
2 D_\alpha  + \omega  \ds \int_{\mathcal{C}_\tau} \mu \eta_{2\alpha} \left( \mathcal{L}(p q) + p \frac{\zeta_\tau}{\zeta} q \right) dxd\theta .
\end{equation}
The estimate (\ref{eq:V_alpha}) will be complete after estimating the last term in the right side.  We  write
\begin{equation}\label{sep2501}
%\begin{split}
\int_{\mathcal{C}_\tau} \mu \eta_{2\alpha} \left( \mathcal{L}(p q) + p \frac{\zeta_\tau}{\zeta} q \right) dxd\theta
%&
= \int_{\mathcal{C}_\tau} \left( p q \mathcal{L}^*(\mu \eta_{2\alpha} ) + \mu \eta_{2\alpha} \frac{\zeta_\tau}{\zeta}  p q \right) dxd\theta
% \\
%			&= \int_{\mathcal{C}_\tau} \nu \eta_{2\alpha} \left( \frac{\partial_\tau \eta_{2\alpha}}{\eta_{2\alpha}} + \frac{\zeta_\tau}{\zeta} \right) p q \, dxd\theta \\
%			&= \int_{\mathcal{C}_\tau} \nu \eta_{2\alpha} \frac{\partial_\tau \eta_{2\alpha}}{\eta_{2\alpha}} p q \, dxd\theta + \int_{\mathcal{C}_\tau} \nu \eta_{2\alpha} \frac{\zeta_\tau}{\zeta} p q \, dxd\theta.
%&
\leq \int_{\mathcal{C}_\tau} \mu [\partial_\tau \eta_{2\alpha} ]p q \, dxd\theta,
% + \int_{\mathcal{C}_\tau} \nu \eta_{2\alpha} \frac{\zeta_\tau}{\zeta} p q \, dxd\theta.
%	\end{split}
\end{equation}
as $\zeta_\tau \leq 0$. We  use~\eqref{sep434} to obtain%\begin{equation*}
%\left\vert \int_{\mathcal{C}_\tau} \nu \eta_{2\alpha} \frac{\partial_\tau \eta_{2\alpha}}{\eta_{2\alpha}} p q  dxd\theta \right\vert \leq  2\alpha C V_\alpha(\tau).
%\end{equation*}
\begin{equation*}
\begin{split}
\left\vert \int_{\mathcal{C}_\tau} \mu \partial_\tau \eta_{2\alpha} p q  dxd\theta \right\vert
&\leq  \left\vert \int_{c^*\tau}^{c^*\tau + \alpha^{-1}}\int_\Theta \mu \partial_\tau \eta_{2\alpha} p q  dxd\theta \right\vert
			+ \left\vert \int_{c^*\tau + \alpha^{-1}}^\infty \int_\Theta \mu \partial_\tau \eta_{2\alpha} p q  dxd\theta \right\vert \\
		&\leq \int_{c^*\tau}^{c^*\tau + \alpha^{-1}}\int_\Theta \mu p q  dxd\theta
			+ C \alpha \int_{c^*\tau + \alpha^{-1}}^\infty\int_\Theta \mu \eta_{2\alpha} p q  dxd\theta.
\end{split}
\end{equation*}
The second term above is $C\alpha V_\alpha$, as desired.  
For the first term, we  apply the upper bound~\eqref{sep110} for~$p$  and the asymptotics for $\zeta$ in~\Cref{lem:weightzeta} to obtain
\[
\int_{c^*\tau}^{c^*\tau + \alpha^{-1}}\int_\Theta \mu p q  dxd\theta
		\leq \frac{C_0}{(\tau+1)^{3/2}}\int_{c^*\tau}^{c^*\tau + \alpha^{-1}}\int_\Theta p  dxd\theta.
\]
Integrating~\eqref{aug3156}, we see that $\int_{\mathcal{C}_\tau} \mu p dx d\theta$ is non-increasing in time.
%Arguing as in~\cite[Lemma~5.4]{HNRR12}, we may bound $p$ by the solution to~\eqref{e:p_equation}
%in the whole cylinder $\Rm\times \Theta$.  Thus, the heat kernel bounds of, e.g.~\cite{Norris}, imply that
%\begin{equation}\label{eq:weak_heat_kernel}
%	p(\tau, x+c^*\tau,\cdot)
%		\leq \frac{C_0 e^{- \frac{x^2}{C(\tau+1)}}}{\sqrt{\tau+1}},
%\end{equation}
%where $C_0$ is a constant depending on $p_0$.  
%  We will prove a stronger claim that
%\begin{equation}\label{eq:I_alpha}
%	\int_{\mathcal{C}_\tau} e^{\alpha x} p dx d\theta
%		\leq C_0.
%\end{equation}
%We momentarily post-pone this inequality in order to finish the proof of~\eqref{eq:V_alpha}.  
Hence, we obtain that
\[
\omega\int_{\mathcal{C}_\tau} \mu \eta_{2\alpha}\left(\mathcal{L}(pq) + p\frac{\zeta_\tau}{\zeta} q\right) dxd\theta
\leq \frac{C_0}{(\tau+1)^{5/2}} + C\frac{\alpha}{\tau + 1} V_\alpha.
\]
Returning to~\eqref{sep902}, we obtain the desired differential inequality
\[
	V_\alpha'
		= \left(  \aleph(2\alpha) - \frac{\omega'(\tau)}{1- \omega(\tau)} + C\frac{\alpha}{\tau+1}\right) V_\alpha  - 2 D_\alpha  + \frac{C_0}{(\tau+1)^{5/2}} .
\]

%To finish the proof of~\eqref{eq:V_alpha}, we need only establish that $\int_{\mathcal{C}_\tau} p dx d\theta \leq C_0$.  Arguing as in~\cite[Lemma~5.4]{HNRR12}, we may bound $p$ by the solution to~\eqref{e:p_equation}
%in the whole cylinder $\Rm\times \Theta$.  Thus, the heat kernel bounds of, e.g.~\cite{Norris}, imply that
%\begin{equation}\label{eq:weak_heat_kernel}
%	p(\tau, x+c^*\tau,\cdot)
%		\leq \frac{C_0 e^{- \frac{x^2}{C(\tau+1)}}}{\sqrt{\tau+1}},
%\end{equation}
%where $C_0$ is a constant depending on $p_0$.
%Plugging this into~\eqref{eq:I_alpha} gives us
%\[
%	\int_{C_\tau} e^{\alpha x} p dx d\theta
%		\leq C_0 \int_{C_\tau} \frac{e^{\alpha x - \frac{x^2}{C(\tau+1)}}}{(\tau+1)^{1/2}} dx d\theta
%		\leq \exp\left\{ \frac{C^2(\tau+1)\alpha^2}{4}\right\}.
%\]
%As $\tau \leq \alpha^{-2}$,~\eqref{eq:I_alpha} follows.

\subsubsection*{Proof of the 
inequality \eqref{eq:D_alpha} relating $V_\alpha$ and $D_\alpha$~ }

It is helpful to define
\[
\varphi(\tau, z,\theta) = e^{\alpha z_3} q(\tau, c^*\tau + |z|, \theta),
\]
with $(z_1,z_2,z_3) = z \in \R^3$,
and consider the following quantities
\begin{equation}\label{eq:nash_integrals}
\begin{split}
&\hat I_\alpha := \frac{1}{2\pi} \int_{\R^3\times \Theta} 
\varphi(\tau,z,\theta)dzd\theta
= \int_{\cC_\tau} \left( \frac{e^{\alpha (x-c^*\tau)} - e^{-\alpha (x-c^*\tau)}}{\alpha}\right) (x-c^*\tau) q(\tau, x,\theta) \mu \, dzd\theta,\\
&\hat V_\alpha := \frac{1}{2\pi} \int_{\R^3\times \Theta} 
\varphi(\tau,z,\theta)^2 dzd\theta
= \int_{\cC_\tau} \left( \frac{e^{2\alpha (x-c^*\tau)} 
- e^{-2\alpha (x-c^*\tau)}}{\alpha}\right) (x-c^*\tau) 
q^2(\tau,x,\theta) \mu \, dzd\theta,\\
&\hat D_\alpha := \frac{1}{2\pi} \int_{\R^3\times \Theta} 
|\nabla\varphi(\tau,z,\theta)|^2 dzd\theta\\
&\qquad=\int_{\cC_\tau} \left( \frac{e^{2\alpha (x-c^*\tau)} - 
e^{-2\alpha (x-c^*\tau)}}{2\alpha}\right) (x-c^*\tau) 
(\vert \nabla q (\tau,x,\theta)\vert^2 - \alpha^2 q(\tau,x,\theta)^2) 
\mu \, dzd\theta.
\end{split}
\end{equation}
They can be related by the following Nash-type inequality.
\begin{prop}\label{p:Nash}
Let $\Theta\subset \R^d$ be  a smooth, bounded domain,
and   $\Omega = \R^k \times \Theta$.
There exists a constant $C$, depending only on $d$, $k$, and $|\Theta|$ such that if $\phi$ is any function in $L^1(\Omega)\cap H^1(\Omega)$ satisfying Neumann boundary conditions on the boundary $\partial\Theta$, then
\begin{equation}\label{aug3150}
\Vert \nabla \phi \Vert_2^2
\geq C\Big(1+ \left( \frac{\Vert \phi \Vert_2}{\Vert \phi \Vert_1} \right)^\frac{2d(k+2)}{k(k+d)}\Big)^{-1}
{\Vert \phi \Vert_2^2 \left( \frac{\Vert \phi \Vert_2}{\Vert \phi \Vert_1} \right)^\frac{4}{k} }.
\end{equation}
\end{prop}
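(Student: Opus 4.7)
My approach is to split $\phi$ into its $\theta$-average and the orthogonal mean-zero remainder, apply a $k$-dimensional Nash inequality to the former and a $(k+d)$-dimensional Nash inequality to the latter, then verify by an elementary exponent computation that the combined bound dominates the stated one.

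Concretely, write $\phi(x,\theta) = \bar\phi(x) + \psi(x,\theta)$ with $\bar\phi(x) = |\Theta|^{-1}\int_\Theta \phi(x,\eta)\,d\eta$ and $\int_\Theta \psi(x,\cdot)\,d\theta = 0$ for every $x$. Because $\bar\phi$ is independent of $\theta$ and $\psi$ has zero $\theta$-mean, the decomposition is orthogonal in $L^2(\Omega)$, the cross term $\int\nabla_x\bar\phi\cdot\nabla_x\psi$ vanishes after integrating in $\theta$, and $\nabla_\theta\bar\phi\equiv 0$, giving
\[
\|\phi\|_2^2 = \|\bar\phi\|_{L^2(\Omega)}^2 + \|\psi\|_{L^2(\Omega)}^2, \qquad \|\nabla\phi\|_2^2 = \|\nabla_x\bar\phi\|_{L^2(\Omega)}^2 + \|\nabla\psi\|_{L^2(\Omega)}^2,
\]
together with $\|\bar\phi\|_{L^1(\Omega)} \leq \|\phi\|_{L^1(\Omega)}$ and $\|\psi\|_{L^1(\Omega)} \leq 2\|\phi\|_{L^1(\Omega)}$. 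Applying the classical $\R^k$ Nash inequality to $\bar\phi$ (with the $|\Theta|$-factors relating $L^p(\R^k)$ norms to $L^p(\Omega)$ norms) produces $\|\nabla_x\bar\phi\|_{L^2(\Omega)}^2 \geq c_1 \|\bar\phi\|_{L^2(\Omega)}^{2+4/k}\|\phi\|_{L^1(\Omega)}^{-4/k}$, with $c_1$ depending only on $k$ and $|\Theta|$. For $\psi$ I would invoke the Nash inequality with the full $(k+d)$-dimensional exponent on the cylinder for functions of zero $\theta$-mean,
\[
\|\psi\|_{L^2(\Omega)}^{2+4/(k+d)} \leq C \|\nabla\psi\|_{L^2(\Omega)}^2 \|\psi\|_{L^1(\Omega)}^{4/(k+d)}.
\]
This follows by the Carlen--Kusuoka--Stroock equivalence from the ultracontractive bound $\|e^{tL}\|_{L^1(\Omega)\to L^\infty(\Omega)} \lesssim t^{-(k+d)/2}$ restricted to the mean-zero-in-$\theta$ subspace, which is in turn the product of the $\R^k$-Gaussian bound $\lesssim t^{-k/2}$ with the Neumann heat-kernel bound $\lesssim t^{-d/2}$ on the mean-zero subspace of $L^2(\Theta)$ (the latter being valid for all $t>0$ thanks to the spectral gap $\mu_1>0$). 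Equivalently one can prove this Nash inequality directly via a joint frequency-space cutoff on $|\xi|^2+\mu_n \leq \Lambda$, combining Fourier in $x$ with Neumann-eigenfunction expansion in $\theta$.

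Setting $A=\|\bar\phi\|_2^2$, $B=\|\psi\|_2^2$, $M=A+B=\|\phi\|_2^2$, $N=\|\phi\|_1$, and $r=M^{1/2}/N$, the two Nash inequalities combine to give
\[
\|\nabla\phi\|_2^2 \geq c_1 M (A/M)^{1+2/k} r^{4/k} + c_2 M (B/M)^{1+2/(k+d)} r^{4/(k+d)}.
\]
Since $A/M + B/M = 1$, at least one of them is $\geq 1/2$, and in either case one reads off $\|\nabla\phi\|_2^2 \gtrsim M\min(r^{4/k}, r^{4/(k+d)})$. The elementary exponent identity
\[
\frac{4}{k+d} + \frac{2d(k+2)}{k(k+d)} - \frac{4}{k} = \frac{2d}{k+d} \geq 0
\]
then yields $\min(r^{4/k},r^{4/(k+d)})\bigl(1 + r^{2d(k+2)/(k(k+d))}\bigr) \gtrsim r^{4/k}$ for every $r>0$ (verified by splitting $r\leq 1$ and $r\geq 1$ and comparing exponents), which delivers the stated inequality up to absorbing constants.

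\paragraph{Main obstacle.} The delicate step is the mean-zero Nash inequality on the cylinder with the sharp $(k+d)$-dimensional exponent. A naive spectral argument using the Sobolev-type eigenfunction bound $\|e_n\|_\infty \leq C(1+\mu_n)^{d/4}$ loses a factor of $d/2$ in the effective dimension and produces only a $(k+2d)$-exponent; recovering the correct $(k+d)$ requires either the spectral-gap/ultracontractivity argument above or, equivalently, a sharp bound on the spectral function of the Neumann Laplacian on $\Theta$ consistent with Weyl's law.
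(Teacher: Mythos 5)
Your proof is correct, but it takes a genuinely different route from the paper. You split $\phi$ into its $\theta$-average $\bar\phi$ and the mean-zero-in-$\theta$ remainder $\psi$, apply the classical $k$-dimensional Nash inequality to $\bar\phi$ and a $(k+d)$-dimensional Nash inequality on the mean-zero subspace to $\psi$ (justified via ultracontractivity of the Neumann semigroup on that invariant subspace, where the spectral gap upgrades the small-time bound $t^{-d/2}$ to all times, so the rate $t^{-(k+d)/2}$ holds globally and the easy direction of the Carlen--Kusuoka--Stroock equivalence gives Nash), and then your case distinction $A/M\ge 1/2$ or $B/M\ge 1/2$ plus the exponent identity $\tfrac{4}{k+d}+\tfrac{2d(k+2)}{k(k+d)}-\tfrac{4}{k}=\tfrac{2d}{k+d}\ge 0$ correctly recovers \eqref{aug3150}; I checked the algebra and the reduction $\min(r^{4/k},r^{4/(k+d)})\bigl(1+r^{2d(k+2)/(k(k+d))}\bigr)\ge r^{4/k}$ for all $r>0$. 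The paper instead reduces to $\Theta=[0,1]^d$ by extension and scaling, expands in Fourier transform in $x$ and a cosine series in $\theta$, splits at a single frequency cutoff $\rho$ in both variables (the low-frequency block contributes $\rho^k(\rho+1)^d\|\phi\|_1^2$, where the ``$+1$'' coming from the $n=0$ mode is exactly what produces the correction factor $1+(\|\phi\|_2/\|\phi\|_1)^{2d(k+2)/(k(k+d))}$), optimizes $\rho=(J/I)^{1/(k+2)}$, and analyzes the resulting one-positive-root polynomial inequality $1\le X^{k+d}+\alpha X^k$. Your decomposition is conceptually cleaner -- it explains transparently why the average behaves $k$-dimensionally and the fluctuation $(k+d)$-dimensionally, and it avoids the polynomial-root bookkeeping -- at the price of importing the Neumann heat-kernel/spectral-gap input on $\Theta$ (or, in your alternative, a local Weyl-type bound on the spectral function), which you state but do not prove in detail; this is comparable in rigor to the paper's own unproved ``extend and scale'' reduction. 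One shared caveat: in both arguments the constant really depends on the geometry of $\Theta$ (through the extension operator in the paper, through the spectral gap and heat-kernel constants in yours), not only on $|\Theta|$ as stated, which is harmless for the application since $\Theta$ is fixed.
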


Inequality (\ref{aug3150})
is a multi-dimensional version of a Nash-type inequality in~\cite{FanKisRyz},
while the one-dimensional version of (\ref{aug3152}) is in~\cite{HNRR12}.
Its proof is in Section~\ref{sec:nash}.

%Due to~\eqref{eq:nash_integrals}, 
We may apply \Cref{p:Nash} to $\phi$ in the  
cylinder $\R^3 \times \Theta$ to obtain
\begin{equation}\label{eq:D_alpha_hat}
	\hat D_\alpha
		\geq \frac{1}{C}\frac{\hat V_\alpha^{5/3} \hat I_\alpha^{-4/3}}{1 + \hat V_\alpha^{5d/(3d + 9)}\hat I_{\alpha}^{-10d/(3d + 9)}}
%		\geq \frac{1}{C}\frac{ V_\alpha^{5/3}  \hat I_\alpha^{-4/3}}{1 +  V_\alpha^{5d/(3d + 9)} \hat I_{\alpha}^{-10d/(3d + 9)}}
\end{equation}
Using the bounds for $\zeta$ in \Cref{lem:weightzeta} 
and the exponential bounds \eqref{eq:exponential}
for $\eta_\alpha$, we see that
\begin{equation}\label{eq:integral_relations}
	C^{-1} \hat V_\alpha \leq V_\alpha \leq C \hat V_\alpha~~~
	\text{ and } ~~~\hat D_\alpha \leq CD_\alpha.
\end{equation}
%FInally, we point out that the right hand side of~\eqref{eq:D_alpha_hat} is monotonically decreasing in $\hat I_\alpha$ {\color{red} This only works for $d = 1,2$.......... }.
We claim that
\begin{equation}\label{eq:strong_I_alpha}
	\frac{1}{C_0}
		\leq \hat I_\alpha  \leq C_0
	~~~\text{ and }~~~
	V_\alpha  \leq C_0,
\end{equation}
so that (\ref{eq:D_alpha_hat}) implies
\[
D_\alpha \geq %\frac{1}{C_0}\frac{ V_\alpha^{5/3}  \hat I_\alpha^{-4/3}}{1 +  V_\alpha^{5d/(3d + 9)} \hat I_{\alpha}^{-10d/(3d + 9)}}.
\frac{1}{C_0}V_\alpha^{5/3},
\]
which is (\ref{eq:D_alpha}).

To finish, we need to show that~\eqref{eq:strong_I_alpha} holds.  
We begin with the inequality for $\hat I_\alpha$ in (\ref{eq:strong_I_alpha}). 
Let us introduce
\begin{equation}\label{sep2802}
I_\alpha  = (1 - \omega  ) \int_{\mathcal{C}_\tau} \mu(\theta) \eta_\alpha(\tau,x,\theta) p(\tau,x,\theta) dxd\theta.
\end{equation}
We note that 
\[
C^{-1} I_\alpha \leq \hat I_\alpha \leq C I_\alpha,
\]
by~\eqref{eq:exponential}.  Hence, we need only show that $I_\alpha$ is bounded 
away from infinity and zero uniformly in~$\tau$ and $\alpha$ for 
all $\tau \leq \alpha^{-2}$.  Let us differentiate $I_\alpha$:
\begin{equation*}
I_\alpha'  
=  - \frac{\omega' }{1-\omega } I_\alpha + (1 - \omega ) \int_{\mathcal{C}_\tau} \mu \left[ p \partial_\tau \eta_\alpha + 
\eta_\alpha p_\tau\right] dxd\theta.
\end{equation*}
%We now estimate the integral of the term on the far right 
Using~\eqref{e:p_equation} and (\ref{sep430}) allows us to 
rewrite the integral involving $p_\tau$:
%\begin{equation*}
%	\int_{\mathcal{C}_\tau} \nu \eta_\alpha \left( 1 - \omega \right) \partial_\tau p \, dxd\theta
%		= \ds \int_{\mathcal{C}_\tau} \nu \eta_\alpha \mathcal{L}(p) \, dxd\theta
%		 = \ds  \int_{\mathcal{C}_\tau} \mathcal{L}^* \left( \nu\eta_\alpha \right) p \, dxd\theta.
%\end{equation*}
%%Here we have used the fact that $\eta_\alpha(\tau, c^* \tau , \theta) = 0$ for all $\tau$ and for all $\theta$, so that all boundary terms vanish. 
%Inserting this into the equation for $I_\alpha'$, above, and using \Cref{lem:etaalpha} yields that
\begin{equation}\label{sep604}
%\begin{split}
I_\alpha'(\tau)
%= - \frac{\omega'(\tau)}{1-\omega(\tau)} I_\alpha(\tau) + \int_{\mathcal{C}_\tau} \left[ \nu \partial_\tau \eta_\alpha +  \mathcal{L}^* \left( \nu  \eta_\alpha \right) \right] p \, dxd\theta - \omega(\tau) \int_{\mathcal{C}_\tau} \nu p \partial_\tau \eta_\alpha \, dxd\theta \\
%&
=  \left(\aleph(\alpha) - 
\frac{\omega'(\tau)}{1-\omega(\tau)}\right) I_\alpha(\tau) - 
\omega(\tau) \int_{\mathcal{C}_\tau} \mu p \partial_\tau \eta_\alpha \, dxd\theta.
\end{equation}
%Fix $N\geq 1$ to be determined, and
The last term may be estimated as
\begin{equation}\label{eq:split_integral}
\begin{split}
\omega \left|\int_{\cC_\tau} \mu p \partial_\tau \eta_\alpha dxd\theta\right|
&\leq C\omega \int_{c^*\tau}^{c^*\tau + \alpha^{-1}} \int_\Theta p dx d\theta + 
C\omega \alpha \int_{c^*\tau + 
\alpha^{-1}}^\infty \int_\Theta p\eta_\alpha dx d\theta.%\\
%&\leq %\farc{C}{(\tau+1)} \int_0^\infty p_0 dxd\theta
%\farc{C_0}{(\tau+1)^{5/2}\alpha^2} %\int_0^\infty p_0 dxd\theta   
%+ \frac{C\alpha}{\tau+1} I_\alpha,
\end{split}
\end{equation}
%The second estimate comes from the following:  for $x \in [(N\alpha)^{-1},\alpha^{-1}]$, applying first~\eqref{sep434}, and then~\eqref{eq:exponential}, we have that
%\[
%	|\partial_\tau \eta_{\alpha}(x + c^*\tau)|
%		\leq C
%		\leq CN\alpha x
%		\leq CN\alpha \left(\frac{e^{\alpha x} - e^{-\alpha x}}{\alpha}\right)
%		\leq CN\alpha \eta_{\alpha}(x+c^*\tau),
%\]
%and, for $x\in [\alpha^{-1},\infty)$,~\eqref{sep434} gives $|\partial_\tau \eta_\alpha(x + c^*\tau)|\leq C \alpha \eta_\alpha(x+c^*\tau)$.
The second term in~\eqref{eq:split_integral} is   
bounded by $C(\tau+1)^{-1} \alpha I_\alpha$.  
The first requires a bit more work.  First, split the integral as
\begin{equation}\label{eq:sep901}
\int_{c^*\tau}^{c^*\tau + \alpha^{-1}} \int_\Theta p dx d\theta
= \int_{c^*\tau}^{c^*\tau + \min\{\tau^{2/3},\alpha^{-1}\}} 
\int_\Theta p dx d\theta
+ \int_{c^*\tau + \min\{\tau^{2/3},\alpha^{-1}\}}^{c^*\tau + \alpha^{-1}} 
\int_\Theta p dx d\theta.
\end{equation}
The first term is estimated using~\eqref{sep110} to obtain
\[
\int_{c^*\tau}^{c^*\tau + \min\{\tau^{2/3},\alpha^{-1}\}} \int_\Theta p dx d\theta
\leq \int_{0}^{\min\{\tau^{2/3},\alpha^{-1}\}} 
\int_\Theta \frac{C_0 x}{(\tau+1)^{3/2}} dx d\theta
\leq \frac{C_0}{(\tau+1)^{1/6}}.
\]
Arguing as in~\cite[Lemma~5.4]{HNRR12}, we may bound $p$ by the solution to~\eqref{e:p_equation}
in the whole cylinder $\Rm\times \Theta$.  Thus, the heat kernel bounds of, e.g.~\cite{Norris}, imply that
\begin{equation}\label{eq:weak_heat_kernel}
	p(\tau, x+c^*\tau,\cdot)
		\leq \frac{C_0 e^{- \frac{x^2}{C(\tau+1)}}}{\sqrt{\tau+1}},
\end{equation}
where $C_0$ is a constant depending on $p_0$.  Hence,
the second integral in~\eqref{eq:sep901} yields
\[
\int_{c^*\tau + \min\{\tau^{2/3},\alpha^{-1}\}}^{c^*\tau + \alpha^{-1}} 
\int_\Theta p dx d\theta
\leq C\int_{\min\{\tau^{2/3},\alpha^{-1}\}}^{\alpha^{-1}} 
\int_\Theta \frac{e^{- \frac{x^2}{C(\tau+1)}}}{\sqrt{\tau+1}} dx d\theta
\leq {C_0} e^{-(\tau+1)^{1/3}/C}.
\]
From~\eqref{sep702}, we see that $|\omega| \leq C(\tau + T)^{-1}$, with $T$ to be chosen.  This, along with the previous two inequalities and~\eqref{eq:split_integral}, implies that
\[
	\omega\left|\int_{\cC_\tau} \mu p \partial_\tau \eta_\alpha dxd\theta\right|
		\leq \frac{C_0}{(\tau+T)(\tau+1)^{1/6}}
		\leq \frac{C_0}{T^{1/12} (\tau+1)^{13/12}}.
\]
We used above the first
assumption on $\omega$ in (\ref{sep702}).
Hence, we obtain
\begin{equation}\label{sep806}
\left| I_\alpha' - \left(\aleph(\alpha) - \frac{\omega'}{1-\omega} + 
O(\alpha/(\tau+1))\right)I_\alpha\right| 
\leq \frac{C_0}{T^{1/12} (\tau+1)^{13/12}}.
\end{equation}
Integrating (\ref{sep806}), using, once again, (\ref{sep702}),
yields the inequality
\begin{equation}\label{sep810}
	C(\tau+1)^{C\alpha} e^{\aleph(\alpha)\tau} 
\left(I_\alpha(0) - \frac{C_0}{T^{1/12}}\right)
	\leq I_\alpha(\tau)
	\leq C(\tau+1)^{C\alpha} e^{\aleph(\alpha)\tau} 
\left(I_\alpha(0) + \frac{C_0}{T^{1/12}}\right).
\end{equation}
Using that $\tau \leq \alpha^{-2}$ and that $\aleph(\alpha) \sim \alpha^2$, 
by~\eqref{sep432}, we have that $\tau^{C\alpha}e^{\aleph(\alpha)\tau} \leq C$.  
Using this and choosing $T$ at least as large as $(2C_0 / I_\alpha(0))^{12}$ 
in~\eqref{sep810} finishes the proof of the first estimate 
in~\eqref{eq:strong_I_alpha}.  
We note that, for all $\alpha$, we have
\[
I_\alpha(0) \geq \int_{\mathcal{C}_\tau} x p_0(x,\theta) dxd\theta,
\]
so that our 
condition on $T$ can be made uniform~in~$\alpha$.

Now we consider $V_\alpha$.  Fix $N$ to be determined later and
assume that $\tau^{2/3}<N\alpha^{-1}$, the other case being treated
via a very similar computation. We decompose the integral as
\begin{equation}\label{eq:V_alpha_decomp}
V_\alpha(\tau)
= \int_{c^*\tau}^{c^*\tau + \tau^{2/3}}\int_\Theta \eta_{2\alpha} p q dxd\theta
+ \int_{c^*\tau + \tau^{2/3}}^{c^*\tau + 
N\alpha^{-1}}\int_\Theta \eta_{2\alpha} p q dx d\theta
+ \int_{c^*\tau + N\alpha^{-1}}^{\infty} \int_\Theta \eta_{2\alpha} p q dx d\theta.
\end{equation}
%Where we consider the second integral to be zero 
%whenever $\tau^{2/3} \geq \alpha^{-1}$.  
For the first integral, using \Cref{lem:weightzeta} and the definition of $q$, we 
may apply \eqref{sep110} to bound $p$ and~$q$ as $C(x-c^*\tau)(\tau+1)^{-3/2}$ 
and $C(\tau+1)^{-3/2}$, respectively.  Using also~\eqref{eq:exponential} to bound $\eta_{2\alpha}$ by a linear function:
\begin{equation}\label{sep2820}
\hbox{$\eta_{2\alpha}(\tau,x,\theta) \leq C e^{2\alpha N} (x-c^*\tau)$ 
on $[c^*\tau, c^*\tau + N\alpha^{-1}]$,}
\end{equation}
we get
\begin{equation}\label{eq:V_alpha_1}
\int_{c^*\tau}^{c^*\tau + \tau^{2/3}}\int_\Theta \eta_{2\alpha} p q dxd\theta
\leq \frac{Ce^{2\alpha N}}{(\tau+1)^3}
\int_{c^*\tau}^{c^*\tau + \tau^{2/3}} (x-c^*\tau)^2 dx
\leq \frac{C e^{2\alpha N}}{(\tau+1)}.
\end{equation}
For the second integral in (\ref{eq:V_alpha_decomp}), 
we use the same bounds for $q$ and $\eta_{2\alpha}$ but we bound $p$ with the 
Gaussian bound~\eqref{eq:weak_heat_kernel}.  This yields
\begin{equation}\label{eq:V_alpha_2}
\int_{c^*\tau + \tau^{2/3}}^{c^*\tau + N\alpha^{-1}}
\int_\Theta \eta_{2\alpha} p q dx d\theta
\leq \frac{C e^{2\alpha N}}{(\tau+1)^2}
\int_{\tau^{2/3}}^{N\alpha^{-1}}
  x e^{-\frac{x^2}{C(\tau+1)}} dx  
\leq \frac{C e^{2\alpha N}}{(\tau+1)} e^{- (\tau+1)^{1/3}/C}.
\end{equation}
For the last integral in (\ref{eq:V_alpha_decomp}), 
we use the bound $q$ and $p$ as in the last step and bound  
$\eta_{2\alpha}$ by~$C e^{2\alpha x} / \alpha$.  This yields
\begin{equation*}\label{eq:V_alpha_3}
\begin{split}
\int_{c^*\tau + N\alpha^{-1}}^{\infty} \int_\Theta \eta_{2\alpha} p q dx d\theta
&\leq \frac{C}{(\tau+1)^2} \int_{N\alpha^{-1}}^{\infty}  
\frac{ e^{2\alpha x - \frac{x^2}{C(\tau+1)}}}{\alpha} dx  .
%\\
		%&\leq \frac{C}{(\tau+1)^2} \int_{c^*\tau + \alpha^{-1}}^{\infty} \int_\Theta \frac{ e^{\alpha x - \frac{x}{C\alpha(\tau+1)}}}{\alpha} dx d\theta.
\end{split}
\end{equation*}
Since $\tau \leq \alpha^{-2}$, we may choose $N$ such that
\[
\farc{N}{C \alpha (\tau+1))} - 2\alpha \geq \farc{1}{\alpha(\tau+1)}
\]
for all $\alpha$ sufficiently small.  Hence, we have  
\[\begin{split}
\frac{C}{(\tau+1)^2} \int_{N\alpha^{-1}}^{\infty} 
  \frac{ e^{2\alpha x - \frac{x^2}{C(\tau+1)}}}{\alpha} dx  
&\leq \frac{C}{(\tau+1)^2} \int_{N\alpha^{-1}}^{\infty}   
\frac{ e^{2\alpha x - \frac{Nx}{C\alpha(\tau+1)}}}{\alpha} dx  \\
&\leq \frac{C}{(\tau+1)^2} \int_{N\alpha^{-1}}^{\infty}   
\frac{ e^{- 
\frac{x}{\alpha(\tau+1)}}}{\alpha} dx  
\leq \frac{C}{(\tau+1)}.
\end{split}\]
Combining this bound with~\eqref{eq:V_alpha_1} and~\eqref{eq:V_alpha_2}, we have that, for all $\tau \leq \alpha^{-2}$,
\[
V_\alpha(\tau) \leq C(\tau+1)^{-1},
\]
which, in particular, implies the upper bound on $V_\alpha$ in 
(\ref{eq:strong_I_alpha}).

\subsection{The proof of Proposition~\ref{p:Nash}}\label{sec:nash}

Here we prove the Nash-type inequality on cylinders that we use above.  We point out that when the $L^2$ norm is small relative to the $L^1$ norm, this yields the same inequality as in $\R^{k}$.  The main point here is that using this inequality we see that solutions to the heat equation on $\R^k\times \Theta$ decay at the same rate as solutions to the heat equation in $\R^k$.

Our approach is similar to the one used in~\cite{FanKisRyz}.  However, some computational challenges arise since we lack an explicit formula for the solutions of $k+1$ order polynomials.  We note that, by extending $\phi$ if necessary and scaling, we may assume without loss of generality that $\Theta = [0,1]^d$.

First, we represent $\phi$ in terms of its Fourier series in the $\theta$ variable, and its Fourier transform in the $x$ variable. This yields
\begin{equation*}
\phi(x,\theta) = \sum_{n \in \Z^d} \int_{\R^k} \hat\phi_n(\xi)  e^{i\xi\cdot x} \cos\left(\pi n \theta\right)\frac{d \xi}{(2\pi)^\frac{k+d}{2}}, 
\end{equation*}
where 
\begin{equation*}
\hat\phi_n(\xi) := \int_\Theta \int_{\R^k} \phi(x,\theta) e^{-i\xi \cdot x } \cos\left(\pi n \theta\right) \frac{dxd\theta}{(2\pi)^\frac{k+d}{2}}
\end{equation*}
Before we continue, we note two things.  First, we have that
\begin{equation}\label{eq:fourier_l1}
	|\hat\phi_n(\xi)| \leq \|\phi\|_{1},
\end{equation}
%and in the case when $\phi$ is odd in $x$, we have
%\begin{equation}\label{eq:fourier_l1_odd}
%	\hat\phi_n(0) = 0, ~~\text{ and }~~
%	|\partial_\xi \hat\phi_n(\xi)| \leq C\|x\phi\|_{1}.
%\end{equation}
%We point out that~\eqref{eq:fourier_l1_odd} implies that $|\hat\phi_n(\xi)| \leq C |\xi|\|x \phi\|_{1}$ for all $\xi$.
Second, the Plancherel formula tells that
\begin{equation}\label{eq:plancherel}
	\|\phi\|_2^2 = \sum_n \int_{\R^k} |\hat\phi_n(\xi)|^2 d\xi,
%\end{equation}
~~~\text{ and that }~~~  
%\begin{equation}\label{eq:plancherel_derivative}
	\|\nabla \phi\|_2^2 = \sum_n \int_{\R^k} \left(|\xi|^2 + n^2\right) |\hat\phi_n(\xi)|^2 \frac{d\xi}{(2\pi)^\frac{k+d}{2}}.
\end{equation}

Fix a constant $\rho$ to be determined later.  We now decompose $\|\phi\|_2$ into outer and inner parts as
\begin{equation}\label{eq:nash_split}
\begin{split}
	\|\phi\|_2^2
		&= \sum_{|n|\leq \rho} \int_{B_\rho(0)} |\hat \phi_n(\xi)|^2 d\xi
		+  \sum_{|n|> \rho} \int_{\R^d} |\hat \phi_n(\xi)|^2 d\xi
		+ \sum_{n} \int_{B_\rho(0)^c} |\hat \phi_n(\xi)|^2 d\xi.
\end{split}
\end{equation}
The first term in~\eqref{eq:nash_split} may be bounded % in two ways.  First:
as
\begin{equation}\label{eq:nash_first}
\begin{split}
	\sum_{|n|\leq \rho} \int_{B_\rho(0)} |\hat \phi_n(\xi)|^2 d\xi
		&\leq \sum_{|n|\leq \rho} \int_{B_\rho(0)} \|\phi_n\|_1^2 d\xi
		\leq C \rho^k(\rho + 1)^d\|\phi_n\|_1^2
\end{split}
\end{equation}
%Second, when $\phi$ is odd, we have
%\begin{equation}\label{eq:nash_first_odd}
%\begin{split}
%	\sum_{|n|\leq \rho} \int_{B_\rho(0)} |\hat \phi_n(\xi)|^2 d\xi
%		&\leq \sum_{|n|\leq \rho} \int_{B_\rho(0)} C|\xi|^2 \|x\phi_n\|_1^2 d\xi
%		\leq C \rho^3(\rho + 1)^d\|x\phi_n\|_1^2
%\end{split}
%\end{equation}
%Here $C$ is a universal constant depending only on the dimension and on $|\Theta|$. 
The second and third terms in~\eqref{eq:nash_split} may be estimated in the same way so we show only the second term.  It can be bounded as:
\begin{equation}\label{eq:nash_second}
	\begin{split}
		\sum_{|n|> \rho} \int_{\R^d} |\hat \phi_n(\xi)|^2 d\xi
			&\leq \sum_{|n|> \rho} \int_{\R^d} \frac{1}{\rho^2} \left(|\xi|^2 + n^2\right)|\hat \phi_n(\xi)|^2 d\xi %\\
			%&\leq \sum_{n} \int_{\R^d} \frac{1}{\rho^2} \left(|\xi|^2 + n^2\right)|\hat \phi_n(\xi)|^2 d\xi
			\leq \frac{1}{\rho^2} \|\nabla \phi\|_2^2.
	\end{split}
\end{equation}
Combining~\eqref{eq:nash_first} and~\eqref{eq:nash_second} with~\eqref{eq:nash_split}, we obtain
\[
	\frac{1}{C}\|\phi\|_2^2 \leq \rho^k(\rho + 1)^d\|\phi\|_1^2 + \frac{1}{\rho^2} \|\nabla\phi\|_2^2,
\]
%and, when $\phi$ is odd and $d=1$, combining \eqref{eq:nash_first_odd} and~\eqref{eq:nash_second} with~\eqref{eq:nash_split}, we obtain
%\[
%	\frac{1}{C}\|\phi\|_2^2 \leq \rho^3(\rho + 1)^k\|x\phi\|_1^2 + \frac{1}{\rho^2} \|\nabla\phi\|_2^2.
%\]

%\begin{equation*}
%\frac{1}{C}\Vert \phi \Vert_2^2 \leq \frac{\rho^d( \Theta \rho +1 )}{\Theta} \Vert \phi \Vert_1^2 + \frac{1}{\rho^2} \Vert \nabla\phi \Vert_2^2 
%\end{equation*}
In the interest of legibility, we define the following constants
\begin{equation*}
%\begin{split}
%	&\gamma = \begin{cases}
%		3, &\text{ when }\phi\text{ is odd and }d=1,\\
%		k, ~~&\text{ otherwise},
%		\end{cases}
	%\qquad
	I = %\begin{cases}
		%\|x\phi\|_1^2, &\text{ when }\phi\text{ is odd and }d=1,\\
		\Vert \phi \Vert_1^2, %~~&\text{ otherwise},
		%\end{cases}\\
	%&
	\qquad J = \Vert \nabla \phi \Vert_2^2,
		\qquad\text{ and } \qquad
	K = \Vert \phi \Vert_2^2,
%\end{split}
\end{equation*}
and we re-write the above inequality as
\[
	\frac{1}{C}K \leq \rho^k(\rho^d + 1) I + \frac{1}{\rho^2} J.
\]
Define $X$ to be the quantity 
\[
	X \stackrel{\text{def}}{=} J^{\frac{1}{k+2}} \left( \frac{C I^{\frac{2-d}{k+2}}}{K}\right)^{\frac{1}{k+d}},
\]
and choose
\begin{equation*}
	\rho = \left( \frac{J}{I} \right)^{\frac{1}{k+2}}
\end{equation*}
in order to optimize this inequality.  Hence, the above inequality becomes
%\begin{equation*}
%	1 \leq \frac{C}{K}J^{\frac{\gamma+d}{\gamma+2}} I^{\frac{2 - d}{\gamma+2}} + 2\frac{C}{K} J^{\frac{\gamma}{\gamma+2}} I^{\frac{2}{\gamma+2}}.
%\end{equation*}
%Define
%\[
%	X \stackrel{\text{def}}{=} J^{\frac{1}{\gamma+2}} \left( \frac{C I^{\frac{2-d}{\gamma+2}}}{K}\right)^{\frac{1}{\gamma+d}},
%\]
%and the above inequality becomes
\[
	1\leq
		X^{k+d} + \alpha X^k,
%	\frac{1}{J I^{1/(d+2)}} X^{-(d+1)}
%		\leq \frac{K}{C}X^{d+1} + 2\frac{I^{2/(d+2) - (d/(d+1)(d+2))} K^{d/(d+1)}}{C^{d/(d+1)} }X^d
\]
where we define
\[
	\alpha = C\left(\frac{I}{K}\right)^\frac{d}{k+d}.
\]
It is straight-forward to verify that this polynomial has exactly one positive root which must be at least as large as $(2(1 + \alpha))^{-1/k}$.  Hence, it follows that
\[
	X \geq \left(\frac{1}{2(1 + \alpha)}\right)^{1/k}
		\geq C\frac{1}{1 + \alpha^{1/k}}.
\]
Returning to our earlier notation, we obtain
%\[
%	J^\frac{1}{k+2} \frac{C I^\frac{2-d}{(k+2)(k + d)}}{K^{1/(k+d)}}
%		\geq \frac{1}
%				{1 + \frac{C I^\frac{d}
%					{k(k + d)}
%					}
%					{K^\frac{d}{k(k+d)}}}
%\]
%\[
%	J \left( \frac{C^{d+2} I}{K^{d+2}}\right)^{\frac{1}{d+1}}
%		\geq \frac{1}{\left(1 + \left(\frac{C I^{\frac{1}{d+1}}}{K^{\frac{1}{d+1}}} \right)^{1/d}\right)^{d+2}}
%\]
%\[
%	J
%		\geq \frac{\left( \frac{K^{d+2}}{C^{d+2} I}\right)^{\frac{1}{d+1}}}{\left(1 + \left(\frac{C I^{\frac{1}{d+1}}}{K^{\frac{1}{d+1}}} \right)^{1/d}\right)^{d+2}}
%\]
\[
	J
		\geq \frac{C K \left( \frac{I}{K}\right)^{\frac{d-2}{k+d}}}{1 + \left(\frac{I}{K} \right)^\frac{d(k+2)}{k(k+d)}},
\]
%where the $C$ is a new constant but it remains independent of $I$, $K$, and $J$.
Re-arranging this inequality and substituting in for $I$, $J$, and $K$ concludes the proof.

\bibliographystyle{abbrv}%{{{1
\bibliography{refs-log}
  %}}}1
\end{document}